\newcommand{\bbB}{\mathbb{B}}
\newcommand{\bbR}{\mathbb{R}}
\newcommand{\bbS}{\mathbb{S}}
\newcommand{\bbU}{\mathbb{U}}
\newcommand{\bbV}{\mathbb{V}}
\newcommand{\bbW}{\mathbb{W}}
\newcommand{\calB}{\mathcal{B}}
\newcommand{\calD}{\mathcal{D}}
\newcommand{\calI}{\mathcal{I}}
\newcommand{\calP}{\mathcal{P}}
\newcommand{\calT}{\mathcal{T}}
\newcommand{\calU}{\mathcal{U}}
\newcommand{\calJ}{\mathcal{J}}
\newcommand{\aev}{\env{a.e. }}
\newcommand{\bdy}{\partial}
\newcommand{\cone}[1]{{\mathcal C\del{#1}}}
\DeclareMathOperator{\divgText}{div}
\newcommand{\divg}[1]{\divgText}
\newcommand{\difdir}[1]{\langle #1 \rangle}
\newcommand{\env}[1]{{\textnormal{#1}}}
\renewcommand{\grad}{\nabla}
\newcommand{\Ltwo}[1]{{L^2\del{#1}}}
\newcommand{\Linf}[1]{{L^\infty\del{#1}}}
\newcommand{\Lone}[1]{{L^1\del{#1}}}
\newcommand{\lapl}{\Delta}
\newcommand{\nd}{{\partial_{\nu}}}
\newcommand{\normL}[3]{\norm{#1}_{L^{#2}\del{#3}}}
\newcommand{\normLt}[2]{\norm{#1}_\Ltwo{#2}}
\newcommand{\normLi}[2]{\norm{#1}_\Linf{#2}}
\newcommand{\normLo}[2]{\norm{#1}_\Lone{#2}}
\newcommand{\normS}[4]{\norm{#1}_\sob{#2}{#3}{#4}}
\newcommand{\normSZ}[4]{\abs{#1}_\sob{#2}{#3}{#4}}
\newcommand{\of}[1]{(#1)}
\newcommand{\pair}[1]{\left\langle #1 \right\rangle}
\newcommand{\pairLt}[2]{{\mathop{\pair{#1}}}_{\Ltwo{#2}\times\Ltwo{#2}}}
\newcommand{\pairSob}[5]{{\mathop{\pair{#1}}}_{\sob{-#3}{#2}{#5}\times\sobZ{#3}{#4}{#5}}}
\renewcommand{\restriction}{\vert}
\newcommand{\setdef}[2]{\cbr{#1\,:\, #2}}
\newcommand{\sob}[3]{{W^{#1}_{#2}\del{#3}}}
\newcommand{\sobZ}[3]{{\mathring{W}^{#1}_{#2}\del{#3}}}
\newcommand{\totalD}{\operatorname{d}\!}
\renewcommand{\vec}[1]{\bm{{#1}}}
\newtheorem{thm}[theorem]{Theorem}
\newtheorem{lem}[theorem]{Lemma}
\newtheorem{prop}[theorem]{Proposition}
\newtheorem{cor}[theorem]{Corollary}
\newtheorem{rem}[theorem]{Remark}
\newcommand{\BO}{\calB_\Omega}
\newcommand{\BG}{\calB_\Gamma}
\newcommand{\DO}{\calD_\Omega}
\newcommand{\costF}{\mathcal{J}\del{\gamma, y, u}}
\newcommand{\Omegag}{{\Omega_\gamma}}
\renewcommand{\restriction}{\vert}
\newcommand{\yop}{\bar{y}}
\newcommand{\uop}{\bar{u}}
\newcommand{\gop}{\bar{\gamma}}
\newcommand{\gtl}{\widetilde{\gamma}}
\newcommand{\ytl}{\widetilde{y}}
\newcommand{\rop}{\bar{r}}
\newcommand{\sop}{\bar{s}}
\newcommand{\Yop}{\bar{Y}}
\newcommand{\Gop}{\bar{G}}
\newcommand{\Uop}{\bar{U}}
\newcommand{\Rop}{\bar{R}}
\newcommand{\Sop}{\bar{S}}
\newcommand{\Uad}{{\mathcal{U}_{ad}}}
\newcommand{\HA}[1]{\textcolor{black}{#1}}
\newcommand{\RHN}[1]{\textcolor{black}{#1}}
\title{Optimal Control of a  Free Boundary Problem
  with Surface Tension Effects: A Priori Error Analysis
\thanks{This research was supported in part by NSF grants DMS-0807811 and DMS-1109325.}
}
\author{Harbir Antil\thanks{Department of Mathematical Sciences. George Mason University, Fairfax, VA 22030, USA ({\tt hantil@gmu.edu})}. 
\and Ricardo H. Nochetto\thanks{Department of Mathematics and Institute for Physical Science and Technology, University of Maryland
College Park, MD 20742, USA ({\tt rhn@math.umd.edu})}. 
\and Patrick Sodr{\'e}\thanks{Department of Mathematics, 
University of Maryland College Park, MD 20742, USA ({\tt sodre@math.umd.edu})}.}
\begin{document}
\maketitle
\begin{abstract}
We present a finite element method along with its analysis for the
optimal control of a model free boundary 
problem with surface tension effects, formulated and studied in 
\cite{HAntil_RHNochetto_PSodre_2014a}. The state system couples
the Laplace equation in the bulk with the Young-Laplace equation on the free boundary to
account for surface tension. We first prove that the state and adjoint system
have the requisite regularity for the error analysis (strong solutions). 
We discretize the state, adjoint and control variables via
piecewise linear finite elements and show optimal $O(h)$ error
estimates for all variables, including the control. This entails using
the second order sufficient optimality conditions of 
\cite{HAntil_RHNochetto_PSodre_2014a}, and the first order necessary optimality
conditions for both the continuous and discrete systems.
We conclude with two numerical examples which examine the various error estimates.
\end{abstract}

\begin{keywords} sharp interface model, free boundary, curvature, 
                                surface tension, pde constrained optimization, boundary control, finite element method, L2 projection, second order sufficient conditions, a priori error estimate.                                                     
\end{keywords}
\begin{AMS}49J20, 35Q93, 35Q35, 35R35, 65N30. \end{AMS}

\section{Introduction} \label{s:intro}

\HA{
Two-phase fluids are ubiquitous in nature, yet they represent a
formidable modeling, analytical, and computational challenge. 
{\it Ferrofluids} are roughly a large number of tiny permanent magnets 
floating in a liquid or carrier (usually some kind of oil). They can
be actuated, and so controlled, by means of external magnetic fields, 
which gives rise to many applications such as
instrumentation, vacuum technology, lubrication,
vibration damping, radar absorbing materials, magnetic manipulation of
microchannel flows, nanomotors, and nanopumps
\cite{MZahn_2001a,HHartshorne_JBackhouse_WLLee_2004a,KJVinoy_1996a, MMiwa_HHarita_TNishigami_RKaneko_2003a}.
If $(\vec u,p)$ are the velocity-pressure pair and $(\vec H,\vec B)$ denote the
magnetic field and magnetic induction, then the fluid stress tensor
reads \cite[Eq. (6)]{OLavrova_LTobiska_2006a}
\[
\vec \sigma_{ij}(\vec u,p, \vec H) = 2\eta \vec \varepsilon(\vec
u)_{ij} - \del{p+\frac{\mu_0}{2} |\vec  H|^2} \delta_{ij} + \vec B_i
\vec H_j
\quad i,j = 1, \dots, d,
\]
where 
$\vec \varepsilon(\vec u)_{ij} = \frac{1}{2} \del{\frac{\partial
    u_i}{\partial x_j} + \frac{\partial u_j}{\partial x_i} }$ is the
symmetric gradient, $\eta$ is the viscosity parameter and $\mu_0$ is 
the permeability constant. The interface condition on the free boundary
$\gamma$ separating the two fluids, say ferrofluid and air, reduces to a
balance of forces: the jump of the normal stress equals the surface
tension force \cite[Eq. (7)]{OLavrova_LTobiska_2006a}
\begin{equation}\label{balance-of-forces}
[\vec \sigma(\vec u, p, \vec H)] \vec \nu
= \kappa \mathcal{H} \vec \nu,
\end{equation}
where $\vec \nu$ is the unit normal vector to $\gamma$, 
$\mathcal{H}$ is the sum of principal curvatures of $\gamma$ and
$\kappa > 0$ is a surface tension coefficient. 
}

\HA{
{\it Electrowetting} on dielectric refers to the local modification of the surface
tension between two immiscible fluids via electric actuation \cite{FMugele_JCBaret_2005a,FMugele_MDuits_DVandenEnde_2010a,SWalker_ABonito_RNochetto_2010a}. This allows for change
of shape and wetting behavior of a two-fluid system and, thus, for its manipulation
and control. The existence of such a phenomenon was originally discovered by Lippmann
more than a century ago \cite{GLippmann_1875a}, but electrowetting 
has found recently a wide spectrum of applications, specially in the realm of
micro-fluidics (reprogrammable lab-on-chip
systems, auto-focus cell phone lenses, colored oil pixels and video speed smart
paper \cite{FMugele_JCBaret_2005a,JHeikenfeld_2011a,BBerge_2000a,RBFair_2007a}). The interface condition is somewhat similar to 
\eqref{balance-of-forces} with $\vec H$ replaced by the electric field 
$\vec E$; we refer to \cite[eq (2.13)]{RHNochetto_AJSalgado_SWWalker_2011a} for a
diffuse interface model (see also \cite{RHNochetto_AJSalgado_SWWalker_2011a,HAbels_HGarcke_GGruen_2010a,HLu_KGlasner_ABertozzi_CKim_2007a, TQian_XPWang_PSheng_2003a, TQian_XPWang_PSheng_2006a}). 
If the device is confined to be within a Hele-Shaw cell,
then dimension reduction of the Navier-Stokes equations leads to a
harmonic pressure $p$ inside the droplet with boundary condition
 \cite[Eq. (2)]{SWalker_ABonito_RNochetto_2010a}
\[
p = \kappa \mathcal{H} + e + \lambda.
\]
This again exhibits the surface tension effect as well as the electric forcing
$e$, which corresponds to a change of contact angle due to varying
voltage, and the contact line pinning effect 
$\lambda \in \lambda_0\textrm{sign} (\vec u\cdot\vec \nu)$ with
$\lambda_0 \ge 0$ constant.
}

\HA{
The preceding examples are free boundary problems (FBPs) with an
interface condition containing both the mean curvature $\mathcal{H}$
and a forcing function (or control) which can be actuated on to drive
the system. There is some supporting theory for these FBPs
\cite{OLavrova_LTobiska_2006a,BJJin_MPadula_2004a, BJJin_2005a, MPadula_VASolonnikov_2010, HBae_2011a}, but
unfortunately formulated in H\"older spaces rather than low-order
Sobolev spaces. The latter are essential for a variational approach.
}

\HA{
In \cite{HAntil_RHNochetto_PSodre_2014a} we explore the variational 
formulation of a simplified model FBP in graph form, formulated by Saavedra and
Scott \cite{PSaavedra_RScott_1991}, which still exhibits some of
the key difficulties of a fluid flow FBP but a simpler and tractable PDE structure.
The state variables $(y,\gamma)$ satisfy the Laplace equation in the
bulk $\Omega_\gamma$ and the Young-Laplace equation on the free
boundary $\Gamma_\gamma$ modified by an additive control $u$. 
}

\HA{
\vskip-0.3cm
\begin{figure}[H] \label{f:domain}
\centering
\includegraphics[width=0.3\textwidth,height=0.25\textwidth]{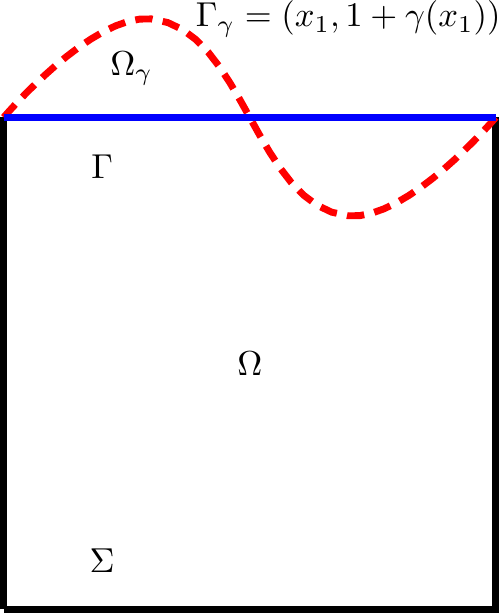}
\caption{\RHN{$\Omega_{\gamma}$ is the physical domain with boundary 
               $\partial \Omega_{\gamma} = \Sigma \cup \Gamma_{\gamma}$. Here $\Sigma$ includes the lateral and the 
               bottom boundary and is assumed to be fixed. Moreover, the top boundary $\Gamma_{\gamma}$ (dotted line) 
               is ``free" and is assumed to be a graph of the form $(x_1,1+\gamma(x_1))$, where $\gamma \in \sobZ{1}{\infty}{0,1}$ 
               denotes a parametrization. The free boundary $\Gamma_{\gamma}$ is further mapped to a fixed boundary $\Gamma = (0,1) \times \{ 1 \}$ and $\Omega_{\gamma}$ is in turn mapped to a reference domain $\Omega = (0,1)^2$, where all 
               computations are carried out.}}
\vskip-0.3cm
\end{figure}
The geometric configuration is depicted in \figref{f:domain}.
Let $\gamma \in \sobZ{1}{\infty}{0,1}$ denote a parametrization of the
top boundary of the physical domain 
$\Omega_{\gamma} \subset \Omega^* \subset \mathbb{R}^2$ with boundary 
$\partial \Omega_{\gamma} := \Gamma_{\gamma} \cup \Sigma$, defined as 
\begin{subequations}
	\begin{align*}
 		\Omega^* &:= (0,1) \times (0,2), \\
 		\Omegag &:= \setdef{(x_1,x_2)}{0 < x_1 < 1,\;0< x_2 < 1+ \gamma(x_1)}, \\
 		\Gamma_{\gamma} &:= \setdef{(x_1,x_2)}{ 0 < x_1 < 1,\; x_2=1+\gamma(x_1)}, \\ 
 		\Sigma &:= \bdy\Omega_{\gamma} \setminus \Gamma_{\gamma}, \\
		 \Gamma &:= \setdef{(x_1,x_2)}{ 0 < x_1 < 1,\; x_2=1} .
	\end{align*}
\end{subequations}
Here, $\Omega^*$ and $\Sigma$ are fixed while $\Omega_{\gamma}$ and $\Gamma_{\gamma}$ deform
according to $\gamma$. 
}

\HA{
We want to
find an optimal control $u \in \Uad \subset \Ltwo{0,1}$ so that the 
the state variables $(y,\gamma)$ are
 the best least squares fit of desired boundary $\gamma_d: \del{0,1} \to \mathbb{R}$ and 
bulk $y_d: \Omega^* \to \mathbb{R}$ configurations. This amounts to
solving the problem: minimize
}
\begin{subequations}\label{eq:ocfbp_1}
	\begin{equation}\label{eq:cost_fcn}
	     \costF := \frac{1}{2}\normLt{\gamma - \gamma_d}{0,1}^2  
				+ \frac{\HA{\mu}}{2}\normLt{y - y_d}{\Omegag}^2 
				+ \frac{\lambda}{2}\normLt{u}{0,1}^2,
	\end{equation}	
	subject to the state equations 
	\begin{equation}\label{eq:state_eqn}\left\{
		\begin{aligned} 
			-\lapl y &= 0 && \env{in }\Omegag \\ 
	    		y &= v  && \env{on } \bdy\Omegag \\
	    	-\kappa \mathcal H\sbr{\gamma} 
	    	+ \nd y\del{\cdot,1+\gamma}  &=  u
	    		&& \env{on } \del{0,1} \\
	    	\gamma(0) = \gamma(1) &= 0,
		\end{aligned}
	\right.\end{equation}
	the state constraints
	\begin{align}\label{eq:state_const_a}
		 \abs{\totalD_{x_1}\gamma} \leq 1 \quad \aev \env{in } \del{0,1},
	\end{align}
	with $\totalD_{x_1}$ being the total derivative with respect to $x_1$, 
	and the control constraint 
	\begin{align}\label{eq:control_const_a}
		u &\in \Uad
	\end{align}
	dictated by $\Uad$, a closed ball in $\Ltwo{0,1}$, to be
        specified later in \eqref{eq:v_u_const_set}. 
\end{subequations}
Here $\lambda > 0$ is the stabilization parameter; \HA{$\mu \ge 0$};  
$v$ is given data which in principle 
could act as a Dirichlet boundary control; 
$$
\mathcal H\sbr{\gamma} := \totalD_{x_1}\del{\frac{\totalD_{x_1}\gamma}{\sqrt{1+\abs{\totalD_{x_1}\gamma}^2}}}
$$ 
is the \emph{curvature} of $\gamma$; and $\kappa > 0$ plays the role of surface tension coefficient. 

\HA{
We remark that it is customary to take $u \in \Uad$, as the amount of control in practice is 
limited. On the other hand it may happen that several $u \in \Uad$ realize optimal state $y$;
the $\lambda$ term in cost \eqref{eq:cost_fcn} helps to realize one of these controls with 
the smallest norm. Mathematically speaking, the choice of $\lambda$ term in \eqref{eq:cost_fcn} ensures the second order sufficient condition \cite[Corollary 5.8]{HAntil_RHNochetto_PSodre_2014a} and thus the local uniqueness of the optimal control. This further leads to optimal a priori error estimates for the control in \thmref{eq:error_est_control}. The control constraint $u \in \Uad$ is essential to have the well-posedness of the state system \eqref{eq:state_eqn}, see \cite[Theorem 4.5]{HAntil_RHNochetto_PSodre_2014a}. This has further repercussions and provides a well-defined control-to-state map \cite[Eq. (4.1)]{HAntil_RHNochetto_PSodre_2014a} and a reduced functional \cite[Section 5.1]{HAntil_RHNochetto_PSodre_2014a}. The reduced functional approach is a standard technique to solve the optimal control problems \cite{FTroltzsch_2010a, MDGunzburger_2003a}.
}

We use a fixed domain approach to solve the optimal control free boundary problem (OC-FBP). In fact, we transform $\Omega_\gamma$ to $\Omega=(0,1)^2$ and $\Gamma_\gamma$ to $\Gamma=(0,1)\times\set{1}$ (see Figure~\ref{f:domain}), at the expense of having a governing PDE with rough coefficients. 
One of the challenges of an OC-FBP is dealing with 
state constraints which may allow or prevent 
topological changes of the domain. Our analysis 
in \cite{HAntil_RHNochetto_PSodre_2014a} yields the
control constraint \eqref{eq:control_const_a},
which always enforce the state constraint \eqref{eq:state_const_a}
i.e., 
we can treat OC-FBP as a simpler \emph{control constrained}
problem. Moreover, we proved novel second
order sufficient conditions for the optimal control problem for small
data $v$. As a consequence we obtained that 
the above minimization has a (locally) unique solution.

\HA{
In this paper we continue our investigation of \cite{HAntil_RHNochetto_PSodre_2014a}.
We introduce a} fully discrete optimization problem, 
using piecewise linear finite elements, and show that it converges with an 
optimal rate $O(h)$ for all variables. In fact, 
the convergence analysis for the control requires a-priori error estimates
for both the state and the adjoint equations. The state equations
error estimates were developed by P. Saavedra and R. Scott in
\cite{PSaavedra_RScott_1991} under the assumption that the continuous
state equations have suitable second order regularity (strong
Sobolev solutions). Our first goal is to prove such a regularity for 
$v \in W^2_p(\Omega), p>2,$ via a fixed-point argument, as well as
to extend the analysis to the continuous adjoint
equations. Our analysis of strong solutions for both the
state and adjoint equations is novel in Sobolev spaces but not in 
H\"older spaces \cite{VASolonnikov_1982a}.
We exploit this second order regularity to derive a-priori error
estimates for the state and adjoint variables based on
\cite{PSaavedra_RScott_1991}; the former are a direct extension of
\cite{PSaavedra_RScott_1991} whereas the latter are new.
An important difference with \cite{PSaavedra_RScott_1991} is the
presence of the control variable $u$, for which we obtain also
a novel a-priori error estimate.

There are two approaches for dealing with a discrete optimal
control problem with PDE constraints. Both rely on an agnostic
discretization of the state and adjoint equations, perhaps by
  the finite element method; they differ on whether or not the admissible set of controls is discretized as well. 
The first approach \cite{NArada_ECasas_FTroeltzsch_2002a, ECasas_MMateos_FTroeltzsch_2005a, ARoesch_2006a} follows a more physically appealing idea and also discretizes the admissible control set.
The second approach \cite{MHinze_2005a} induces a discretization of the optimal control by projecting the discrete adjoint state into the admissible control set. 
From an implementation perspective this projection may lead to a
control which is not discrete in the current mesh and thus requires
an independent mesh. Its key advantage
is obtaining an optimal quadratic rate of convergence \cite[Theorem
2.4]{MHinze_2005a} for the control. We point that
no such improvements can be inferred for our problem. 
This is due to the highly nonlinear nature of the state equations and 
the necessity to discretize the (rough) coefficients. 
We will provide more details on this topic in Section \ref{s:control_numerics} below.

We follow the first approach and discretize the entire optimization problem
using piecewise linear finite elements. However, we exploit the
structure of the admissible control set and show optimal
convergence rates for the state, adjoint, and control variables.
We largely base our error analysis
of the state and adjoint equations on the work by P. Saavedra and
L. R. Scott \cite{PSaavedra_RScott_1991}. 
For analyzing the discrete control we use the second order sufficient
condition we developed in \cite[Theorem
5.6]{HAntil_RHNochetto_PSodre_2014a} together with the first order
necessary conditions for the continuous and discrete systems.

To prove well-posedness of the discrete state and adjoint systems we rely on the inf-sup theory and a fixed point argument. Therefore, the smallness assumption on the data $v$ is still required as in the continuous case \cite[Theorem 4.5]{HAntil_RHNochetto_PSodre_2014a}. 
However, with the aid of simulations we are able to explore the
control problem beyond theory and test it for large data.

The outline of this paper is as follows: In Section \ref{s:oc_fbp}, we state the variational form for the OC-FBP. We summarize the first order necessary and 
second order sufficient conditions in Sections \ref{s:oc_first} and \ref{s:oc_second}.
For boundary data in $W^2_p$, $p>2$, we show that the state and the adjoint systems
have strong solutions in Section \ref{s:high_regularity}. We introduce a finite element
discretization of the system in Section \ref{s:oc_disc_fbp} and 
prove error estimates in $W^1_p\times W^1_\infty$ for the state
variables and in $W^1_q\times W^1_1$ for the adjoint variables. 
We derive an $L^2$-error estimate for the optimal control in
Section \ref{s:control_numerics}. We conclude with two numerical examples in 
Section \ref{s:oc_numerics_fbp} which confirm our theoretical findings:
the first example explores the unconstrained problem, whereas
the second example deals with the constrained one.

\section{Continuous Optimal Control Problem}\label{s:summary}
The purpose of this section is to recall the continuous optimal
control problem in its variational form 
along with its first and second optimality conditions derived in
\cite{HAntil_RHNochetto_PSodre_2014a}.
We denote by $\lesssim$ the inequality $\le C$ with a constant independent of the
quantities of interest.

\subsection{Problem Formulation} \label{s:oc_fbp}

We choose to present the formulation directly in its variational form
on the reference domain $\Omega$ after having linearized the curvature
$\mathcal H$, and scaled the control $u$. These assumptions, not
  being crucial \cite[Section~2, ${\bf A}_{1}$-${\bf
    A}_2$]{HAntil_RHNochetto_PSodre_2014a}, result in an optimal
control problem subject to a nonlinear PDE constraint with (rough)
coefficients depending on $\gamma$ but without an explicit interface. 
We denote by $\BG:\sobZ 1 \infty {0,1} \times \sobZ 1 1 {0,1} \to\bbR$
and  $\BO:\sobZ1p\Omega\times\sobZ1q\Omega\to\bbR$ the bilinear forms 
	\begin{equation}\begin{aligned} \label{eq:bilinear_form}
    		\BG\sbr{\gamma,\xi} &:= \kappa \int_{0}^1 {\totalD_{x_1}\gamma}(x_1) {\totalD_{x_1}\xi}(x_1) \dif x_1, \\
    		\BO\sbr{y,z;A\sbr{\gamma}} &:= \int_\Omega A\sbr{\gamma} \grad y \cdot \grad z \dif x,
	\end{aligned}\end{equation}
with surface tension constant $\kappa$. The coefficient matrix
$A\sbr{\gamma}$ arises from mapping the physical domain $\Omegag$ to
the reference domain $\Omega$ and is given by 
\cite[Section 2]{HAntil_RHNochetto_PSodre_2014a}
	\begin{align} \label{eq:op_A}
		A\sbr{\gamma} &
					  = \begin{bmatrix} 1+\gamma(x_1) & -\totalD_{x_1}\gamma(x_1)x_2 \\	
		  			  -\totalD_{x_1}\gamma(x_1)x_2 & \frac{1+		\del{\totalD_{x_1}\gamma(x_1)x_2}^2}{1+\gamma(x_1)} \end{bmatrix} .
	\end{align} 
Let $E:\sobZ{1}{1}{0,1} \rightarrow \sob{1}{q}{\Omega}$, $1 < q < 2$, be a
continuous linear extension operator, namely,
	\begin{align} \label{eq:w1_wq_ext}
  		E \xi \restriction_{\Gamma} = \xi,\quad
  		E\xi \restriction_{\Sigma} = 0, \quad
  		\abs{E\xi}_{\sob{1}{q}{\Omega}} \le C_E \abs{\xi}_{\sob{1}{1}{0,1}},
	\end{align}
where $C_E$ is the stability constant. It is convenient to
introduce the product space:
\begin{equation}\label{eq:product_space1}
    \bbW^{1,1}_{t,s} := \sobZ 1 t{0,1} \times \sobZ 1 s{\Omega} \quad
    1\le t,s \le \infty.
\end{equation}

Given $v \in \sob{1}{p}\Omega$, a lifting of the boundary data to
$\Omega$, $y_d \in L^2(\Omega^*)$, $\gamma_d \in L^2(0,1)$,
let $\delta y :=  y + v - y_d$, $\delta \gamma := \gamma - \gamma_d$. 
The optimal control problem is to minimize \looseness=-1
\begin{subequations}\label{eq:ocfbp}
        \begin{equation}\label{eq:lin_cost}
			 \costF := \frac{1}{2}\normLt{\delta\gamma}{0,1}^2 
                         + \frac{\HA{\mu}}{2}\normLt{\delta y\sqrt{1+\gamma}}{\Omega}^2 
			 + \frac{\lambda}{2}\normLt{u}{0,1}^2 ,
		\end{equation}
with state variable $\del{\gamma,y}\in \bbW^{1,1}_{\infty,p}$,
$p > 2$ satisfying the state equations
    	\begin{equation}\label{eq:lin_st_const}
			\BG\sbr{\gamma, \xi} + \BO\sbr{y + v, z + E\xi; A\sbr{\gamma}} 
				= \pairSob{u,\xi}{\infty}{1}{1}{0,1}  \quad \forall \del{\xi, z} \in \bbW^{1,1}_{1,q}
		,\end{equation}
the state constraint
    \begin{align}\label{eq:state_const}
		&\abs{\totalD_{x_1}\gamma(x_1)} \leq 1 \quad \aev x_1 \in \del{0,1},
	\end{align}   
with $\totalD_{x_1}$ being the total derivative with respect to $x_1$, 
	and the control constraint
    \begin{align}\label{eq:control_const} 
		u &\in \Uad.
	\end{align}
\end{subequations}
The set  $\Uad$ of {\it admissible controls} is a closed ball in
  $\Ltwo{0,1}$ defined as follows: if $\theta_1 \in (0,1)$ is chosen as in \cite[Lemma 4.3]{HAntil_RHNochetto_PSodre_2014a}, and $\alpha$ is the inf-sup constant for $\BG$ 
\cite[Proposition 4.1]{HAntil_RHNochetto_PSodre_2014a}, then
\begin{align}    \label{eq:v_u_const_set}
\Uad &:= \set{ u \in \Ltwo{0,1}\;:\; \normLt{u}{0,1} \leq \theta_1 /2\alpha}.
\end{align}
We also need the open ball $\calU := \set{u \in L^2(0,1) : \norm{u}_{L^2(0,1)} < \theta_1/\alpha}$, so that $\Uad \subset \calU$. 
In view of the regularity of $u$ and $\xi$, the duality pairing
$\pairSob{u,\xi}{\infty}{1}{1}{0,1}$ reduces to $\int_0^1 u \zeta$. We
refer to \cite[Equations 4.5-4.6]{HAntil_RHNochetto_PSodre_2014a} for
details. Since $\Uad$ is not open, we need to define a proper set of admissible directions to compute derivatives 
with respect to $u$.
Given $u \in \Uad$, the convex cone $\cone{u}$ comprises all
directions $h \in \Ltwo{{0,1}}$
such that $u+th \in \Uad$, $t>0$, i.e.,
	\[
    	\cone{u} := \set{h \in \Ltwo{{0,1}}:\; u+th \in \mathcal
          U_{ad}, t>0 }.
	\]   

The proof of existence and local uniqueness of a minimizer of \eqref{eq:ocfbp} involves multiple steps. The first step \cite[Subsection 4.1.1]{HAntil_RHNochetto_PSodre_2014a} is to impose a smallness condition on the data $v$ and restrict the radius of the $\Ltwo{0,1}$ ball $\Uad$ to solve the nonlinear system \eqref{eq:lin_st_const}-\eqref{eq:state_const}. The second step \cite[Subsection 4.1.2]{HAntil_RHNochetto_PSodre_2014a} is to improve the regularity of $\gamma$ from Lipschitz continuous to the fractional Sobolev space $\sob{2-1/p}{p}{0,1}$. This additional regularity is in turn used to prove the existence of a minimizer in \cite[Subsection 5.1]{HAntil_RHNochetto_PSodre_2014a}. The last two steps \cite[Subsection 5.2 and 5.3]{HAntil_RHNochetto_PSodre_2014a} consist of computing the first and second order optimality conditions.
 
The optimality conditions are essential tools for this paper. From the
simulation perspective, the first order condition yields a way to
compute a minimizing sequence. From a numerical analysis perspective,
the second order condition is the starting point for proving an
a-priori error estimate. We recall now these conditions and
prove a new result, \lemref{L:Jacobian-state}, which is instrumental for
implementing the adjoint system.

\subsection{First-order Optimality Condition} \label{s:oc_first}
The purpose of this section is to state the first order necessary optimality conditions using the reduced cost functional approach; a formal Lagrange multiplier approach is also presented in \cite[Section 3]{HAntil_RHNochetto_PSodre_2014a}. 

Given $u \in \calU$, there exists a unique solution $(\gamma,y) \in \bbW^{1,1}_{\infty,p}$ of \eqref{eq:lin_st_const}-\eqref{eq:state_const}. This induces the so-called \emph{control-to-state} map $G_{v} : \calU \rightarrow \bbW^{1,1}_{\infty,p}$, where $G_v(u) = \del{\gamma(u),y(u)}$, and we can write the cost functional $\calJ$ in \eqref{eq:lin_cost} in the reduced form as 
\begin{equation}\label{eq:reduced_cost}
    \calJ(u) := \calJ_1(G_v(u)) + \calJ_2(u)
\end{equation}
with
\[
    \calJ_1(\gamma,y) := \frac{1}{2}\norm{\delta\gamma}_{L^2(0,1)}^2 + \frac{\HA{\mu}}{2}\norm{\delta y\sqrt{1+\gamma}}^2_{L^2(\Omega)}, \quad
    \calJ_2(u) = \frac{\lambda}{2}\norm{u}_{L^2(0,1)}^2. 
\]    
Therefore, if $\uop \in \Uad$ is a minimizer of $\calJ$, then
  $\uop$ satisfies the variational inequality
	\begin{align}  \label{eq:var_ineq}
       \pairLt{\mathcal{J}'(\uop), u - \uop}{0,1}  &\ge 0  \quad  \forall u \in 		\Uad.
	\end{align}

Deriving an expression for $\calJ'(\uop)$ is one of the main
difficulties of an optimization problem. It turns out that
$\mathcal{J}'(\uop) = \lambda \uop + \sop$
\cite[Sections 3 and 5.2]{HAntil_RHNochetto_PSodre_2014a}, whence
	\begin{align} \label{eq:lin_uop_var_ineq}
   		\pairLt{\lambda \uop + \sop, u-\uop }{0,1} \geq 0 \qquad \forall u \in \mathcal{U}_{ad},
	\end{align}  
where $\del{\sop, \rop-E\sop}\in\bbW^{1,1}_{1,q}$ and $(\sop, \rop)$ satisfies
  the \emph{adjoint equations} in variational form
\begin{equation}\label{eq:lin_rop_sop} 	
\begin{aligned}
		\BG\sbr{\xi, \sop} + \DO\sbr{(\xi, z), \rop ; \gop, \yop} 
                & = \pair{\calJ_1'(\gop,\yop),(\xi,z)} \\
                & = \pair{\xi, \delta\gop
		+ \frac{\HA{\mu}}{2} \int_0^1 \abs{\delta\yop}^2 \dif x_2 }
                + \pair{z, \HA{\mu}\delta\yop \del{1+\gop}} ,
\end{aligned}
\end{equation}
for all $\del{\xi, z}$ in $\bbW^{1,1}_{\infty,p}$ with 
$\del{\gop,\yop}:=\del{\gamma(\uop), y(\uop)}$,
where $\DO$ is the parametrized form 
	\begin{equation}\label{eq:DO_form}
		\DO\sbr{(\xi, z), \rop; \gop, \yop} := \BO\sbr{z, \rop;A\sbr{\gop}} + \BO\sbr{\yop + v, \rop;\Dif A\sbr\gop \difdir\xi}
	\end{equation}
with derivative of $A$ with respect to $\gamma$ given by 
$\Dif A\sbr{\gop}\difdir{h} = A_1\sbr{\gop}h + A_2\sbr{\gop}\totalD_{x_1} h$
\cite[(2.4)]{HAntil_RHNochetto_PSodre_2014a}.
Moreover, the duality pairings on the right hand side of \eqref{eq:lin_rop_sop} are reduced to standard integrals due to the $L^2$-regularity imposed by the cost functional.

The assembly of \eqref{eq:lin_rop_sop} is nontrivial.
In view of \eqref{eq:bilinear_form} and \eqref{eq:DO_form} we have
\begin{align}\label{int-in-x2}
    \BO\sbr{\yop + v, \rop;\Dif A\sbr\gop \difdir\xi} 
    &= \int_0^1 \del{\int_0^1  A_1\sbr{\gop} \grad (\yop+v) \cdot \grad \rop \dif x_2} \xi \dif x_1 \\
    &\quad+ \int_0^1 \del{\int_0^1  A_2\sbr{\gop} \grad (\yop+v) \cdot
      \grad \rop \dif x_2} \xi_{x_1} \dif x_1 , 
\end{align}
and the computation of the inner integrals in the
variable $x_2$ alone might seem like a daunting task. However we can
circumvent this issue altogether with the following simple
observation. The control-to-state map $G_v:\calU \rightarrow\bbW^{1,1}_{\infty,p}$
admits a Fr\'echet derivative $(\gamma,y)=G_v'(\bar u) h \in
\bbW^{1,1}_{\infty,p}$ which satisfies the linear variational system
\begin{equation} \label{eq:g_diff_u}
	\BG\sbr{\gamma, \zeta} + \DO\sbr{\del{\gamma, y}, z+ E\zeta; \gop, \yop} = \int_0^1 h \zeta
	\quad \forall \del{\zeta, z} \in \bbW^{1,1}_{1,q},
\end{equation}
for every $\bar u \in\calU$ and $h\in L^2(0,1)$. This formal
differentiation of \eqref{eq:lin_st_const} is rigorously justified in
\cite[Theorem~4.12]{HAntil_RHNochetto_PSodre_2014a}. The system
\eqref{eq:g_diff_u} consists of the two equations
\begin{equation*}
\begin{alignedat}{2}
\BO[y,z,A\sbr{\gop}] + \BO[\bar y+v,z; \Dif
A\sbr{\gop}\difdir{\gamma}] &=0 \quad &&\forall z\in\sobZ1q\Omega
\\
\BO[y,E\zeta;A\sbr{\gop}] + \BO[\bar y + v, E\zeta; \Dif A\sbr{\gop}\difdir{\gamma}]
+ \BG[\gamma,\zeta] &= \langle h,\zeta \rangle
\quad && \forall \zeta \in \sobZ11{0,1}.
\end{alignedat}
\end{equation*}
The formal adjoint of this system, obtained upon regarding
$(\gamma,y)\in \bbW^{1,1}_{\infty,p}$ as test functions and 
$(\zeta,z)\in\bbW^{1,1}_{1,q}$ as unknowns, reads as follows:
\begin{equation}\label{Frechet-control-to-state}
\begin{aligned}
\BO[y,z,A\sbr{\gop}] + \BO[y,E\zeta;A\sbr{\gop}] 
&= \langle f,y \rangle
\\
\BO[\bar y+v,z; \Dif A\sbr{\gop}\difdir{\gamma}] 
+ \BO[\bar y + v, E\zeta; \Dif A\sbr{\gop}\difdir{\gamma}]
+ \BG[\gamma,\zeta] &= \langle g,\gamma \rangle.
\end{aligned}
\end{equation}

\begin{lemma}[relation between \eqref{eq:lin_rop_sop} and \eqref{Frechet-control-to-state}]\label{L:Jacobian-state}
The linear system \eqref{eq:lin_rop_sop} coincides with
\eqref{Frechet-control-to-state} provided $f = \HA{\mu}\delta\bar y(1+\bar\gamma)$
and $g = \delta\bar\gamma +\frac{\HA{\mu}}2 \int_0^1 |\delta \bar y|^2 \dif x_2$.
\end{lemma}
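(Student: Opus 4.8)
The plan is to verify the identity by direct computation, exploiting the fact that both \eqref{eq:lin_rop_sop} and \eqref{Frechet-control-to-state} are linear variational problems whose test functions range over the same product space $\bbW^{1,1}_{\infty,p}$. It therefore suffices to show that, after the stated substitution of $f$ and $g$, the two problems carry the same bilinear form with the roles of trial and test spaces merely transposed. The one structural point I would isolate at the outset is the role of the extension $E$: the unknown $(\sop,\rop)$ of \eqref{eq:lin_rop_sop} is constrained by $(\sop,\rop-E\sop)\in\bbW^{1,1}_{1,q}$, so I would set $r_0:=\rop-E\sop\in\sobZ1q\Omega$ and regard $(\sop,r_0)\in\bbW^{1,1}_{1,q}$ as the unknown, which is precisely the solution space of \eqref{Frechet-control-to-state}.

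First I would insert the definition \eqref{eq:DO_form} of $\DO$ into \eqref{eq:lin_rop_sop}, and, using that the pairings on the right reduce to $L^2$ integrals, record the fully expanded equation
\[
\BG\sbr{\xi,\sop} + \BO\sbr{z,\rop;A\sbr{\gop}} + \BO\sbr{\yop+v,\rop;\Dif A\sbr{\gop}\difdir{\xi}} = \pair{\xi,g} + \pair{z,f}
\]
for all $(\xi,z)\in\bbW^{1,1}_{\infty,p}$, where $g=\delta\gop+\frac{\mu}2\int_0^1|\delta\yop|^2\dif x_2$ and $f=\mu\delta\yop(1+\gop)$ are read directly off the displayed formula for $\calJ_1'(\gop,\yop)$. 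Since the test space is the product $\sobZ1\infty{0,1}\times\sobZ1p\Omega$, I would then test separately with $(\xi,0)$ and with $(0,z)$; using that $\Dif A\sbr{\gop}\difdir{0}=0$, this splits the single equation into the pair
\[
\BO\sbr{z,\rop;A\sbr{\gop}}=\pair{z,f},\qquad
\BG\sbr{\xi,\sop}+\BO\sbr{\yop+v,\rop;\Dif A\sbr{\gop}\difdir{\xi}}=\pair{\xi,g}.
\]

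Next I would substitute $\rop=r_0+E\sop$ and expand each $\BO\sbr{\cdot\,,\rop;\cdot}$ by linearity in its second slot. Using the symmetry $\BO\sbr{a,b;M}=\BO\sbr{b,a;M}$ and of the $L^2$ pairing, the first equation becomes $\BO\sbr{z,r_0;A\sbr{\gop}}+\BO\sbr{z,E\sop;A\sbr{\gop}}=\pair{f,z}$, which is exactly the first equation of \eqref{Frechet-control-to-state} once its test function $y$ is identified with $z$ and its unknown pair $(\zeta,z)$ with $(\sop,r_0)$. Likewise the second equation becomes $\BG\sbr{\xi,\sop}+\BO\sbr{\yop+v,r_0;\Dif A\sbr{\gop}\difdir{\xi}}+\BO\sbr{\yop+v,E\sop;\Dif A\sbr{\gop}\difdir{\xi}}=\pair{g,\xi}$, which matches the second equation of \eqref{Frechet-control-to-state} under the boundary-test identification $\gamma\leftrightarrow\xi$. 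Conversely, adding the two equations of \eqref{Frechet-control-to-state} and collapsing $r_0+E\sop$ back to $\rop$ by linearity returns the single expanded equation above, so the two systems are genuinely equivalent, not merely implied in one direction.

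The step I expect to demand the most care is notational rather than analytic: tracking which symbol is an unknown and which is a test function across the transposition, and in particular recognizing that the bulk unknown of \eqref{Frechet-control-to-state} (also written $z$ there) corresponds to the \emph{zero-trace part} $r_0=\rop-E\sop$ rather than to $\rop$ itself, while the bulk test function $y$ corresponds to the bulk test $z$ of \eqref{eq:lin_rop_sop}. Once that decomposition is fixed, the ``formal adjoint'' is nothing more than the same bilinear form with trial and test spaces exchanged, and no inner integration in $x_2$ as in \eqref{int-in-x2} is ever performed — which is exactly the assembly shortcut the lemma is designed to provide.
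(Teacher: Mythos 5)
Your proposal is correct, and its computational core is exactly the verification the paper's one-line proof leaves to the reader; the difference lies in the logical closure. The paper argues in one direction only: it checks that if $(\zeta,z)$ solves \eqref{Frechet-control-to-state} then $(\zeta,z+E\zeta)$ satisfies \eqref{eq:lin_rop_sop}, and then concludes by invoking the \emph{uniqueness} of solutions to \eqref{eq:lin_rop_sop} (imported from the companion paper). You instead observe that every step — inserting \eqref{eq:DO_form}, splitting the test space via $(\xi,0)$ and $(0,z)$, and the change of unknown $\rop = r_0 + E\sop$ — is reversible by linearity, so the two systems are literally the same variational equations under the trial/test identification $(\zeta,z)\leftrightarrow(\sop,\rop-E\sop)$, $(\gamma,y)\leftrightarrow(\xi,z)$; no well-posedness result is needed. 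What each buys: the paper's argument is one sentence but leans on an external uniqueness lemma; yours is self-contained, establishes the stronger statement that the systems coincide as equations (not merely that their solutions agree), and makes explicit the one structural point that matters for implementation, namely that the bulk unknown of \eqref{Frechet-control-to-state} is the zero-trace part $\rop-E\sop$ rather than $\rop$. One cosmetic remark: the symmetry $\BO[a,b;M]=\BO[b,a;M]$ you invoke is never actually needed, since after the identification the argument order in every $\BO$ term already matches; it is harmless (both $A[\gop]$ and $\Dif A[\gop]\difdir{\xi}$ are symmetric) but can be dropped.
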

\begin{proof}
It suffices to check that $(\zeta,z+E\zeta)$ satisfies
\eqref{eq:lin_rop_sop} and invoke the uniqueness of
\eqref{eq:lin_rop_sop}, the latter being a consequence of
\cite[Lemma 5.6]{HAntil_RHNochetto_PSodre_2014a}.
\end{proof}

Lemma \ref{L:Jacobian-state} is instrumental for the implementation of
this control problem. Later in Section~\ref{s:oc_numerics_fbp}, we
choose Newton's method instead of a fixed point iteration to solve the
{\it state} equations, because it is locally a second order
method. Secondly, computing a Newton direction $(\gamma,y)$ requires
solving a linear system of type \eqref{eq:g_diff_u}. By transposition,
the same matrix can be used to solve for the {\it adjoint} variables
thereby making the seemingly complicated coupling $\DO$ rather simple
to deal with.

\subsection{Second-order Sufficient Conditions} \label{s:oc_second}
It is well known that the first order necessary optimality conditions
are also sufficient for the well-posedness of a convex optimization
problem with linear constraints. Unfortunately, we \emph{ cannot }
assert the convexity of our problem due to the highly nonlinear nature
of \eqref{eq:lin_cost}-\eqref{eq:lin_st_const}. A large portion of our
previous work \cite[Theorem 5.7]{HAntil_RHNochetto_PSodre_2014a} was 
devoted to proving a second-order sufficient condition.
We restrict ourselves to merely restating that result.

\begin{thm}[second-order sufficient conditions] \label{thm:second_order_suff}
If $\normSZ{v}{1}{p}{\Omega}$ is small enough and $\uop$ in $\Uad$ is an optimal control, then there exists a neighborhood of $\uop$ such that 
	\begin{align}   \label{eq:second_order_suff}
		\calJ''(\uop) \del{u-\uop}^2 &\ge \frac{\lambda}{2} \normLt{u-\uop}{0,1}^2 \quad \forall u \in \uop + \cone\uop .
	\end{align}
Furthermore, the following two conditions hold: \emph{local quadratic growth}
	\begin{align} \label{eq:J_quad_growth}
		\calJ(u) &\ge \calJ(\uop) + \frac{\lambda}{8}
                \normLt{u-\uop}{0,1}^2
                \quad \forall u \in \uop + \cone\uop,
\end{align}
and \emph{local convexity}
	\begin{align}  \label{eq:gradJ_quad_growth}
		\pairLt{\calJ'(u) - \calJ'(\uop), u-\uop}{0,1} \ge \frac{\lambda}{4} \normLt{u - \uop}{0,1}^2
		\quad \forall u \in \uop + \cone\uop . 
	\end{align}
\end{thm}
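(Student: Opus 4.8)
The plan is to establish the coercivity \eqref{eq:second_order_suff} of $\calJ''$ at $\uop$ first, and then to read off \eqref{eq:J_quad_growth} and \eqref{eq:gradJ_quad_growth} from it by a second-order Taylor expansion combined with the first-order condition \eqref{eq:var_ineq}. Throughout I would work with the reduced form $\calJ = \calJ_1\circ G_v + \calJ_2$ from \eqref{eq:reduced_cost} and exploit that $\calJ_2(u) = \frac{\lambda}{2}\normLt{u}{0,1}^2$ supplies the only coercive contribution, namely $\calJ_2''(\uop)h^2 = \lambda\normLt{h}{0,1}^2$, so that all the nonconvexity is confined to $\calJ_1\circ G_v$.

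For \eqref{eq:second_order_suff} I would differentiate $\calJ$ twice in a direction $h = u-\uop \in \cone\uop$. The chain rule gives
\begin{equation*}
  \calJ''(\uop)h^2 = \lambda\normLt{h}{0,1}^2 + \calJ_1''(\gop,\yop)\del{G_v'(\uop)h}^2 + \pair{\calJ_1'(\gop,\yop),\, G_v''(\uop)h^2},
\end{equation*}
and the goal reduces to bounding the last two terms from below by $-\frac{\lambda}{2}\normLt{h}{0,1}^2$. The first sensitivity $(\gamma,y)=G_v'(\uop)h$ solves the linearized system \eqref{eq:g_diff_u}, whose inf-sup stability yields $\norm{(\gamma,y)}_{\bbW^{1,1}_{\infty,p}} \lesssim \normLt{h}{0,1}$; differentiating \eqref{eq:lin_st_const} once more produces the analogous linear variational problem for $G_v''(\uop)h^2$, now with a right-hand side quadratic in $G_v'(\uop)h$ and carrying the second variation of $A\sbr{\gop}$, whence $\norm{G_v''(\uop)h^2}_{\bbW^{1,1}_{\infty,p}} \lesssim \normLt{h}{0,1}^2$. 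The decisive observation is that $\calJ_1'(\gop,\yop)$ and the coefficient variations $\Dif A\sbr{\gop}$ are controlled by $\normSZ{v}{1}{p}{\Omega}$ through the a priori bound on $(\gop,\yop)$; choosing $\normSZ{v}{1}{p}{\Omega}$ small enough therefore forces both offending terms to have absolute value at most $\frac{\lambda}{2}\normLt{h}{0,1}^2$, which is \eqref{eq:second_order_suff}.

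The two consequences follow from \eqref{eq:second_order_suff} by soft arguments. Since $u\mapsto\calJ''(u)$ depends continuously on its base point (again through the stability of \eqref{eq:g_diff_u}), after shrinking the neighborhood the coercivity persists with a slightly smaller constant, $\calJ''(w)(u-\uop)^2 \ge \frac{\lambda}{4}\normLt{u-\uop}{0,1}^2$ for every $w$ on the segment joining $\uop$ to $u$ and every $u-\uop\in\cone\uop$. Integrating $\calJ''$ along this segment gives $\pairLt{\calJ'(u)-\calJ'(\uop),\, u-\uop}{0,1} = \int_0^1 \calJ''(\uop+s(u-\uop))(u-\uop)^2\,\dif s \ge \frac{\lambda}{4}\normLt{u-\uop}{0,1}^2$, i.e.\ \eqref{eq:gradJ_quad_growth}. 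For \eqref{eq:J_quad_growth} I would Taylor expand to second order, $\calJ(u) = \calJ(\uop) + \pairLt{\calJ'(\uop),\, u-\uop}{0,1} + \frac12\calJ''(\uop+\theta(u-\uop))(u-\uop)^2$; the first-order term is nonnegative for admissible $u$ by the variational inequality \eqref{eq:var_ineq}, and the quadratic remainder is $\ge \frac{\lambda}{8}\normLt{u-\uop}{0,1}^2$ by the neighborhood coercivity (the extra factor $\frac12$ coming from Taylor's formula), yielding \eqref{eq:J_quad_growth}.

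The main obstacle is the coercivity step, and its difficulty is twofold. First, one must control the second-order sensitivity $G_v''(\uop)h^2$ of the genuinely nonlinear control-to-state map, which requires differentiating the rough-coefficient state equation \eqref{eq:lin_st_const} twice and re-running the inf-sup and fixed-point stability analysis at the level of the second derivative. Second, one must track precisely how each non-coercive term scales with $\normSZ{v}{1}{p}{\Omega}$, so that the qualitative ``small enough'' hypothesis can be turned into a quantitative threshold that beats $\frac{\lambda}{2}$. By contrast, the passage to \eqref{eq:gradJ_quad_growth} and \eqref{eq:J_quad_growth} is comparatively routine, resting only on the local continuity of $u\mapsto\calJ''(u)$ and the sign of the first-order term.
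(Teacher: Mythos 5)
First, a point of reference: this paper does not prove \thmref{thm:second_order_suff} at all --- the authors explicitly ``restrict [themselves] to merely restating'' \cite[Theorem 5.7]{HAntil_RHNochetto_PSodre_2014a} --- so the benchmark is the companion paper's argument. Your architecture reproduces its shape correctly: chain-rule expansion of $\calJ''$ for the reduced functional with the coercive part supplied solely by $\calJ_2$, smallness of $\normSZ{v}1p\Omega$ to absorb the nonconvex terms, and then the passage $\lambda/2 \to \lambda/4 \to \lambda/8$ via continuity of $\calJ''$ near $\uop$, the integral representation of $\calJ'(u)-\calJ'(\uop)$, Taylor's formula, and $\pairLt{\calJ'(\uop),u-\uop}{0,1}\ge 0$, which follows from \eqref{eq:var_ineq} and the definition of $\cone\uop$. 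That second half of your sketch, including the constants, is sound.

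The genuine gap is in what you call the decisive observation. It is false that $\calJ_1'(\gop,\yop)$ is controlled by $\normSZ{v}1p\Omega$: as the right-hand side of \eqref{eq:lin_rop_sop} shows, $\calJ_1'(\gop,\yop)$ is built from the residuals $\gop-\gamma_d$ and $\yop+v-y_d$, which are dictated by the fixed targets $\gamma_d,y_d$ and remain $O(1)$ as $v\to 0$ (take $v=0$, $u=0$: the state is $(0,0)$, yet $\calJ_1'\neq 0$ unless $\gamma_d$ and $y_d$ vanish). Likewise $\Dif A\sbr{\gop}$ is bounded by $C_A$ uniformly in $v$, not small. Combined with the bounds you actually assert, $\norm{G_v'(\uop)h}_{\bbW^{1,1}_{\infty,p}}\lesssim \normLt{h}{0,1}$ and $\norm{G_v''(\uop)h^2}_{\bbW^{1,1}_{\infty,p}}\lesssim \normLt{h}{0,1}^2$, the offending term $\pair{\calJ_1'(\gop,\yop),G_v''(\uop)h^2}$ is only bounded by $C(\gamma_d,y_d)\normLt{h}{0,1}^2$ with no small factor, and coercivity \eqref{eq:second_order_suff} does not follow. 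The smallness must instead be located in the sensitivities: in \eqref{eq:g_diff_u} every term containing $\Dif A\sbr{\gop}$ (and, upon differentiating once more, the second variation of $A\sbr{\gop}$) is multiplied by $\grad(\yop+v)$, whose norm is $\lesssim (1+\beta C_A)\normSZ{v}1p\Omega$; running the inf-sup argument with this factor shows that the $y$-component of $G_v'(\uop)h$ is $O\del{\normSZ{v}1p\Omega\normLt{h}{0,1}}$ in $\sob1p\Omega$ and that $G_v''(\uop)h^2$ is $O\del{\normSZ{v}1p\Omega\normLt{h}{0,1}^2}$ in $\bbW^{1,1}_{\infty,p}$, while $\calJ_1'$ and $\calJ_1''$ (whose cross term in $\del{\delta\gamma,\delta y}$ is not sign-definite and also needs the small $y$-sensitivity) are merely bounded by the data. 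The offending terms are then $\lesssim C(\gamma_d,y_d)\normSZ{v}1p\Omega\normLt{h}{0,1}^2$, and the threshold on $\normSZ{v}1p\Omega$ depends on $\lambda$ and the targets. With the $v$-dependence relocated in this way your proof goes through; as written, the key step fails.
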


\section{Strong Solutions: Second-order Regularity}\label{s:high_regularity}
The goal of this section is to prove the existence of strong solutions
to the state and adjoint equations. The underlying second-order
  Sobolev regularity is crucial for the a-priori error
estimates of Section \ref{s:oc_disc_apriori}, and thus an important
contribution of this paper. 

\subsection{Second-order Regularity in the Square}

We start with an auxiliary regularity result for the square
$\Omega$. This type of results are well known for $C^{1,1}$ domains
\cite[Theorem 9.15 and Lemma 9.17]{DGilbarg_NTrudinger_2001a}.

\begin{lemma}[second-order regularity in the square]\label{L:2-reg}
Let $\Omega = (0,1)^2$ and $A = (a_{ij})_{ij=1}^2 \in
W^1_\infty(\Omega)$. If $f \in L^p(\Omega)$, with $1 < p < \infty$,
then there exists a unique solution $w \in \sob2p\Omega \cap \sobZ1p\Omega$ to 
\begin{equation}\label{div-form}
    -\divgText(A\nabla w) = f \quad \mbox{in } \Omega,
\end{equation}
and a constant $C_\#$ depending on $\|A\|_{W^1_\infty(\Omega)}$
  and $p$ such that
\begin{equation}\label{apriori-div}
    \normS{w}2p\Omega 
        \le C_\# \norm{f}_{L^p(\Omega)} . 
\end{equation}
\end{lemma}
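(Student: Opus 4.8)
The plan is to prove uniqueness by an energy argument and to obtain existence together with the bound \eqref{apriori-div} by pairing an a priori estimate with the method of continuity anchored at $A=I$. Uniqueness is the easy part: if $w_1,w_2\in W^2_p(\Omega)\cap\mathring W^1_p(\Omega)$ both solve \eqref{div-form}, their difference $w$ is a weak solution of the homogeneous problem with zero trace, so testing against $w$ and using the (implicitly assumed) uniform ellipticity of $A$ together with Poincar\'e's inequality forces $\nabla w=0$, hence $w=0$; this is just coercivity of $\BO[\cdot,\cdot;A]$ on $\mathring W^1_2(\Omega)$.

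For existence and the estimate the core is the a priori bound $\|w\|_{W^2_p(\Omega)}\le C_\#\|f\|_{L^p(\Omega)}$ for $w\in W^2_p(\Omega)\cap\mathring W^1_p(\Omega)$. First I rewrite \eqref{div-form} in nondivergence form, $-a_{ij}\partial_{ij}w = f + (\partial_i a_{ij})\partial_j w =: F$, which is legitimate because $A\in W^1_\infty(\Omega)$ makes the first-order coefficients $\partial_i a_{ij}$ bounded, and $\|F\|_{L^p}\le\|f\|_{L^p}+C\|\nabla w\|_{L^p}$. I then localize with a finite partition of unity subordinate to a cover of $\overline\Omega$ by interior balls, boundary patches centered on the open edges, and one patch around each of the four corners. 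On the interior and flat-edge patches the standard Calder\'on--Zygmund machinery applies: freeze the (Lipschitz, hence continuous) coefficient at the center, invoke the constant-coefficient $L^p$ estimate on $\mathbb{R}^2$ respectively on a half-space (admissible since a flat edge is locally $C^{1,1}$, so Gilbarg--Trudinger applies after flattening), and control the frozen-coefficient error by the oscillation of $A$, which is $O(\mathrm{diam})$ by the Lipschitz bound.

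The main obstacle is the corner patches, where $\Omega$ is only Lipschitz and Gilbarg--Trudinger does not apply. Near a corner $x_0$ I freeze $A$ at $A_0:=A(x_0)$ and study $-\divgText(A_0\nabla\cdot)$ on the quarter-plane with homogeneous Dirichlet data. Diagonalizing the symmetric positive-definite $A_0$ by a linear change of variables turns this operator into the Dirichlet Laplacian on a sector of opening $\omega$ with $\cos\omega = (A_0)_{12}/\sqrt{(A_0)_{11}(A_0)_{22}}\in(-1,1)$, so $\omega\in(0,\pi)$. Such a sector has singular exponents $k\pi/\omega$, and the solution lands in $W^2_p$ precisely when none falls in the forbidden window, i.e.\ when $\pi/\omega>2-2/p$. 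For the right-angle corners of the square the off-diagonal entry of $A$ is small, so $\omega$ is close to $\pi/2$ and $\pi/\omega$ close to $2$; the window is then avoided in the relevant range of $p$, giving the corner estimate via the explicit Mellin/sector analysis (or, at a corner where the off-diagonal term vanishes, by odd reflection across the two edges, which preserves the frozen operator exactly). The frozen-coefficient error is again absorbed by the Lipschitz oscillation of $A$.

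Summing the local estimates through the partition of unity yields $\|w\|_{W^2_p}\le C(\|f\|_{L^p}+\|\nabla w\|_{L^p}+\|w\|_{L^p})$; interpolating $\|\nabla w\|_{L^p}\le\epsilon\|w\|_{W^2_p}+C_\epsilon\|w\|_{L^p}$ and absorbing the $\epsilon$-term leaves the lower-order tail $\|w\|_{L^p}$, which in two dimensions is controlled directly by the energy estimate $\|w\|_{H^1}\lesssim\|f\|_{L^p}$ together with $H^1\hookrightarrow L^p$ (or by a compactness--uniqueness argument). This is \eqref{apriori-div}. Finally, existence follows by the method of continuity applied to $L_t:=-\divgText(A_t\nabla\cdot)$ with $A_t=(1-t)I+tA$: each $A_t$ is uniformly elliptic and Lipschitz with norms bounded uniformly in $t$, so the a priori estimate holds uniformly; at $t=0$ the operator is $-\Delta$ on the square, whose corner exponents $2k$ correspond to smooth harmonic functions, so there is no corner singularity and $-\Delta$ is an isomorphism from $L^p$ onto $W^2_p\cap\mathring W^1_p$. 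The method of continuity then transports surjectivity to $t=1$, completing the proof.
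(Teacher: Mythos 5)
Your uniqueness argument, the interior and flat-edge estimates, the absorption of the lower-order terms, and the method-of-continuity scaffolding are all sound, and your route (partition of unity, coefficient freezing, Mellin/sector analysis at the corners, continuity method anchored at $-\Delta$) is genuinely different from the paper's. But the proof breaks at the only step that is actually delicate: the corners. You assert that ``the off-diagonal entry of $A$ is small'' at the corners and that the singular window ``is then avoided in the relevant range of $p$.'' Neither is available: the lemma assumes only $A\in W^1_\infty(\Omega)$ (with ellipticity implicit) and claims the conclusion for \emph{every} $1<p<\infty$. Moreover, the opening angle is governed by $A_0^{-1}$, not $A_0$: the change of variables $y=A_0^{-1/2}x$ gives $\cos\omega = -a_{12}(x_0)/\sqrt{a_{11}(x_0)a_{22}(x_0)}$ at the corner whose edges point along $+e_1,+e_2$, and the sign flips at the adjacent corners. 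Hence if $a_{12}\neq0$ at some corner, then at that corner or at a neighboring one the transformed sector is obtuse, the leading exponent is $\pi/\omega<2$, and $r^{\pi/\omega}\sin(\pi\theta/\omega)$ lies outside $W^2_p$ for every $p\ge 2/(2-\pi/\omega)$ --- no matter how small $|a_{12}(x_0)|$ is. Smallness of the off-diagonal enlarges the admissible range of $p$; it never yields all of $(1,\infty)$.

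Carried out honestly, your corner analysis therefore does not prove the lemma; it produces a counterexample to it as stated: take $A$ constant with $a_{11}=a_{22}=1$, $a_{12}=a_{21}=\tfrac12$, so that $\omega=2\pi/3$ at one corner; a cutoff of the pullback of $r^{3/2}\sin(3\theta/2)$ is an $H^1_0(\Omega)$ solution of \eqref{div-form} with right-hand side in $L^\infty(\Omega)$ that is not in $W^2_p(\Omega)$ for $p\ge4$. So the corner step cannot be closed without extra structure, e.g.\ $a_{12}=0$ at the corners, in which case the frozen operator is diagonal, the transformed opening is exactly $\pi/2$, and the exponents $2k$ avoid the window for every finite $p$. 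It is worth noting how the paper sidesteps corners entirely: it reflects $f$ oddly and $A$ evenly across the four edges, so that $\Omega$ becomes compactly contained in the enlarged square and only the interior estimates of Gilbarg--Trudinger (Theorem 9.11) plus a compactness argument are needed. That reflection, however, makes the extended solution the odd reflection of $w$ only if the cross terms respect the symmetry, i.e.\ only if $a_{12}$ vanishes on $\partial\Omega$ (and is then extended oddly, which is what keeps the extension Lipschitz) --- essentially the same hidden hypothesis your corner patches need. In short, your approach is the sharper diagnostic, since it identifies exactly the compatibility condition on $a_{12}$ (or the restriction on $p$) under which the statement holds, but as written it proves less than the lemma claims, and no argument can do better without adding such a hypothesis.
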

\begin{proof}
We proceed in several steps. First we prove \eqref{apriori-div}
assuming that there is a solution in $W^2_p(\Omega)$, and next we show
the existence of such a solution.

1. {\it Reflection}:
We introduce an odd reflection $\widetilde{f}$ of $f$ and an even reflection 
$\widetilde{A}$ of $A$ to the adjacent unit squares of $\Omega$
so that the extended domain $\widetilde\Omega$ is a square with
vertices $(-1,-1)$, $(2,-1)$, $(2,2)$, $(-1,2)$. We observe that
$\widetilde{f} \in L^p(\widetilde\Omega)$ and 
$\widetilde{A} \in W^1_\infty(\widetilde\Omega)$.
Since $\|\widetilde{f}\|_{H^{-1}(\widetilde{\Omega})}\lesssim
\|\widetilde{f}\|_{L^p(\widetilde{\Omega})}$ by Sobolev
embedding in two dimensions, there exists a unique solution 
$\widetilde{w}\in H^1_0(\widetilde{\Omega})$ to 
the extended problem
\begin{equation}\label{ext-div-form}
-\divgText(\widetilde{A}\nabla \widetilde{w}) = 
\widetilde{f} \quad \mbox{in } \widetilde{\Omega}.
\end{equation}
Such $\widetilde{w}$
is an odd reflection to $\widetilde\Omega$ of its restriction $w$ to
$\Omega$, whence $w$ has a vanishing trace on $\partial\Omega$. 
Moreover, $\|\widetilde{w}\|_{L^p(\widetilde{\Omega})}\lesssim
\|\widetilde{w}\|_{H^1_0(\widetilde{\Omega})}$ because of Sobolev
embedding and 
\begin{equation}\label{Lp-norms}
\|\widetilde{w}\|_{L^p(\widetilde\Omega)} \lesssim \|w\|_{L^p(\Omega)}.
\end{equation}

2. {\it A priori $W^2_p(\Omega)$-estimate}: If
$\widetilde{w} \in W^2_{p,{\tt loc}}(\widetilde{\Omega}) \cap L^p(\widetilde{\Omega})$ is a
solution of \eqref{ext-div-form}, 
then we write \eqref{ext-div-form} in nondivergence form
\[
-\widetilde{A}:D^2\widetilde{w} - \divgText\widetilde{A}\cdot\nabla
\widetilde{w} = \widetilde{f}
\]
and apply the interior $W^2_p$ estimates of 
\cite[Theorem 9.11]{DGilbarg_NTrudinger_2001a} to write
\begin{equation}\label{localW2p}
\|w\|_{\sob2p\Omega} = 
\|\widetilde{w}\|_{\sob2p\Omega} \lesssim \|\widetilde{w}\|_{L^p(\widetilde{\Omega})}
+ \|\widetilde{f}\|_{L^p(\widetilde{\Omega})}
\lesssim \|w\|_{L^p(\Omega)} + \|f\|_{L^p(\Omega)}
\end{equation}
and $w\in\sobZ1p\Omega$.
To show that this estimate implies \eqref{apriori-div}, we argue by
contradiction as in \cite[Lemma 9.17]{DGilbarg_NTrudinger_2001a}. Let
$\{w_m\}\subset\sob2p\Omega\cap\sobZ1p\Omega$ be a sequence satisfying
\[
\|w_m\|_{\sob2p\Omega} = 1, \qquad
\|f_m\|_{L^p(\Omega)} \rightarrow 0
\]
as $m\to\infty$, where $f_m=-\divgText(A\nabla w_m)$. 
Since the unit ball in $\sob2p\Omega$ is weakly compact for
$1<p<\infty$, there exists a
subsequence, still labeled $w_m$, that converges weakly in
$\sob2p\Omega$ and strongly in $\sob1p\Omega$ to a function 
$w\in\sob2p\Omega\cap\sobZ1p\Omega$. Therefore
\[
\int_\Omega v f_m = - \int_\Omega v \del{A:D^2 w_m + \divgText A\cdot\nabla w_m}
\rightarrow - \int_\Omega v\del{A:D^2 w + \divgText A \cdot\nabla w} = 0
\]
for all $v\in L^q(\Omega)$, whence $-\divgText(A\nabla w)=0$ and $w=0$
because of uniqueness. On the
other hand, \eqref{localW2p} yields $1\lesssim\|w\|_{L^p(\Omega)}$,
which is a contradiction. This thus shows the validity of \eqref{apriori-div}.

3. {\it Existence}.
It remains to show that there is a solution $W^2_{p,{\tt loc}}(\widetilde{\Omega})$
of \eqref{ext-div-form}.
If $\widetilde{f}\in L^2(\widetilde{\Omega})$, then the unique
solution $\widetilde{w}\in H^1_0(\widetilde{\Omega})$ of \eqref{ext-div-form}
belongs to $H^2_{\tt loc}(\widetilde{\Omega})$ and
\[
\|\widetilde{w}\|_{H^2(\Omega')} \lesssim \|\widetilde{w}\|_{H^1_0(\widetilde{\Omega})} + 
\|\widetilde{f}\|_{L^2(\widetilde{\Omega})} \lesssim
\|f\|_{L^2(\Omega)},
\]
for all $\Omega'$ compactly contained in $\widetilde{\Omega}$
\cite[Theorem 8.8]{DGilbarg_NTrudinger_2001a}.
We first let $p>2$ and lift the regularity of $\widetilde{w}$ to 
$W^2_{p,{\tt loc}}(\widetilde{\Omega})$ upon applying 
\cite[Lemma 9.16]{DGilbarg_NTrudinger_2001a} which gives the
estimate
\begin{equation}\label{Omega'}
\|\widetilde{w}\|_{W^2_p(\Omega')} \lesssim 
\|\widetilde{w}\|_{L^p(\widetilde{\Omega})} + \|\widetilde{f}\|_{L^p(\widetilde{\Omega})}.
\end{equation}
If $1<p<2$ instead, we approximate $\widetilde{f}\in L^p(\widetilde{\Omega})$
by a sequence $\{\widetilde{f}_m\}\subset L^2(\widetilde{\Omega})$. Since
$\widetilde{w}_m\in H^2_{\tt loc}(\widetilde{\Omega})\subset
\sob2{p,{\tt loc}}{\widetilde{\Omega}}$, we can 
apply the interior $W^2_p$ estimates of 
\cite[Theorem 9.11]{DGilbarg_NTrudinger_2001a} to deduce
\eqref{Omega'} again for $\widetilde{w}_m$. Moreover, \eqref{Omega'}
shows that $\{\widetilde{w}_m\}$ is a Cauchy sequence in
$W^2_p(\Omega')$ for any $\Omega'$, whence \eqref{Omega'} remains valid for
the limit $w$. This finishes the proof.
\end{proof}
%

\subsection{State Equations}
We resort to the fixed point argument in \cite[Section
2]{PSaavedra_RScott_1991} and 
\cite[Section 4.1.1]{HAntil_RHNochetto_PSodre_2014a}. 
It consists of three steps: defining a convex set which acts as
domain for the fixed point iterator,  linearizing the free boundary
problem by freezing one variable, and identifying conditions to guarantee a contraction on the convex set. 

To deal with second-order regularity we need to introduce, besides 
the space $\bbW^{1,1}_{\infty,p}$, the second order Banach subspace product
\[
    \bbW^{2,2}_{\infty,p} :=  \bbW^{1,1}_{\infty,p} 
\cap \big(\sob2\infty{0,1} \times \sob2 p\Omega \big),
\]
and endow both $\bbW^{1,1}_{\infty,p}$ and $\bbW^{2,2}_{\infty,p}$ with the norms
    \[
		\norm{\del{\gamma, y}}_{\bbW^{1,1}_{\infty,p}} 
		    := \del{1+\beta C_A} \normSZ{v}1p\Omega\normSZ{\gamma}1\infty{0,1} + \normSZ{y}1p\Omega ,
	\]	
and
	\[
		\norm{\del{\gamma, y}}_{\bbW^{2,2}_{\infty,p}} 
		    := \del{1+2C_A C_\#} \normS{v}2p\Omega\normS{\gamma}2\infty{0,1} + \normS{y}2p\Omega ,
	\]	
respectively; hereafter $C_A$ is a bound 
in $L^\infty(\Omega)$ on the
operator $A$ and its first and second order derivatives
\cite[Proposition 2.1]{HAntil_RHNochetto_PSodre_2014a}, and 
$C_\#$ is the constant in \eqref{apriori-div}.
To guarantee that the assumptions for the first-order regularity results in
\cite[Section 4.1.1]{HAntil_RHNochetto_PSodre_2014a} hold,
we must iterate on a subset of
\cite[Eq. (4.12)]{HAntil_RHNochetto_PSodre_2014a}
	\[
    \bbB_1  := \set{(\gamma,y) \in \bbW^{1,1}_{\infty,p} :\;
		\normSZ{y}{1}{p}{\Omega} \leq \beta C_A \normSZ v 1 p \Omega,\; \normSZ{\gamma}1\infty{0,1} \leq 1},
	\]
where $\beta$ is the inf-sup constant for $\BO$ in $\sob1p\Omega$
\cite[Proposition 4.1]{HAntil_RHNochetto_PSodre_2014a}.
For the purpose of finding a strong solution, we further restrict $\bbB_1$ as follows:
	\begin{align*}
		\bbB_2 := \set{ \del{\gamma, y} \in \bbB_1 \cap \bbW^{2,2}_{\infty,p} :\;
			\normS{y}{2}{p}{\Omega} \leq 2C_A C_\#
                        \normS v 2 p \Omega,\; 
			\normSZ{\gamma}2\infty{0,1} \leq 1} .
	\end{align*}

We linearize the free boundary problem by considering the following operator $T:\bbB_2 \rightarrow \bbW^{2,2}_{\infty,p}$ defined as
	\begin{equation} \label{eq:T_map}
		T(\gamma, y) :=  \del{T_1(\gamma, y), T_2(\gamma, y)}
                = \del{\gtl, \ytl}
                \quad \forall \del{\gamma,y} \in \bbB_2,
	\end{equation}
where $\gtl = T_1(\gamma, y) \in \sob2\infty{0,1}\cap\sobZ1\infty{0,1}$ is the unique solution to
	\begin{equation}
\begin{PHDDISABLED}
	\label{eq:t1_var}
\end{PHDDISABLED}
		- \kappa \totalD_{x_1}^2\gtl 
		    = A\sbr{\gamma}\grad (y+v) \cdot \nu + u
			\quad \mbox{in } \del{0,1},
	\end{equation}
and $\ytl = T_2(\gamma,y) \in \sob2p\Omega \cap \sobZ1p\Omega$ is the unique solution to
	\begin{equation}
\begin{PHDDISABLED}
	\label{eq:t2_var}
\end{PHDDISABLED}
      -\divg{}\big(A\sbr{T_1(\gamma, y)}\nabla\ytl\big)
      = \divg{}\big(A\sbr{T_1(\gamma, y)} \grad v \big)\quad \mbox{in } \Omega,
	\end{equation}
where $\divg{}A\sbr{T_1(\gamma, y)}$ is computed row-wise.
The operator $T$ maps $\bbB_1$ into itself provided $v$ and $u$ are
restricted to verify
\begin{equation}\label{B1-restriction}
|v|_{\sob1p\Omega} \le \frac{1-\theta_1}{\alpha C_E C_A (1+\beta C_A)},
\qquad
\|u\|_{L^2(0,1)} \le \frac{\theta_1}{\alpha},
\end{equation}
for some $\theta_1 \in (\beta C_A/(1+\beta C_A), 1)$ 
\cite[Lemma 4.3]{HAntil_RHNochetto_PSodre_2014a}. 
We now investigate additional conditions for $T$ to map $\bbB_2$ into itself.

\begin{lem}[range of $T$]\label{lem:B2_to_B2}
Let $C_S$ be the Sobolev embedding constant between $\sob2p\Omega$ and
  $\sob1\infty\Omega$.
The operator $T$ maps $\bbB_2$ to $\bbB_2$ if, in addition to
\eqref{B1-restriction}, the following relation holds 
\begin{equation}\label{eq:state_map}
    C_A C_S\del{1+2C_AC_\#}\normS{v}2p\Omega + \normLi{u}{0,1} \leq \kappa .
\end{equation}
\end{lem}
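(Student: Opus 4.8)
The plan is to fix an arbitrary $(\gamma,y)\in\bbB_2$ and check that its image $T(\gamma,y)=(\gtl,\ytl)$ obeys the two bounds that distinguish $\bbB_2$ from $\bbB_1$, namely $\normSZ{\gtl}2\infty{0,1}\le1$ and $\normS{\ytl}2p\Omega\le2C_AC_\#\normS v2p\Omega$. The remaining, lower-order constraints $\normSZ{\gtl}1\infty{0,1}\le1$ and $\normSZ{\ytl}1p\Omega\le\beta C_A\normSZ v1p\Omega$ come for free, since $\bbB_2\subset\bbB_1$ and $T$ already maps $\bbB_1$ into itself under \eqref{B1-restriction} by \cite[Lemma~4.3]{HAntil_RHNochetto_PSodre_2014a}. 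Because the coefficient matrix in the equation \eqref{eq:t2_var} for $\ytl$ is $A\sbr{\gtl}$, I would treat the curvature component first and only afterwards the bulk component.

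For the curvature, reading \eqref{eq:t1_var} pointwise gives $\totalD_{x_1}^2\gtl=-\kappa^{-1}\del{A\sbr\gamma\grad(y+v)\cdot\nu+u}$, so that $\normSZ{\gtl}2\infty{0,1}=\kappa^{-1}\normLi{A\sbr\gamma\grad(y+v)\cdot\nu+u}{0,1}$. I would bound the right-hand side by the triangle inequality, use the uniform $L^\infty$ bound $C_A$ on $A\sbr\gamma$ (valid since $\normSZ\gamma1\infty{0,1}\le1$), and control the boundary trace of the gradient by $\normLi{\grad(y+v)}{\Omega}\le\normS{y+v}1\infty\Omega$. The Sobolev embedding $\sob2p\Omega\hookrightarrow\sob1\infty\Omega$ with constant $C_S$---which is exactly where $p>2$ is needed, so that $\grad(y+v)$ has a bounded trace on the free boundary---together with the $\bbB_2$ bound $\normS y2p\Omega\le2C_AC_\#\normS v2p\Omega$ on the input $y$, then yields $\normLi{A\sbr\gamma\grad(y+v)\cdot\nu}{0,1}\le C_AC_S\del{1+2C_AC_\#}\normS v2p\Omega$. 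Adding $\normLi u{0,1}$ and dividing by $\kappa$ gives $\normSZ{\gtl}2\infty{0,1}\le1$ precisely under hypothesis \eqref{eq:state_map}.

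For the bulk component, once $\gtl$ is known to satisfy $\normSZ{\gtl}1\infty{0,1}\le1$ and $\normSZ{\gtl}2\infty{0,1}\le1$, the matrix $A\sbr{\gtl}$ belongs to $W^1_\infty(\Omega)$ with norm controlled by $C_A$, so \lemref{L:2-reg} applies to \eqref{eq:t2_var} with a uniform constant $C_\#$ and gives $\normS{\ytl}2p\Omega\le C_\#\normL{\divgText\del{A\sbr{\gtl}\grad v}}p\Omega$. Expanding the divergence row-wise as $\divgText\del{A\sbr{\gtl}\grad v}=\del{\divgText A\sbr{\gtl}}\cdot\grad v+A\sbr{\gtl}:D^2v$ and using the $C_A$ bound on $A\sbr{\gtl}$ and its first derivatives, I would estimate $\normL{\divgText\del{A\sbr{\gtl}\grad v}}p\Omega\le C_A\del{\normSZ v1p\Omega+\normSZ v2p\Omega}\le2C_A\normS v2p\Omega$, whence $\normS{\ytl}2p\Omega\le2C_AC_\#\normS v2p\Omega$. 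Notably this second bound uses only \lemref{L:2-reg} and the coefficient bounds, and does not invoke the extra condition \eqref{eq:state_map}.

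The one place that requires care, rather than routine bookkeeping, is the uniformity of $C_\#$: it must depend only on $\norm{A\sbr{\gtl}}_{W^1_\infty(\Omega)}$, which is why the curvature bound has to be established first and why the $\bbB$-constraints on $\gtl$ are what make the constant in \lemref{L:2-reg} coincide with the $C_\#$ appearing in the definition of the $\bbB_2$ ball. The remaining subtlety lies in making the constants close \emph{exactly}: the curvature estimate is the only step that consumes $W^2_p$-regularity of the input $y$ (through the $\sob2p\Omega\hookrightarrow\sob1\infty\Omega$ trace) and is therefore the only step forcing a smallness relation, which materializes as the trade-off between $v$, $u$ and $\kappa$ recorded in \eqref{eq:state_map}.
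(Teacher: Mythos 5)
Your proposal is correct and follows essentially the same route as the paper's proof: first bound $\normSZ{\gtl}2\infty{0,1}$ from \eqref{eq:t1_var} using the $C_A$ bound, the embedding $\cembed{\sob2p\Omega}{\sob1\infty\Omega}$ with constant $C_S$, the $\bbB_2$ bound on $y$, and hypothesis \eqref{eq:state_map}; then apply the a priori estimate \eqref{apriori-div} to \eqref{eq:t2_var} with the divergence expanded to get $\normS{\ytl}2p\Omega\le 2C_AC_\#\normS{v}2p\Omega$, with the $\bbB_1$ containment coming from \eqref{B1-restriction}. Your additional remarks on the ordering of the two steps and the uniformity of $C_\#$ make explicit what the paper leaves implicit, but the argument is the same.
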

\begin{proof}
In view of \eqref{B1-restriction} we have $T(\bbB_2)\subset\bbB_1$.
Given $\del{\gamma,y} \in \bbB_2$, we readily get from \eqref{eq:t1_var}
\[
   \kappa\norm{ \totalD_{x_1}^2\gtl }_{L^\infty(0,1)} 
		    \le \norm{A\sbr{\gamma}}_{L^\infty(0,1)} \norm{\grad (y+v)}_{L^\infty(\Omega)} +\norm{u}_{L^\infty(0,1)} .
\]
As $\norm{A\sbr{\gamma}}_{L^\infty(0,1)} \le C_A$, by definition of
$C_A$, and for $p>2$, $\sob2p\Omega$ is continuously embedded in
$\sob1\infty\Omega$ with constant
$C_S$, we deduce 
\[
    \kappa\norm{ \totalD_{x_1}^2\gtl }_{L^\infty(0,1)} 
        \le C_A C_S \normS{y+v}2p\Omega + \norm{u}_{L^\infty(0,1)}
    \le \kappa
\]    
because of \eqref{eq:state_map}. This implies 
$\normSZ{\gtl}2\infty{0,1}\le 1$, which is consistent with $\bbB_2$.

To deal with \eqref{eq:t2_var}, we invoke the a priori estimate 
\eqref{apriori-div} with $f=\divg{}\big(A[\gtl]\nabla v\big)$
\[
    \normS{\ytl}2p\Omega 
        \le C_\# \del{\|A[\gtl]\|_{L^\infty(\Omega)}
            \normS{v}2p\Omega +
            \norm{\divg{}A\sbr{\gtl}}_{L^\infty(\Omega)}
            \normSZ{v}1p\Omega}.
\]
This gives
\[
 \normS{\ytl}2p\Omega \le 2C_A C_\# \normS{v}2p\Omega,  
\]
and, together with the previous bound on $\gtl$, yields
$\del{\gtl,\ytl} \in \bbB_2$, as asserted. 
\end{proof}
    \begin{rem}[boundedness of $\uop$]
     \rm
        We point out that, 
        within the context of the optimal control problem, the 
        $L^\infty$-estimate requirement on $\uop$ in \eqref{eq:state_map} can be satisfied. 
        The reason is that the variational inequality \eqref{eq:lin_uop_var_ineq} implies 
        that 
        \[
            \uop = \left\{ \begin{array}{ll}
                             -\frac{\sop}{\lambda} \ ,
                                     & \mbox{if } \lambda \uop + \sop = 0 \\
                             -\frac{\theta_1 \sop}{2\alpha \norm{\sop}_{L^2(0,1)}} \ ,
                                     & \mbox{if } \lambda \uop + \sop \neq 0 
                           \end{array}
                   \right.
        \]
        i.e. the optimal control $\uop$ is proportional to the adjoint function 
        $\sop$ which in turn is absolutely continuous, i.e 
        $\sop \in \sobZ11{0,1} \subset \Linf{0,1}$.    
    \end{rem}
\begin{thm}[second-order regularity of the state variables]
\label{thm:second_order_state}
Let $C_S$ be the Sobolev embedding constant between $\sob2p\Omega$ and $\sob1\infty\Omega$, and 
\begin{align*}
    \Lambda = \kappa^{-1}C_AC_S \big(1+2C_A C_\# \big)^2.
\end{align*}    
If, in addition to \eqref{B1-restriction} and \eqref{eq:state_map}, 
the function $v$ further satisfies 
\begin{align}\label{eq:state_contraction}
\begin{aligned}
    \normS{v}2p\Omega &\le (1-\theta_2) \Lambda^{-1} , 
\end{aligned}         
\end{align}
for some $\theta_2 \in (0,1)$,
then the map $T$ defined in \eqref{eq:T_map} is a contraction on
$\bbB_2$ with constant $1-\theta_2$ for all $u\in \Uad$. 
\end{thm}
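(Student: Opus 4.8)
The plan is to verify the Lipschitz-type contraction directly from the defining relations \eqref{eq:t1_var}--\eqref{eq:t2_var}, estimating the two components of the image separately and then combining them through the weighted norm of $\bbW^{2,2}_{\infty,p}$. Fix $\del{\gamma_1,y_1},\del{\gamma_2,y_2}\in\bbB_2$, set $\del{\gtl_i,\ytl_i}:=T\del{\gamma_i,y_i}$ for $i=1,2$, and abbreviate $\delta\gamma:=\gamma_1-\gamma_2$, $\delta y:=y_1-y_2$, $\delta\gtl:=\gtl_1-\gtl_2$, $\delta\ytl:=\ytl_1-\ytl_2$. Throughout I would use the Lipschitz dependence of the coefficient matrix on its argument, namely $\normLi{A\sbr{\sigma_1}-A\sbr{\sigma_2}}{\Omega}\le C_A\normSZ{\sigma_1-\sigma_2}{1}{\infty}{0,1}$ and $\norm{A\sbr{\sigma_1}-A\sbr{\sigma_2}}_{\sob1\infty\Omega}\le C_A\normSZ{\sigma_1-\sigma_2}{2}{\infty}{0,1}$, both consequences of \cite[Proposition 2.1]{HAntil_RHNochetto_PSodre_2014a}, together with the embedding $\sob2p\Omega\hookrightarrow\sob1\infty\Omega$ of constant $C_S$ valid for $p>2$. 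Since both components carry homogeneous boundary conditions, I identify the $\sob2\infty{0,1}$-norm of the curvature variable with its second-order seminorm.

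\emph{Curvature component.} Subtracting the two instances of \eqref{eq:t1_var} cancels the control $u$ and, after inserting $\pm A\sbr{\gamma_2}\grad\del{y_1+v}\cdot\nu$, gives
\[
-\kappa\totalD_{x_1}^2\delta\gtl = \del{A\sbr{\gamma_1}-A\sbr{\gamma_2}}\grad\del{y_1+v}\cdot\nu + A\sbr{\gamma_2}\grad\del{\delta y}\cdot\nu \quad\env{in } \del{0,1}.
\]
Taking $L^\infty\del{0,1}$ norms, bounding the first term by $C_AC_S\normSZ{\delta\gamma}{2}{\infty}{0,1}\normS{y_1+v}{2}{p}{\Omega}$ and the second by $C_AC_S\normS{\delta y}{2}{p}{\Omega}$, and using $\del{\gamma_1,y_1}\in\bbB_2$ to obtain $\normS{y_1+v}{2}{p}{\Omega}\le\del{1+2C_AC_\#}\normS{v}{2}{p}{\Omega}$, I arrive after dividing by $\kappa$ at
\[
\normSZ{\delta\gtl}{2}{\infty}{0,1}\le\kappa^{-1}C_AC_S\del{\del{1+2C_AC_\#}\normS{v}{2}{p}{\Omega}\normSZ{\delta\gamma}{2}{\infty}{0,1}+\normS{\delta y}{2}{p}{\Omega}}=\kappa^{-1}C_AC_S\norm{\del{\delta\gamma,\delta y}}_{\bbW^{2,2}_{\infty,p}},
\]
the last equality being exactly the definition of the weighted norm.

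\emph{Bulk component.} Subtracting the two instances of \eqref{eq:t2_var} and regrouping yields the divergence-form identity
\[
-\divg{}\del{A\sbr{\gtl_1}\grad\delta\ytl}=\divg{}\del{\del{A\sbr{\gtl_1}-A\sbr{\gtl_2}}\grad\del{\ytl_2+v}}\quad\env{in }\Omega,
\]
with $\delta\ytl\in\sobZ1p\Omega$. Since $\normSZ{\gtl_1}{2}{\infty}{0,1}\le1$ (because $T\del{\bbB_2}\subset\bbB_2$), the coefficient obeys $\norm{A\sbr{\gtl_1}}_{\sob1\infty\Omega}\le C_A$, so the a priori estimate \eqref{apriori-div} applies with the \emph{same} constant $C_\#$. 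The source, being a divergence of a product, is controlled in $L^p$ by $\norm{A\sbr{\gtl_1}-A\sbr{\gtl_2}}_{\sob1\infty\Omega}\normS{\ytl_2+v}{2}{p}{\Omega}$, and the Lipschitz bound together with $\del{\gamma_2,y_2}\in\bbB_2$ gives
\[
\normS{\delta\ytl}{2}{p}{\Omega}\le C_AC_\#\del{1+2C_AC_\#}\normS{v}{2}{p}{\Omega}\normSZ{\delta\gtl}{2}{\infty}{0,1}.
\]

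\emph{Assembly.} Writing $M:=\del{1+2C_AC_\#}\normS{v}{2}{p}{\Omega}$, so that the bulk bound reads $\normS{\delta\ytl}{2}{p}{\Omega}\le C_AC_\# M\normSZ{\delta\gtl}{2}{\infty}{0,1}$, and using $1+C_AC_\#\le1+2C_AC_\#$, I get
\[
\norm{\del{\delta\gtl,\delta\ytl}}_{\bbW^{2,2}_{\infty,p}}=M\normSZ{\delta\gtl}{2}{\infty}{0,1}+\normS{\delta\ytl}{2}{p}{\Omega}\le\del{1+2C_AC_\#}^2\normS{v}{2}{p}{\Omega}\normSZ{\delta\gtl}{2}{\infty}{0,1}.
\]
Inserting the curvature estimate produces the factor $\kappa^{-1}C_AC_S\del{1+2C_AC_\#}^2\normS{v}{2}{p}{\Omega}=\Lambda\normS{v}{2}{p}{\Omega}$ in front of $\norm{\del{\delta\gamma,\delta y}}_{\bbW^{2,2}_{\infty,p}}$, and the smallness hypothesis \eqref{eq:state_contraction} bounds this by $1-\theta_2$, which is the asserted contraction. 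The main obstacle is the bulk step: its source is the divergence of a product of two $\gtl$-dependent factors, so it only lands in $L^p$ after spending the full $\sob1\infty\Omega$ regularity of $A\sbr{\gtl_1}-A\sbr{\gtl_2}$ (equivalently the second-order control of $\delta\gtl$) and the $\sob2p\Omega$ bound on $\ytl_2+v$; correctly chaining the curvature estimate into this step, while keeping $C_\#$ uniform over $\bbB_2$, is precisely what makes the two weights of the $\bbW^{2,2}_{\infty,p}$-norm telescope into the single constant $\Lambda$.
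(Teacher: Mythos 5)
Your proof is correct and follows essentially the same route as the paper's: subtract the two curvature equations and bound $\normSZ{\delta\gtl}2\infty{0,1}$ by $\kappa^{-1}C_AC_S\norm{(\delta\gamma,\delta y)}_{\bbW^{2,2}_{\infty,p}}$, then subtract the two bulk equations, apply \eqref{apriori-div} with the Lipschitz dependence of $A\sbr{\cdot}$ and the $\bbB_2$ bound on $\ytl_2$, and assemble in the weighted norm to produce $\Lambda\normS{v}2p\Omega\le 1-\theta_2$. The only differences are cosmetic constant bookkeeping (your intermediate bulk constant $C_AC_\#$ versus the paper's $2C_AC_\#$, which you then relax via $1+C_AC_\#\le 1+2C_AC_\#$) and a minor mislabeling where the bound on $\ytl_2$ comes from $T(\bbB_2)\subset\bbB_2$ (i.e.\ Lemma \ref{lem:B2_to_B2}) rather than from $(\gamma_2,y_2)\in\bbB_2$ directly.
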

\begin{proof}
Let $(\gamma_1,y_1) , (\gamma_2,y_2) \in \bbB_2$ with $(\gamma_1,y_1)
\neq (\gamma_2,y_2)$, and set $\delta\gamma := \gamma_1-\gamma_2,
\delta y := y_1 - y_2$. Combining \eqref{eq:T_map} and
\eqref{eq:t1_var} we get an equation for $\delta\gtl:=\gtl_1-\gtl_2$
\[
    -\kappa \totalD_{x_1}^2 \delta\gtl 
        = \del{A\sbr{\gamma_1} - A\sbr{\gamma_2}} \grad (y_1+v) \cdot
        \nu +  A\sbr{\gamma_2} \grad \delta y \cdot \nu . 
\]
Since $\delta\gamma(0)=\delta\gamma(1)=0$ we infer that 
$\delta\gamma'(x_1)=0$  for some $x_1\in (0,1)$ and
$|\delta\gamma|_{W^1_\infty(0,1)} \le |\delta\gamma|_{W^2_\infty(0,1)}$,
whence
\begin{equation}\label{eq:thm_second_a}
 \| \delta\gtl \|_{W^2_\infty(0,1)} = \normSZ{\delta\gtl}2\infty{0,1} 
        \le \kappa^{-1} C_A C_S
            \norm{(\delta\gamma,\delta y)}_{\bbW^{2,2}_{\infty,p}} .
\end{equation}

We next estimate $\delta\ytl:=\ytl_1 - \ytl_2$. In view of \eqref{eq:t2_var} we see that 
\begin{align*}
    - \divg{}\big(A\sbr{\gtl_1} \nabla \delta\ytl \big)
        = \divg{} \big( (A\sbr{\gtl_1}-A\sbr{\gtl_2}) \grad (\ytl_2+v) \big).
\end{align*}
Invoking the a priori estimate \eqref{apriori-div}, we deduce
\begin{align*}
    \normS{\delta\ytl}2p\Omega 
        &\le C_\# \norm{A\sbr{\gtl_1}-A\sbr{\gtl_2}}_{L^\infty(0,1)} 
        |\ytl_2+v|_{W^2_p(\Omega)} \\
        & + C_\# \norm{\divg{}
          \del{A\sbr{\gtl_1}-A\sbr{\gtl_2}}}_{L^\infty(0,1)} 
        |\ytl_2+v|_{W^1_p(\Omega)}.
\end{align*}
Since $\normS{\ytl_2}2p\Omega \le 2C_A C_\# \normS{v}2p\Omega$, we infer that 
\begin{align*}
    \normS{\delta\ytl}2p\Omega 
        \le 2C_A C_\# \big(1+2C_A C_\# \big) \normS{v}2p\Omega 
        \normS{\delta\gtl}2\infty{0,1},
\end{align*}   
and, applying \eqref{eq:thm_second_a}, we obtain
\begin{align}\label{eq:thm_second_b}
    \normS{\delta\ytl}2p\Omega \le
        2 \kappa^{-1} C_A^2 C_S
        C_\# \Big(1+2C_A C_\# \big)\normS{v}2p\Omega
        \norm{(\delta\gamma,\delta y)}_{\bbW^{2,2}_{\infty,p}}.
\end{align}
The definition of $\bbW^{2,2}_{\infty,p}$ norm, together with
\eqref{eq:thm_second_a} and \eqref{eq:thm_second_b}, leads to
\begin{align*}
    \norm{(\delta\gtl,\delta\ytl)}_{\bbW^{2,2}_{\infty,p}}
    \le \kappa^{-1}C_AC_S \big(1+2C_A C_\# \big)^2
    \normS{v}2p\Omega \norm{(\delta\gamma,\delta y)}_{\bbW^{2,2}_{\infty,p}},
\end{align*}
and \eqref{eq:state_contraction} gives
$\norm{(\delta\gtl,\delta\ytl)}_{\bbW^{2,2}_{\infty,p}}\le \theta_2
\norm{(\delta\gamma,\delta y)}_{\bbW^{2,2}_{\infty,p}}$,
which is the assertion.
\end{proof}
%

\subsection{Adjoint Equations}
We begin by assuming that $\del{\gop, \yop}$ belongs to
$\bbB_2$ and rewriting the adjoint equations
\eqref{eq:lin_rop_sop} in \emph{strong divergence} form in $\Omega$ 
\begin{subequations}\label{eq_adjoint}
\begin{equation}\label{eq:r_adjoint_strong}
  -\divg{} \big(A\sbr{\bar\gamma} \nabla \rop \big)
        = \HA{\mu}\delta\yop \del{1+\gop},
\end{equation}
and in $(0,1)$
  \begin{equation}\label{eq:s_adjoint_strong}
  \begin{aligned}
     -\kappa \totalD^2_{x_1}\sop 
	& = \delta\gop + \frac{\HA{\mu}}{2}\int_0^1 \abs{\delta\yop}^2 \dif x_2 
				\\
	&- \int_0^1 A_1\sbr{\gop}\grad\del{\yop + v} \cdot \grad \rop\dif x_2 
         + \totalD_{x_1} \int_0^1 A_2\sbr{\gop}\grad\del{\yop+v} 
           \cdot \grad \rop\dif x_2
  \end{aligned}
  \end{equation}
\end{subequations}
together with the boundary conditions $\rop = 0$ on $\Sigma$, $\rop = \sop$ on $\Gamma$, and $\sop(0) = \sop(1) = 0$.

\begin{thm}[second-order regularity of adjoint variables]
The solution $\del{\sop, \rop}$ to \eqref{eq_adjoint} satisfies 
$\del{\sop, \rop-E\sop}\in \bbW^{2,2}_{1,q}$ along with 
the following a-priori estimates
    \begin{align*}
        \normS{\sop}21{0,1} + \normS{\rop}2q\Omega
            \lesssim
                \normLo{ \delta\gop }{0,1}
                + \normLt{ \delta\yop }\Omega^2
                + \normL{\delta\yop }q\Omega,
    \end{align*}
provided the function $v$ satisfies
\begin{equation}\label{small-v}
4 C_A C_E (1+ 2C_\# ) \Big(2 (1+2C_A C_\# ) + \alpha 
(1+\beta C_A) \Big)
\|v\|_{W^2_p(\Omega)} \le 1.
\end{equation}
\end{thm}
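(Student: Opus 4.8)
The plan is to view the adjoint system \eqref{eq_adjoint} as a \emph{coupled linear} problem in $(\sop,\rop)$ and to solve it by a Neumann/fixed-point iteration whose linear self-map has norm controlled by $\normS v2p\Omega$, in complete analogy with the state system in Theorem~\ref{thm:second_order_state}. Because $(\gop,\yop)\in\bbB_2$, the coefficient $A\sbr{\gop}$ lies in $W^1_\infty(\Omega)$ and $\normS{\yop+v}2p\Omega\le(1+2C_AC_\#)\normS v2p\Omega\lesssim\normS v2p\Omega$, so Lemma~\ref{L:2-reg} applies to \eqref{eq:r_adjoint_strong}. I first homogenize the boundary condition $\rop=\sop$ on $\Gamma$ by setting $w:=\rop-E\sop\in\sobZ1q\Omega$, where $E$ is an extension that is stable both as $\sob11{0,1}\to\sob1q\Omega$ (as in \eqref{eq:w1_wq_ext}) and as $\sob21{0,1}\to\sob2q\Omega$; the latter is legitimate because $\sop\in\sob21{0,1}\hookrightarrow W^{2-1/q}_q(0,1)$ for $1<q<2$, which is exactly the trace space of $\sob2q\Omega$.

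Given $\sop\in\sob21{0,1}$, the function $w$ solves $-\divg{}\big(A\sbr{\gop}\grad w\big)=\HA{\mu}\delta\yop(1+\gop)+\divg{}\big(A\sbr{\gop}\grad E\sop\big)$ with $w\in\sobZ1q\Omega$, so Lemma~\ref{L:2-reg} yields $w\in\sob2q\Omega$ and
\[
   \normS{w}2q\Omega\lesssim\normL{\delta\yop}q\Omega+\normS{\sop}21{0,1},
\]
the first term coming from $\HA{\mu}\delta\yop(1+\gop)$ (using $\norm{1+\gop}_{\Linf\Omega}\le C_A$) and the second from the stability of $E$. Hence $\rop=w+E\sop$ obeys $\normS{\rop}2q\Omega\lesssim\normL{\delta\yop}q\Omega+\normS{\sop}21{0,1}$.

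With $\rop$ at hand I integrate the ODE \eqref{eq:s_adjoint_strong} twice using $\sop(0)=\sop(1)=0$, so that $\normS{\sop}21{0,1}$ is bounded by the $\Lone{0,1}$ norm of its right-hand side. The sources $\delta\gop$ and $\frac{\HA{\mu}}{2}\int_0^1\abs{\delta\yop}^2\dif x_2$ produce $\normLo{\delta\gop}{0,1}$ and $\normLt{\delta\yop}\Omega^2$. For the two coupling terms I use H\"older's inequality on $\Omega$: since $p>2$ and $1<q<2$ are conjugate-matched, $\grad(\yop+v)\in W^1_p\hookrightarrow\Linf\Omega$ and $D^2(\yop+v)\in\Lp\Omega$, while $\grad\rop\in W^1_q\hookrightarrow L^{p'}(\Omega)$ and $D^2\rop\in\Lq\Omega$, so both integrands and their $x_1$-derivatives lie in $\Lone\Omega$ with
\[
   \Big\|\textstyle\int_0^1 A_i\sbr{\gop}\grad(\yop+v)\cdot\grad\rop\,\dif x_2\Big\|_{\sob11{0,1}}\lesssim C_A\normS{\yop+v}2p\Omega\normS{\rop}2q\Omega\lesssim\normS v2p\Omega\normS{\rop}2q\Omega.
\]
Therefore $\normS{\sop}21{0,1}\lesssim\normLo{\delta\gop}{0,1}+\normLt{\delta\yop}\Omega^2+\normS v2p\Omega\normS{\rop}2q\Omega$.

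Closing the loop, I substitute the bound for $\rop$ into that for $\sop$; the $\sop$-dependent contribution then carries the factor $\normS v2p\Omega$, and the smallness condition \eqref{small-v} makes this factor strictly less than one. Consequently the linear iteration is a contraction, which gives existence of $(\sop,\rop)$, lets me absorb $\normS{\sop}21{0,1}$ on the left, and produces the asserted a-priori estimate; uniqueness is already granted by \cite[Lemma 5.6]{HAntil_RHNochetto_PSodre_2014a}, and $(\sop,\rop-E\sop)\in\bbW^{2,2}_{1,q}$ follows from the two regularity steps. The main obstacle is the transport-type coupling term $\totalD_{x_1}\int_0^1 A_2\sbr{\gop}\grad(\yop+v)\cdot\grad\rop\,\dif x_2$ in \eqref{eq:s_adjoint_strong}: bounding its $\Lone{0,1}$ norm forces the \emph{full} $\sob2q\Omega$ regularity of $\rop$ and a precise matching of the conjugate exponents $p>2$ and $1<q<2$, and it is exactly the magnitude of this term---measured through $\normS v2p\Omega$---that \eqref{small-v} must tame for the contraction to close.
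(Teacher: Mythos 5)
Your proposal is correct and follows essentially the same route as the paper: homogenize via $\rop = \rop_0 + E\sop$ using the $\sob21{0,1}\to\sob2q\Omega$ extension, apply Lemma~\ref{L:2-reg} to bound $\normS{\rop_0}2q\Omega$ by $\normS{\sop}21{0,1}$ and $\normL{\delta\yop}q\Omega$, bound $\sop$ through the ODE \eqref{eq:s_adjoint_strong} with the coupling terms carrying the factor $\normS{v}2p\Omega$ (since $(\gop,\yop)\in\bbB_2$), and close by absorption under \eqref{small-v}. The only minor differences are presentational: you frame the closing step as a contraction (which also re-proves existence) and bound the full $W^2_1$-norm of $\sop$ by integrating the ODE twice with the zero boundary conditions, whereas the paper estimates the seminorm $\normSZ{\sop}21{0,1}$ and controls the lower-order part by invoking the first-order estimate of \cite[Lemma 5.5]{HAntil_RHNochetto_PSodre_2014a}.
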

\begin{proof}
Setting $\rop = \rop_0 + E\sop$, where $E:\sob{2}{1}{0,1} \cap
  \sobZ{1}{1}{0,1} \rightarrow \sob{2}{q}{\Omega}$ is the extension
  operator for  $q < 2$, we can rewrite \eqref{eq:r_adjoint_strong} as 
\begin{equation*}
     -\divg{} \big(A\sbr{\bar\gamma} \nabla\rop_0 \big) =
        \divg{}\big( A\sbr{\bar\gamma} \grad E\sop \big)
        + \HA{\mu}\delta\yop \del{1+\gop},
  \end{equation*}
with $\rop_0\restriction_{\bdy\Omega} = 0$.   
We apply \eqref{apriori-div} to obtain $\rop_0\in W^2_q(\Omega)$ and
   \begin{align*}
        \normS{\rop_0}2q\Omega 
           \le 2 C_\# C_E \normS{\sop}{2}{1}{0,1}
            + 2 C_\# \HA{\mu} \normL{\delta\yop}q\Omega,
    \end{align*}   
and similarly for $\sop$
    \begin{align*}
        \normSZ{\sop}21{0,1}
            \le
            \normLo{\delta\gop}{0,1} 
            + \frac{\HA{\mu}}2 \normLt{\delta\yop}\Omega^2 
            + 4 C_A \|\yop + v\|_{W^2_p(\Omega)} \|\rop\|_{W^2_q(\Omega)}.
    \end{align*}  
Using the fact $\yop \in \bbB_2$ we deduce               
    \begin{align*}
        \normSZ{\sop}21{0,1}
            \le
            \normLo{\delta\gop}{0,1} 
            + \frac{\HA{\mu}}2 \normLt{\delta\yop}\Omega^2
            + 4 C_A \big(1+2C_A C_\# \big)\normS{v}2p\Omega\normS{\rop}2q\Omega.
    \end{align*}
Recalling the estimate for $\normSZ{\sop}11{0,1}$ from 
\cite[Lemma 5.5]{HAntil_RHNochetto_PSodre_2014a}
\[
    \normSZ{\sop}11{0,1} \le \alpha \normLo{\delta\gop}{0,1} 
            + \frac{\alpha\HA{\mu}}{2}\normLt{\delta\yop}\Omega^2 
            + \alpha C_A \big(1+\beta C_A\big)
            \normSZ{v}1p\Omega\normSZ{\rop}1q\Omega ,
\]
and using that $\|\sop\|_{L^1(0,1)} \le \normSZ{\sop}11{0,1}$, we end
up with
\begin{align*}
    \normS{\sop}21{0,1} 
        \le
            (1+2\alpha) \normLo{\delta\gop}{0,1} 
            + \frac{\HA{\mu}}2 (1+2\alpha)\normLt{\delta\yop}\Omega^2
            + \lambda
            \normS{v}2p\Omega\normS{\rop}2q\Omega ,
\end{align*} 
where $\lambda = 2C_A\big(2 (1+2 C_A C_\# ) + \alpha (1+\beta C_A)\big)$.
Inserting the estimate for $\normS{\rop_0}2q\Omega$ into this
estimate, we get
\begin{align*}
    \normS{\sop}21{0,1} 
        &\le
            (1+2\alpha) \normLo{\delta\gop}{0,1} 
            + \frac{\HA{\mu}}2 (1+2\alpha)\normLt{\delta\yop}\Omega^2
            \\
            & + \lambda C_E(1+ 2C_\#)
            \normS{v}2p\Omega \normS{\sop}21\Omega 
            + 2\HA{\mu}\lambda C_\#\normS{v}2p\Omega \|\delta\yop\|_{L^q(\Omega)}
\end{align*} 
and the desired estimate for $\normS{\sop}21{0,1}$ follows from
\eqref{small-v}. This, and the relation $\rop=\rop_0+ E\sop$ yields
the remaining estimate for $\|\rop\|_{W^2_q(\Omega)}$, and concludes
the proof.
\end{proof}

\section{Discrete Optimal Control Problem} \label{s:oc_disc_fbp}
The goal of this section is to introduce the discrete counterpart of
the optimization problem \eqref{eq:ocfbp}. The discretization uses
the finite element method and is classical.

Let $\calT$ denote a geometrically conforming rectangular quasi-uniform triangulation of the fixed
domain $\Omega$ such that $\overline{\Omega} = \cup_{K \in \calT} K$ and $h \approx h_K$ be the meshsize of
$\calT$.  Additionally, let 
$0 = \zeta_0 < \zeta_1 < \ldots < \zeta_{M+1}=1$ be a partition of
$[0,1]$ with nodes $\zeta_i$ compatible with $\calT$. Consider the following finite dimensional spaces,  where the capital letters stand for discrete objects:
\begin{subequations}  \label{eq:disc_spaces}
	\begin{align}
		\bbV_h &:= \left\{ Y \in C^0(\bar\Omega) : Y\restriction_{K} \in \mathcal{P}^1(K), K \in \calT  \right\}, \\
		\mathring{\bbV}_h &:= \bbV_h \cap \sobZ{1}{p}{\Omega}, \\
   		\bbS_h &:= \Big\{ G \in C^0([0,1]) : G\restriction_{\sbr{\zeta_{i},\zeta_{i+1}}} 
        	\in \mathcal{P}^1(\sbr{\zeta_{i},\zeta_{i+1}}), \ 0 \le i \le M   \Big\} , \\
		\mathring{\bbS}_h &:= \bbS_h \cap \sobZ{1}{\infty}{0,1},\\
	\bbU_{ad} &:= \bbS_h \cap \Uad ,
\end{align}
\end{subequations}
and $\calP^1(D)$ stands for bilinear polynomials on an element 
$D=K \in \calT$ or linear polynomials on an interval $D=\intcc{\zeta_i, \zeta_{i+1}}$.
The spaces $\mathring\bbV_h, \mathring\bbS_h$ and $\bbU_{ad}$ in
\eqref{eq:disc_spaces} will be used to approximate the continuous
solutions $(y,\gamma,u)$ of \eqref{eq:ocfbp}. 
This discretization is classical \cite[Chapter
3]{SCBrenner_RLScott_2008a}, except perhaps for 
the $L^2$ constraint in $\bbU_{ad}$, which we enforce by
scaling the functions with their $L^2$-norm; for more details we refer
to Section~\ref{s:oc_numerics_fbp}.

Next we present a discrete analog of the continuous extension \eqref{eq:w1_wq_ext}, namely 
	\[
		E_h G := ({\cal S}_h \circ E) ( G ), \quad \forall G \in \mathring{\bbS}_h.
	\]
The caveat is that functions in $\sob 1 q \Omega$ are not necessarily
continuous. This issue is addressed by utilizing the Scott-Zhang
interpolation operator ${\cal S}_h:\sob1q\Omega \to \bbV_h$. This operator satisfies the optimal estimate \cite{SCBrenner_RLScott_2008a},
	\begin{align}  \label{eq:inter_est_ext}
		\normSZ{w - {\cal S}_h w}{1}{q}{\Omega} &\lesssim  h \normSZ{w}{2}{q}{\Omega},\, \forall w \in \sob2q\Omega,\, 1 \le q \le \infty.
	\end{align}

For functions in $\sob 1p\Omega$ with $p > 2$, $\sob1\infty{0,1}$ and
$\sob11{0,1}$ we will use the standard Lagrange interpolation
  operator $\calI_h$. This is justified by the Sobolev embedding theorems, i.e. we can identify functions in those spaces with their continuous equivalents. Moreover, the following optimal interpolation estimates hold, 
	\begin{subequations} \begin{align}\label{eq:inter_est_soln_a} 
		\normSZ{y - \calI_h y}1p\Omega &\lesssim  h \normSZ{y}2p\Omega, 
			\, \forall y \in \sob2p\Omega, \quad  2 < p
                        \le \infty, \\
                        \label{eq:inter_est_soln_b} 
		\normSZ{\gamma - \calI_h \gamma}1p{0,1} &\lesssim  h \normSZ{\gamma}2p{0,1},
			\, \forall \gamma \in \sob2p{0,1},\quad 1 \leq p \leq \infty.
	\end{align}\end{subequations}

Next we state the discrete counterpart of the optimal control
problem \eqref{eq:lin_cost} in its variational form: if 
$\delta G := G -\gamma_d$, $\delta Y := Y +v-y_d$, then minimize
\begin{subequations}
	\begin{equation}\label{eq:disc_lin_cost}
	  \calJ_h(G,Y,U) := \frac{1}{2}\normLt{\delta G}{0,1}^2 
			+ \frac{\HA{\mu}}{2}\normLt{\delta Y \sqrt{1+G}}{\Omega}^2 
			+ \frac{\lambda}{2}\normLt{U}{0,1}^2 , 
	\end{equation}
    subject to the discrete \emph{state equation} $\del{G,Y}\in \mathring{\bbS}_h \times \mathring{\bbV}_h$
    \begin{equation}\label{eq:disc_state_var_pde} 
		\BG\sbr{G,\Xi} + \BO\sbr{Y + v,Z+E_h\Xi;A\sbr{G}} 
			= \int_0^1 U \Xi \quad \forall (\Xi, Z) \in \mathring{\bbS}_h\times \mathring{\bbV}_h ,
	\end{equation}
    the state constraints
	\begin{align}\label{eq:disc_state_const}		
		\abs{G'} \leq 1  \quad\mbox{on  } \intoo{\zeta_i, \zeta_{i+1}},\ i=0,...,M-1 ,
	\end{align}
	and the control constraints \[ U \in \bbU_{ad}.\]
\end{subequations}

We point out that $Y\restriction_{\bdy\Omega}=0$ in \eqref{eq:disc_state_var_pde}. This is not the standard approach
in finite element literature because it requires knowing an extension of $v$ to $\Omega$; we adopt this approach 
to simplify the exposition.
We must include the following mild regularity assumptions on data in order to obtain an order of convergence:

\medskip
\begin{itemize}
\item[{$\bf (A_3)$}] The given data satisfy
$v\in\sob{2}{p}{\Omega}$, $\gamma_d\in\Ltwo{0,1}$ and $y_d\in\Ltwo{\Omega^*}$.
\end{itemize}

\medskip
Now let $\Uop$ denote the optimal control to \eqref{eq:disc_lin_cost},
whose existence will be shown in 
Theorem~\ref{eq:existence_discrete_control}, and $\del{\Gop,\Yop}$ be the optimal 
state, which satisfy discrete \emph{state equations} in variational form
\eqref{eq:disc_state_var_pde}. 
The discrete \emph{adjoint equations} in variational form read: find
$(\Sop,\Rop)$ such that  
$(\Sop,\Rop-E_h\Sop)\in \mathring{\bbS}_h \times \mathring{\bbV}_h$ 
and for every $(\Xi, Z) \in \mathring\bbS_h \times \mathring{\bbV}_h$, 
	\begin{equation}\label{eq:disc_adj_var_pde} 
		\BG\sbr{\Xi,\Sop} + \DO\sbr{\Xi, Z,\Rop; \Gop, \Yop}
			= \Big\langle \Xi, \delta\Gop +
                        \frac{\HA{\mu}}{2}\int_0^1 \abs{\delta\Yop}^2 \dif x_2 \Big\rangle
                        + \pair{Z, \HA{\mu}\delta\Yop\del{1+\Gop}}.
	\end{equation}	
Finally, the optimal control $\Uop$ satisfies the variational inequality
 	\begin{align} \label{eq:disc_var_ineq}
		\pairLt{ \calJ_h'(\Uop), U - \Uop }{0,1} &\ge 0 \quad
                \forall U \in \bbU_{ad},
	\end{align}
where $\calJ_h'(\Uop) = \Sop +  \lambda \Uop$. Therefore \eqref{eq:disc_var_ineq} reads
	\begin{align}  \label{eq:disc_lin_uop}
		\pairLt{ \Sop +  \lambda \Uop, U - \Uop}{0,1} &\ge 0  \quad \forall U \in \bbU_{ad} .
	\end{align}       

The following discrete estimates mimic the continuous inf-sup
\begin{PHDENABLED}
in \propref{prop:b_infsup}
\end{PHDENABLED}
\begin{PHDDISABLED}
\cite[Proposition 4.1]{HAntil_RHNochetto_PSodre_2014a}.
\end{PHDDISABLED}
\begin{prop}[discrete inf-sup] \label{prop:disc_b_infsup}
The following two statements hold:
\begin{enumerate}[(i)]
\item\label{item:disc_b_infsup_i}
There exists constant $0 < \alpha < \infty$ independent of $h$ such that
	\begin{subequations}
		\begin{align}\label{eq:disc_b1_infsup_a}
			\normSZ{G}{1}{\infty}{0,1} &\leq \alpha \sup_{0\neq \Xi\in \mathring{\bbS}_h}
				\frac{\BG\sbr{G,\Xi}}{\normSZ{\Xi}{1}{1}{0,1}} ,  \\
			\label{eq:disc_b1_infsup_b}
			\normSZ{S}{1}{1}{0,1} &\leq \alpha \sup_{0\neq \Xi \in \mathring{\bbS}_h}
				\frac{\BG\sbr{\Xi,S}}{\normSZ{\Xi}{1}{\infty}{0,1}} . 
		\end{align}
	\end{subequations}
\item\label{item:disc_b_infsup_ii}
There exists constant $0 < \beta < \infty$ independent of $h$ and
constants $Q < 2 < P$, $h_0 > 0$, such that for $p \in \sbr{Q,P}$ and $0 < h \le h_0$
	     \begin{align}\label{eq:disc_b2_infsup}
		  \normSZ{Y}{1}{p}{\Omega} &\leq \beta \sup_{0\neq Z \in \mathring{\bbV}_h}
		           \frac{\BO\sbr{Y,Z;A\sbr{G}}}{\normSZ{Z}{1}{q}{\Omega}}.
              \end{align}
\end{enumerate}
\end{prop}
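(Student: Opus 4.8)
The plan is to treat the two statements separately, since they involve very different bilinear forms. For the first statement I would exploit that $\BG\sbr{\cdot,\cdot}$ is (up to the factor $\kappa$) the one-dimensional Dirichlet form and that every argument is piecewise linear, so that $G'$, $\Xi'$, $S'$ are piecewise constant on the same partition and the estimates become discrete realizations of the $L^\infty$--$L^1$ duality. For \eqref{eq:disc_b1_infsup_a} I would note that $\int_0^1 G' = G(1)-G(0)=0$, so the extremizer of $\sup\{\int_0^1 G'\phi : \int_0^1\phi=0,\ \|\phi\|_{L^1(0,1)}\le 1\}$ is $\phi = \tfrac12\del{|I_a|^{-1}\mathbf{1}_{I_a}-|I_b|^{-1}\mathbf{1}_{I_b}}$, where $I_a,I_b$ are the subintervals where $G'$ attains its max and min; this $\phi$ is mean-zero and piecewise constant, hence $\phi=\Xi'$ for an admissible $\Xi\in\mathring{\bbS}_h$, and the mean-zero property upgrades the oscillation $\tfrac12(\max G'-\min G')$ to $\normSZ{G}{1}{\infty}{0,1}$. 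For \eqref{eq:disc_b1_infsup_b} the dual computation gives $\sup=\kappa\inf_c\|S'-c\|_{L^1(0,1)}$, attained at the piecewise-constant $\Xi'=\sgn(S'-c^\ast)$ with $c^\ast$ a weighted median of $S'$ (again admissible in $\mathring{\bbS}_h$); since $\int_0^1 S'=0$ one has $|c^\ast|\le\|S'-c^\ast\|_{L^1(0,1)}$ and hence $\normSZ{S}{1}{1}{0,1}\le 2\inf_c\|S'-c\|_{L^1(0,1)}$. Both constants are $h$-independent (one may take $\alpha=2/\kappa$) precisely because the continuous extremizers already live in the discrete space.

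For the second statement I would first dispose of $p=2$ and then bootstrap. The matrix $A\sbr{G}$ is symmetric and, for $G$ in the admissible set (where $|G'|\le 1$ and $G$ is bounded), uniformly positive definite with ellipticity constants controlled only through these bounds; hence $\BO\sbr{\cdot,\cdot;A\sbr{G}}$ is coercive on $\mathring{\bbV}_h$ in the $H^1$ seminorm uniformly in $h$, and taking $Z=Y$ yields \eqref{eq:disc_b2_infsup} for $p=q=2$ with a mesh-independent $\beta$. Note that this coercivity is unconditional, so it already guarantees that the Galerkin projection $R_h w\in\mathring{\bbV}_h$, defined by $\BO\sbr{R_h w,Z;A\sbr{G}}=\BO\sbr{w,Z;A\sbr{G}}$ for all $Z$, is well defined for every $w$.

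For $p$ in a neighborhood of $2$ I would argue by duality. Given $Y\in\mathring{\bbV}_h$, pick $\Phi\in L^q(\Omega)^2$ with $\|\Phi\|_{L^q(\Omega)}=1$ and $\int_\Omega\nabla Y\cdot\Phi=\normSZ{Y}{1}{p}{\Omega}$, and let $w\in\sobZ{1}{q}{\Omega}$ solve $\BO\sbr{\phi,w;A\sbr{G}}=\int_\Omega\nabla\phi\cdot\Phi$ for all $\phi$; the continuous inf-sup \cite[Proposition 4.1]{HAntil_RHNochetto_PSodre_2014a} gives $\normSZ{w}{1}{q}{\Omega}\lesssim\|\Phi\|_{L^q(\Omega)}$. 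Using the symmetry of $A\sbr{G}$, Galerkin orthogonality yields $\BO\sbr{Y,w-R_h w;A\sbr{G}}=0$, whence
\[
\normSZ{Y}{1}{p}{\Omega}=\BO\sbr{Y,w;A\sbr{G}}=\BO\sbr{Y,R_h w;A\sbr{G}}\le\Big(\sup_{0\neq Z\in\mathring{\bbV}_h}\frac{\BO\sbr{Y,Z;A\sbr{G}}}{\normSZ{Z}{1}{q}{\Omega}}\Big)\normSZ{R_h w}{1}{q}{\Omega}.
\]
It therefore remains only to prove the global $W^1_q$-stability $\normSZ{R_h w}{1}{q}{\Omega}\lesssim\normSZ{w}{1}{q}{\Omega}$ of the Galerkin projection, uniformly in $h$, for the conjugate exponent $q$ in a neighborhood of $2$.

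This last stability estimate is the crux and the main obstacle. I expect to establish it by the perturbation technique of Saavedra--Scott \cite{PSaavedra_RScott_1991}: starting from the $p=2$ stability one localizes and, using inverse inequalities on the quasi-uniform mesh, the Scott--Zhang approximation estimate \eqref{eq:inter_est_ext}, and a super-approximation argument, absorbs the off-diagonal contributions into the leading term. The essential structural fact here is that the discontinuities of the rough coefficient $A\sbr{G}$ are aligned with the mesh (because $G\in\mathring{\bbS}_h$, so $G'$ jumps only across the vertical lines $x_1=\zeta_i$), which is exactly what allows the finite element space to resolve the coefficient and keeps the perturbation terms controllable; this is also why I would not invoke the $\sob{2}{p}{}$-regularity of \lemref{L:2-reg} here, since the discrete coefficient is merely in $L^\infty(\Omega)$ rather than $\sob{1}{\infty}{\Omega}$. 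The admissible window $Q<2<P$ and the threshold $h_0$ both emerge from this step: the distance from $2$ that the exponent may stray is dictated by the oscillation of $A\sbr{G}$, and the smallness of $h$ is needed to absorb the coefficient-variation and lower-order contributions into the dominant coercive term.
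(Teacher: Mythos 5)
The paper's own proof of this proposition is pure citation: \eqref{eq:disc_b1_infsup_a} is quoted from \cite[Proposition 3.2]{PSaavedra_RScott_1991}, \eqref{eq:disc_b2_infsup} from \cite[Proposition 8.6.2]{SCBrenner_RLScott_2008a}, and \eqref{eq:disc_b1_infsup_b} is asserted to follow from the technique of \cite{PSaavedra_RScott_1991}. Your treatment of item (i) is therefore a genuinely different route, and it is complete and correct: since $G'$, $S'$ and $\Xi'$ are piecewise constant with zero mean on the same partition, both inequalities are exact discrete $L^\infty$--$L^1$ dualities; your extremizers (the normalized two-bump function, and the median-corrected sign function) are derivatives of admissible elements of $\mathring{\bbS}_h$, so the discrete supremum loses nothing against the continuous one, and the zero-mean constraints convert oscillation into norm (via $\max G'\ge 0\ge \min G'$, and $|c^\ast|\le \|S'-c^\ast\|_{L^1(0,1)}$). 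You even recover $\alpha=2/\kappa$, the constant the paper attributes to \cite[Proposition 2.2]{PSaavedra_RScott_1991}. This buys a transparent, quantitative proof where the paper only points to the literature.

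Item (ii) is where there is a gap. Your reduction is correct and standard: uniform coercivity at $p=2$ holds uniformly over the admissible set (indeed $\det A\sbr{G}\equiv 1$, and the trace of $A\sbr{G}$ is bounded because \eqref{eq:disc_state_const} together with $G(0)=G(1)=0$ gives $|G|\le 1/2$), and the duality identity shows that \eqref{eq:disc_b2_infsup} follows from (in fact is equivalent to, by symmetry of $A\sbr{G}$) uniform $W^1_q$-stability of the Galerkin projection $R_h$. But that stability statement is exactly the content of the result the paper cites, so after the reduction you are back where the paper starts, and the mechanism you propose for proving it is not the right one. For exponents near $2$ and coefficients that are merely uniformly elliptic and $L^\infty$, the standard argument is a discrete Meyers perturbation: write $\lambda_0\int_\Omega \nabla Y\cdot\nabla Z = F(Z)-\int_\Omega \del{A\sbr{G}-\lambda_0 I}\nabla Y\cdot\nabla Z$ and run a Neumann series in $W^1_p$, using that the $W^1_p$-stability constant $C_p$ of the discrete Laplacian on quasi-uniform meshes (obtained from the $W^1_\infty$ theory of \cite{SCBrenner_RLScott_2008a} by interpolation with $p=2$ and duality) satisfies $C_p\to 1$ as $p\to 2$; the window $\sbr{Q,P}$ is then dictated solely by the ellipticity contrast of $A\sbr{G}$, and $h_0$ comes from the discrete Laplacian theory. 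Mesh alignment of the discontinuities of $A\sbr{G}$ plays no role here, and it is fortunate that it does not: the localization/inverse-estimate/super-approximation route you sketch is the weighted-norm machinery of the $W^1_\infty$ theory, which genuinely requires coefficient regularity (the number of jump lines of $A\sbr{G}$ grows as $h\to 0$, so no uniform smoothness is available) and is in any case unnecessary when $p$ stays close to $2$. So keep item (i) and the duality reduction in item (ii), but replace the sketched stability proof by the perturbation argument, or simply by the citation the paper uses.
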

\begin{proof}
We refer to \cite[Proposition 3.2]{PSaavedra_RScott_1991} for 
a proof of \eqref{eq:disc_b1_infsup_a} and to
\cite[Proposition 8.6.2]{SCBrenner_RLScott_2008a} 
for a proof of \eqref{eq:disc_b2_infsup}.
The technique of \cite{PSaavedra_RScott_1991} extends to
\eqref{eq:disc_b1_infsup_b}.
\end{proof}

\HA{The constant $\alpha$ is equal to $2/\kappa$, see \cite[Proposition 2.2]{PSaavedra_RScott_1991}.}

Existence and uniqueness of solutions to the \emph{state} and
\emph{adjoint equations} can be shown similarly to the continuous case 
\begin{PHDENABLED}
Corollary~\ref{cor:g_operator} and \thmref{thm:t_adj_contraction}
\end{PHDENABLED}
\begin{PHDDISABLED}
\cite[Corollary~4.6, and Theorem~5.6]{HAntil_RHNochetto_PSodre_2014a} 
\end{PHDDISABLED}
provided $U \in \bbU_{ad}$ and $\normSZ{v}{1}{p}{\Omega}$ is
small. We will next prove
existence of an optimal control $\Uop$ solving \eqref{eq:disc_lin_cost}.

    \begin{thm}[existence of optimal control]\label{eq:existence_discrete_control}
        There exists a discrete optimal control $\Uop\in\bbU_{ad}$ which solves
        \eqref{eq:disc_lin_cost}. 
    \end{thm}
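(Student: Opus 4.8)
The plan is to apply the direct method of the calculus of variations, taking full advantage of the fact that the admissible set $\bbU_{ad}=\bbS_h\cap\Uad$ is finite dimensional. First I would record the elementary properties of $\bbU_{ad}$: it is nonempty since $0\in\bbU_{ad}$, and it is convex, closed and bounded in $\Ltwo{0,1}$ as the intersection of the subspace $\bbS_h$ with the closed ball $\Uad$. Because $\bbS_h$ is finite dimensional, all norms on it are equivalent and $\bbU_{ad}$ is in fact compact, with weak and strong $L^2$-convergence coinciding on it.

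Next I would set up the reduced problem. Invoking the asserted well-posedness of the discrete state equation \eqref{eq:disc_state_var_pde} for $U\in\bbU_{ad}$ and $\normSZ{v}{1}{p}{\Omega}$ small, each control $U$ determines a unique discrete state $(G,Y)$, giving a discrete control-to-state map $U\mapsto(G,Y)$ and a reduced cost $U\mapsto\calJ_h(G,Y,U)$. The state constraint \eqref{eq:disc_state_const} need not be carried along separately: exactly as in Lemma~\ref{lem:B2_to_B2}, the fixed-point operator producing $(G,Y)$ returns $\normSZ{G}{1}{\infty}{0,1}\le1$, so every state in the range of the map is automatically feasible, and $\bbU_{ad}$ is the only effective constraint.

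I would then take a minimizing sequence $\{U_n\}\subset\bbU_{ad}$, with associated states $(G_n,Y_n)$, such that $\calJ_h(G_n,Y_n,U_n)$ tends to the infimum $m$ of the reduced cost over $\bbU_{ad}$ (which is finite, being nonnegative and bounded above by the value at $U=0$). By compactness of $\bbU_{ad}$ I extract a subsequence (not relabeled) with $U_n\to\Uop$ strongly in $\Ltwo{0,1}$, and $\Uop\in\bbU_{ad}$ by closedness. To pass to the limit I must show the states converge, $(G_n,Y_n)\to(\Gop,\Yop)$, where $(\Gop,\Yop)$ is the state associated with $\Uop$. Granting this, each of the three terms of $\calJ_h$ in \eqref{eq:disc_lin_cost} is a continuous function of its arguments for these strong convergences, so $\calJ_h(\Gop,\Yop,\Uop)=\lim_n\calJ_h(G_n,Y_n,U_n)=m$; hence $\Uop$ is a minimizer and the proof is complete.

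The main obstacle is precisely the continuity of the discrete control-to-state map. I would obtain it from the very contraction estimate that underlies well-posedness: subtracting the equations \eqref{eq:disc_state_var_pde} for $U_n$ and $\Uop$, the discrete inf-sup bounds of Proposition~\ref{prop:disc_b_infsup} control $G_n-\Gop$ and $Y_n-\Yop$ in the discrete $\bbW^{1,1}_{\infty,p}$ norm, while the Lipschitz dependence of $A[\cdot]$ on $G$ handles the nonlinear coefficient. Once $\normSZ{v}{1}{p}{\Omega}$ is small enough for the fixed-point operator to be a contraction, this yields a Lipschitz bound of the form $\norm{G_n-\Gop}+\norm{Y_n-\Yop}\lesssim\normLt{U_n-\Uop}{0,1}$, so that strong convergence of the controls forces strong convergence of the states, as required.
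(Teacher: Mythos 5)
Your proposal is correct and follows essentially the same route as the paper: a minimizing sequence argument in which the finite dimensionality of $\bbU_{ad}$ upgrades weak to strong convergence of the controls, combined with Lipschitz continuity of the discrete control-to-state map (proved via the discrete inf-sup and contraction estimates, mirroring the continuous case) to obtain strong convergence of the associated states and pass to the limit in the cost. The paper's proof is just a terser version of exactly this argument.
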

    \begin{proof}
        The proof follows by using a minimizing sequence argument
        similar to the continuous proof 
        \cite[Theorem 5.1]{HAntil_RHNochetto_PSodre_2014a}.
        However, weak convergence of a minimizing sequence $\{U_n\}$
        yields strong convergence in finite dimensional spaces.
        Following \cite[Theorem 4.8]{HAntil_RHNochetto_PSodre_2014a} it is routine to show
        that the discrete control-to-state map is Lipschitz continuous. Together with this 
        Lipschitz continuity and the strong convergence of $U_n$, 
      we also obtain strong convergence of the associated state sequence $\{\del{G_n, Y_n}\}$. 
    \end{proof}

\section{A-priori Error Estimates: State and Adjoint Variables} 
\label{s:oc_disc_apriori}
The goal of this section is to derive a-priori error estimates between the continuous and discrete solutions of the \emph{state} and \emph{adjoint}
equations for given functions $u \in \Uad$ and $U \in \bbU_{ad}$. This is the content of Lemmas \ref{lem:gamma_apriori_1} through \ref{cor:s_apriori}.
These estimates are the stepping stone for the $L^2$
estimate of $\uop-\Uop$ in Theorem~\ref{thm:control_apriori_estimate}.

\begin{PHDENABLED}
First we rewrite the adjoint equations in a compact but equivalent form,
$\del{\sop, \rop}$ in $\sobZ{1}{1}{0,1} \times \sobZ{1}{q}{\Omega}$ satisfies the \emph{adjoint equation} in variational form
	\begin{align}\label{eq:lin_rop_sop} 
		\BG\sbr{\xi, \sop} + \DO\sbr{\xi, z, \rop + E\sop; \gop, \yop} 
				=\pair{\xi, \gop - \gamma_d} + \pair{z, \yop + v - y_d},
	\end{align}
for all $\del{\xi, z}$ in $\sob{1}{\infty}{0,1} \times \sob{1}{p}{\Omega}$ with $\del{\gop,\yop}$ set to $\del{\gamma(\uop), y(\uop)}$.
The parametrized form 
	\begin{equation}
		\DO\sbr{\xi, z, r; \gop, \yop} := \BO\sbr{z, r;A\sbr{\gop}} + \BO\sbr{\yop + v, r;\Dif A\sbr\gop \difdir\xi}.
	\end{equation}
$\DO$ was introduced while analysing the Fr\'echet differentiability of the control-to-state map \secref{s:g_differentiable}.
Moreover, the duality pairings on the right-hand-side of \eqref{eq:lin_rop_sop} are reduced to standard integrals due to the $L^2$-regularity imposed by the cost functional.

\end{PHDENABLED}

\begin{lem}[preliminary error estimate for $\gamma$]\label{lem:gamma_apriori_1} 
Given $u\in\Uad$ and $U\in\bbU_{ad}$, 
let $\del{\gamma,y}$ and $\del{G,Y}$ solve \eqref{eq:lin_st_const} and
\eqref{eq:disc_state_var_pde} respectively for $v \in \sob{2}{p}{\Omega}$ with 
$\normSZ{v}1p\Omega$ small. 
Then the following  error estimate for $\gamma - G$ holds 
   \begin{align*}
   	\normSZ{\gamma - G}1\infty{0,1}
  		&\lesssim h\normSZ{\gamma}2\infty{0,1} +
  			 \normSZ{y-Y}1p\Omega + \normLt{u-U}{0,1}.
   \end{align*}
\end{lem}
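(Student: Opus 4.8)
The plan is to compare $G$ with the Lagrange interpolant $\calI_h\gamma\in\mathring{\bbS}_h$ and to drive the discrete error through the discrete inf--sup bound \eqref{eq:disc_b1_infsup_a}. Splitting $\normSZ{\gamma-G}1\infty{0,1}\le\normSZ{\gamma-\calI_h\gamma}1\infty{0,1}+\normSZ{\calI_h\gamma-G}1\infty{0,1}$, the first term is $\lesssim h\normSZ{\gamma}2\infty{0,1}$ by the interpolation estimate \eqref{eq:inter_est_soln_b}. For the second term I apply \eqref{eq:disc_b1_infsup_a} to $\calI_h\gamma-G\in\mathring{\bbS}_h$ and then use the orthogonality $\BG\sbr{\calI_h\gamma-\gamma,\Xi}=\kappa\int_0^1(\calI_h\gamma-\gamma)'\Xi'\dif x_1=0$ for all $\Xi\in\mathring{\bbS}_h$, which holds because $\Xi'$ is piecewise constant and $\calI_h\gamma$ agrees with $\gamma$ at every node. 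This reduces the task to bounding $\BG\sbr{\gamma-G,\Xi}$ by $\normSZ{\Xi}11{0,1}$ times the desired right-hand side.

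Next I would produce an identity for $\BG\sbr{\gamma-G,\Xi}$ by subtracting \eqref{eq:disc_state_var_pde} from \eqref{eq:lin_st_const}. The one subtlety is that the two state equations carry different extensions, $E\Xi$ versus $E_h\Xi$. I reconcile them by testing \eqref{eq:lin_st_const} with the pair $(\Xi,\,Z+(E_h-E)\Xi)$: this is admissible because $E_h\Xi-E\Xi=({\cal S}_h-I)E\Xi$ has vanishing trace on $\bdy\Omega$ (the Scott--Zhang operator reproduces the boundary values of $E\Xi$, namely $\Xi$ on $\Gamma$ and $0$ on $\Sigma$), so it lies in $\sobZ1q\Omega$. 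After this substitution both equations use $E_h\Xi$, and the bulk parts decouple: choosing $\xi=0$ in \eqref{eq:lin_st_const} and $\Xi=0$ in \eqref{eq:disc_state_var_pde} gives $\BO\sbr{y+v,Z;A\sbr{\gamma}}=0$ and $\BO\sbr{Y+v,Z;A\sbr{G}}=0$ for all $Z\in\mathring{\bbV}_h$, so the $Z$-contributions cancel and I obtain
\[
   \BG\sbr{\gamma-G,\Xi}=\int_0^1(u-U)\Xi\dif x_1
   -\BO\sbr{y+v,E_h\Xi;A\sbr{\gamma}}+\BO\sbr{Y+v,E_h\Xi;A\sbr{G}}.
\]

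It remains to bound the three terms after dividing by $\normSZ{\Xi}11{0,1}$. For the control term, Poincar\'e ($\Xi(0)=0$) gives $\abs{\int_0^1(u-U)\Xi}\le\normLt{u-U}{0,1}\normLt{\Xi}{0,1}\le\normLt{u-U}{0,1}\normSZ{\Xi}11{0,1}$. Writing the difference of bulk forms as $\BO\sbr{Y-y,E_h\Xi;A\sbr{G}}+\BO\sbr{y+v,E_h\Xi;A\sbr{G}-A\sbr{\gamma}}$, I bound the first summand by H\"older together with $\norm{A\sbr{G}}_{\Linf{\Omega}}\le C_A$ and the stability $\normSZ{E_h\Xi}1q\Omega=\normSZ{{\cal S}_h E\Xi}1q\Omega\lesssim\normSZ{E\Xi}1q\Omega\le C_E\normSZ{\Xi}11{0,1}$, giving $\lesssim\normSZ{y-Y}1p\Omega\,\normSZ{\Xi}11{0,1}$. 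For the second summand I use the Lipschitz dependence of $A$ on $(\gamma,\gamma')$ in the admissible range, which together with $\gamma(0)=G(0)=0$ yields $\norm{A\sbr{G}-A\sbr{\gamma}}_{\Linf{\Omega}}\lesssim\normSZ{\gamma-G}1\infty{0,1}$, while $\normSZ{y+v}1p\Omega\lesssim\normSZ{v}1p\Omega$ is small because $(\gamma,y)\in\bbB_2$; this produces $\lesssim\normSZ{v}1p\Omega\,\normSZ{\gamma-G}1\infty{0,1}\,\normSZ{\Xi}11{0,1}$.

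Collecting these bounds, taking the supremum over $\Xi$, and adding the interpolation term gives $\normSZ{\gamma-G}1\infty{0,1}\lesssim h\normSZ{\gamma}2\infty{0,1}+\normLt{u-U}{0,1}+\normSZ{y-Y}1p\Omega+C\normSZ{v}1p\Omega\,\normSZ{\gamma-G}1\infty{0,1}$. The claim then follows by absorbing the last term into the left-hand side, which is legitimate precisely because $\normSZ{v}1p\Omega$ is small enough that $C\alpha\normSZ{v}1p\Omega<1$. I expect the main obstacles to be the two technical points that make the identity clean: reconciling the continuous and discrete extensions $E$ and $E_h$ via the vanishing-trace property of ${\cal S}_h-I$, and the final kick-back absorption, which is the place where the smallness hypothesis on $v$ is essential.
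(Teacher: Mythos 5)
Your proof is correct and follows essentially the same route as the paper: triangle inequality with the Lagrange interpolant, the discrete inf-sup \eqref{eq:disc_b1_infsup_a}, subtraction of the two state equations to obtain an identity for $\BG\sbr{\gamma-G,\Xi}$, the same splitting of the bulk terms, and absorption of the $\normSZ{v}1p\Omega\,\normSZ{\gamma-G}1\infty{0,1}$ term via the smallness of $v$. Your two deviations are refinements rather than a different approach: the one-dimensional Galerkin orthogonality $\BG\sbr{\calI_h\gamma-\gamma,\Xi}=0$ replaces the paper's H\"older-plus-interpolation bound for that term, and your explicit reconciliation of $E$ versus $E_h$ (testing \eqref{eq:lin_st_const} with the pair $(\Xi,\,Z+(E_h-E)\Xi)$, using that $({\cal S}_h-I)E\Xi$ has vanishing trace) carefully justifies a step the paper performs implicitly.
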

\begin{proof}
We use the \emph{discrete} inf-sup \eqref{eq:disc_b1_infsup_a}
to infer that
   \begin{align*}
   	\normSZ{\calI_h \gamma - G}1\infty{0,1} \lesssim 
	\sup_{0\neq \Xi\in \mathring{\bbS}_h} \frac{\BG\sbr{\calI_h\gamma-G,
			  \Xi}}{\normSZ{\Xi}11{0,1}} .
   \end{align*}
   Next, we rewrite  $\BG\sbr{\calI_h\gamma-G, \Xi} =
   \BG\sbr{\calI_h\gamma - \gamma, \Xi} + \BG\sbr{\gamma-G,\Xi}$,
and estimate the first
   term using H\"older's inequality and \eqref{eq:inter_est_soln_b}.
For the second term we set $w = y+v$ and $W = Y + v$, use that $\gamma$ and $G$ satisfy \eqref{eq:lin_st_const} and \eqref{eq:disc_state_var_pde} respectively,
   and the fact that $\normSZ{y}{1}{p}{\Omega} \lesssim
   \normSZ{v}{1}{p}{\Omega}$, to obtain
   \begin{align*}
   		\BG\sbr{\gamma-G, \Xi}
   			&= -\BO\sbr{w, E_h\Xi; A\sbr{\gamma}} + \BO\sbr{W, E_h\Xi; A\sbr{G}} +
  				\pair{u-U, \Xi} \\
  			&= \BO\sbr{w, E_h\Xi; -A\sbr{\gamma}+A\sbr{G}} 
  			    + \BO\sbr{W-w, E_h\Xi; A\sbr{G}} +
                            \pair{u-U, \Xi} \\
  			&\lesssim \del{
  			    \normSZ{\gamma - G}1\infty{0,1} \normSZ{v}1p\Omega
  			    + \normSZ{y-Y}1p\Omega
  			    + \normLt{u-U}{0,1} }\normSZ{\Xi}11{0,1} ,
   \end{align*}   
   where $\pair{u-U, \Xi}=\pairLt{u-U, \Xi}{0,1}$.
   Combining the above two estimates with the triangle inequality and 
   \eqref{eq:inter_est_soln_b}, we end up with
   \begin{align*}
   	\normSZ{\gamma - G}1\infty{0,1}
   		&\lesssim h\normSZ{\gamma}2\infty{0,1} \\
                &+  \normSZ{\gamma - G}1\infty{0,1} \normSZ{v}{1}{p}{\Omega}
			  +  \normSZ{y-Y}1p\Omega  + \normLt{u-U}{0,1}.  
   \end{align*}
   Using that $\normSZ{v}{1}{p}{\Omega}$ is small finally yields the desired result. 
\end{proof}

\begin{lem}[error estimate for $y$]\label{lem:y_apriori}
Given $u\in\Uad$ and $U\in\bbU_{ad}$, 
let $\del{\gamma,y}$ and $\del{G,Y}$ solve \eqref{eq:lin_st_const} and
\eqref{eq:disc_state_var_pde} respectively with 
$\normSZ{v}{1}{p}{\Omega}$ small. Then the following
estimate for $y - Y$ holds
   \begin{align*}
   	\normSZ{y - Y}1p\Omega
  		&\lesssim h\del{\normSZ \gamma 2\infty{0,1} + \normSZ y2p\Omega } 
		          + \normSZ{v}1p\Omega \normLt{u-U}{0,1} .
    \end{align*}
\end{lem}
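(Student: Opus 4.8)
The plan is to bound the discrete error $\calI_h y - Y$ through the discrete inf-sup condition \eqref{eq:disc_b2_infsup} applied to the bilinear form $\BO\sbr{\cdot,\cdot;A\sbr G}$, and then to recover $y-Y$ by the triangle inequality together with the Lagrange interpolation estimate \eqref{eq:inter_est_soln_a}. First I would write
\[
    \normSZ{\calI_h y - Y}1p\Omega \le \beta \sup_{0 \neq Z \in \mathring{\bbV}_h}
    \frac{\BO\sbr{\calI_h y - Y, Z; A\sbr G}}{\normSZ{Z}1q\Omega}
\]
and split the numerator as $\BO\sbr{\calI_h y - y, Z; A\sbr G} + \BO\sbr{y - Y, Z; A\sbr G}$. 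The first (interpolation) term is controlled by $h\,\normSZ y2p\Omega\,\normSZ Z1q\Omega$ using \eqref{eq:inter_est_soln_a} and the uniform $L^\infty$ bound $C_A$ on $A\sbr G$.

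The crux is the Galerkin term $\BO\sbr{y-Y,Z;A\sbr G}$. Since $\mathring{\bbV}_h \subset \sobZ1q\Omega$, every $Z \in \mathring{\bbV}_h$ is an admissible test function in both state equations. Choosing $\Xi = 0$ in the discrete equation \eqref{eq:disc_state_var_pde} gives $\BO\sbr{Y+v,Z;A\sbr G} = 0$, while choosing $\xi = 0$ in the continuous equation \eqref{eq:lin_st_const} gives $\BO\sbr{y+v,Z;A\sbr\gamma} = 0$. Subtracting these (the $v$-contribution cancels by linearity in the first slot) and inserting the common coefficient $A\sbr G$ yields the key identity
\[
    \BO\sbr{y-Y, Z; A\sbr G} = \BO\sbr{y+v, Z; A\sbr G - A\sbr\gamma},
\]
which reduces the error to a coefficient difference. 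I would then estimate the right-hand side by H\"older's inequality as $\norm{A\sbr G - A\sbr\gamma}_{L^\infty}\,\normSZ{y+v}1p\Omega\,\normSZ Z1q\Omega$, use the Lipschitz dependence of $A$ on $\gamma$ (through the explicit form \eqref{eq:op_A}) to obtain $\norm{A\sbr G - A\sbr\gamma}_{L^\infty} \lesssim \normSZ{\gamma - G}1\infty{0,1}$, and use the $\bbB_1$ bound $\normSZ y1p\Omega \lesssim \normSZ v1p\Omega$ so that $\normSZ{y+v}1p\Omega \lesssim \normSZ v1p\Omega$. Collecting terms and invoking \eqref{eq:inter_est_soln_a} once more gives
\[
    \normSZ{y - Y}1p\Omega \lesssim h\,\normSZ y2p\Omega + \normSZ v1p\Omega\,\normSZ{\gamma - G}1\infty{0,1}.
\]

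Finally I would close the estimate by inserting the preliminary bound of \lemref{lem:gamma_apriori_1}, namely $\normSZ{\gamma-G}1\infty{0,1} \lesssim h\,\normSZ\gamma2\infty{0,1} + \normSZ{y-Y}1p\Omega + \normLt{u-U}{0,1}$. Because $\normSZ v1p\Omega$ is small, the resulting term $\normSZ v1p\Omega\,\normSZ{y-Y}1p\Omega$ can be absorbed into the left-hand side, and bounding the remaining prefactor $\normSZ v1p\Omega$ on the $h\,\normSZ\gamma2\infty{0,1}$ contribution by a constant yields the claimed estimate. The main obstacle is the bookkeeping around the key identity: one must verify that discrete test functions are admissible in the continuous equation and must use the smallness of $v$ twice over—both to control $\normSZ y1p\Omega$ and to absorb the $\normSZ{y-Y}1p\Omega$ term on the right—while the nonlinear dependence of $A$ on $\gamma$ enters only through the clean Lipschitz bound.
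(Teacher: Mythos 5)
Your proposal is correct and follows essentially the same route as the paper's proof: the discrete inf-sup \eqref{eq:disc_b2_infsup} applied to $\calI_h y - Y$, the Galerkin-type identity $\BO\sbr{y-Y,Z;A\sbr G}=\BO\sbr{y+v,Z;A\sbr G - A\sbr\gamma}$ obtained by testing both state equations with $(\,0,Z)$, the $\bbB_1$ bound $\normSZ{y}1p\Omega\lesssim\normSZ{v}1p\Omega$, and finally insertion of \lemref{lem:gamma_apriori_1} with absorption of the $\normSZ{v}1p\Omega\normSZ{y-Y}1p\Omega$ term via the smallness of $v$. Nothing is missing; the bookkeeping points you flag (admissibility of discrete test functions in the continuous equation, the double use of smallness of $v$) are exactly the ones the paper relies on.
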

\begin{proof}
We proceed as in Lemma \ref{lem:gamma_apriori_1}. We use the
\emph{discrete} inf-sup followed by the interpolation estimate 
\eqref{eq:inter_est_soln_a},
together with the state constraint $\normSZ{G}{1}{\infty}{{0,1}} \le 1$, to obtain
\begin{align*}
  \normSZ{\calI_h y - Y}1p\Omega &\lesssim 
  \sup_{0\neq Z \in \mathring{\bbV}_h}
  \frac{\BO\sbr{\calI_h y - Y,Z;A\sbr{G}}}{\normSZ{Z}{1}{q}{\Omega}}
  \\
  &\lesssim h\normSZ{y}2p\Omega 
  + \sup_{0\neq Z \in \mathring{\bbV}_h}
  \frac{\BO\sbr{y - Y,Z;A\sbr{G}}}{\normSZ{Z}{1}{q}{\Omega}}.
\end{align*}
	We handle the last term by using that $y$ and $Y$ are solutions to \eqref{eq:lin_st_const} and \eqref{eq:disc_state_var_pde}, i.e.
	\begin{align*}
		\BO\sbr{y -  Y, Z; A\sbr{G}}
			&= 
			    \BO\sbr{y + v, Z; A\sbr{G}} - \BO\sbr{Y + v, Z; A\sbr{G}} \\
			&=
			    \BO\sbr{y+v, Z; A\sbr{G}-A\sbr{\gamma}},
    \end{align*}
followed by the bound $\normSZ{y}1p\Omega \lesssim \normSZ{v}1p\Omega$ in the definition of 
    $\bbB_1$ to yield    
    \begin{align*}    
        \BO\sbr{y -  Y, Z; A\sbr{G}}
            &\lesssim
                \normSZ{\gamma-G}1\infty{0,1}\normSZ{v}1p\Omega \normSZ{Z}1q\Omega.
    \end{align*}                
Combining the above estimates with \lemref{lem:gamma_apriori_1}, 
and using the triangle inequality in conjunction with 
\eqref{eq:inter_est_soln_a}, we obtain 
	\begin{align*}
		\normSZ{y - Y}1p\Omega 
			&\lesssim h \normSZ y2p\Omega 
			    + \normSZ{\gamma - G}1\infty{0,1} \normSZ{v}{1}{p}{\Omega} \\
			&\lesssim 
			    h\del{\normSZ{\gamma}2\infty{0,1}\normSZ{v}1p\Omega + \normSZ y2p\Omega} 
			    +\normLt{u-U}{0,1} \normSZ{v}{1}{p}{\Omega} \\
			 &\quad + \normSZ{y-Y}{1}{p}{\Omega}\normSZ{v}1p\Omega . 
	\end{align*}
The desired estimate is a consequence of the smallness
assumption on $\normSZ{v}{1}{p}{\Omega}$.
\end{proof}

\begin{lem}[error estimate for $\gamma$]\label{cor:gamma_apriori}
Given $u\in\Uad$ and $U\in\bbU_{ad}$, 
let $\del{\gamma,y}$ and $\del{G,Y}$ solve \eqref{eq:lin_st_const} and
\eqref{eq:disc_state_var_pde} respectively with
$\normSZ{v}{1}{p}{\Omega}$ small. Then the following 
error estimate for $\gamma - G$ holds
   \begin{align*}
   	\normSZ{\gamma - G}1\infty{0,1}
  		&\lesssim h\del{\normSZ{\gamma}2\infty{0,1} 
  		                + \normSZ y2p\Omega} 
  		    + \normLt{u-U}{0,1}  .
   \end{align*}
\end{lem}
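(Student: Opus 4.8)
The plan is to obtain this estimate as an immediate consequence of the two preceding lemmas, simply by feeding the bound on $y-Y$ into the preliminary bound on $\gamma-G$. Concretely, Lemma \ref{lem:gamma_apriori_1} already delivers
\[
  \normSZ{\gamma - G}1\infty{0,1}
  \lesssim h\normSZ{\gamma}2\infty{0,1} + \normSZ{y-Y}1p\Omega + \normLt{u-U}{0,1},
\]
so the only task is to remove the dependence on $\normSZ{y-Y}1p\Omega$ on the right-hand side, since the final statement is meant to be expressed purely in terms of the interpolation data $h\bigl(\normSZ{\gamma}2\infty{0,1}+\normSZ{y}2p\Omega\bigr)$ and the control mismatch $\normLt{u-U}{0,1}$.

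First I would invoke Lemma \ref{lem:y_apriori}, which bounds $\normSZ{y-Y}1p\Omega$ by $h\bigl(\normSZ{\gamma}2\infty{0,1}+\normSZ{y}2p\Omega\bigr) + \normSZ{v}1p\Omega\,\normLt{u-U}{0,1}$. Substituting this directly into the preliminary estimate produces a first-order term $h\bigl(\normSZ{\gamma}2\infty{0,1}+\normSZ{y}2p\Omega\bigr)$, a residual control term $\normSZ{v}1p\Omega\,\normLt{u-U}{0,1}$, and the original $\normLt{u-U}{0,1}$ from Lemma \ref{lem:gamma_apriori_1}. Because $\normSZ{v}1p\Omega$ is assumed small (in particular bounded), the factor $\normSZ{v}1p\Omega$ in front of $\normLt{u-U}{0,1}$ can be absorbed into the constant hidden in $\lesssim$, so the two control terms collapse into a single $\normLt{u-U}{0,1}$, which is exactly the asserted estimate.

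There is essentially no obstacle here: the work is concentrated entirely in Lemmas \ref{lem:gamma_apriori_1} and \ref{lem:y_apriori}, whose proofs relied on the discrete inf-sup condition and the smallness of $\normSZ{v}1p\Omega$. The present lemma is a bookkeeping step that packages their conclusions into the clean form needed downstream for the $L^2$ control estimate. The only point worth being careful about is that the smallness of $\normSZ{v}1p\Omega$ (the same hypothesis already in force in both source lemmas) is what licenses absorbing $\normSZ{v}1p\Omega\,\normLt{u-U}{0,1}$ into a pure $\normLt{u-U}{0,1}$ term; no new smallness requirement is introduced.
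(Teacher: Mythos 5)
Your proposal is correct and matches the paper's own proof, which simply combines Lemma \ref{lem:gamma_apriori_1} with Lemma \ref{lem:y_apriori}; you have merely spelled out the substitution and the absorption of the $\normSZ{v}1p\Omega\,\normLt{u-U}{0,1}$ term into the implicit constant, which is exactly what the one-line proof in the paper leaves implicit.
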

\begin{proof} The assertion follows by combining \lemref{lem:y_apriori} with \lemref{lem:gamma_apriori_1}.
\end{proof}

\begin{lem}[preliminary error estimate for $s$]\label{lem:s_apriori_1}
Given $u\in\Uad$ and $U\in\bbU_{ad}$, 
let $\del{s,r - Es} \in \sobZ 1 1 {0,1} \times \sobZ 1 q \Omega$ 
satisfy the continuous adjoint system 
\eqref{eq:lin_rop_sop}, 
and $\del{S, R-E_hS}\in\mathring{\bbS}_h\times\mathring{\bbV}_h$
satisfy the discrete counterpart
\eqref{eq:disc_adj_var_pde}. Then the following error estimate for $s-S$ is valid
	\begin{align*}
		&\normSZ{s-S}11{0,1} \lesssim 
			    h \normSZ{s}21{0,1}
			    +  \del{1 + \normSZ{r}1q\Omega  \normSZ{v}{1}{p}{\Omega} }  
			             \normSZ{\gamma-G}1\infty{0,1}  \\
			&\qquad
			 + \del{\normLt{\delta y}\Omega + \normSZ{y-Y}1p\Omega
    			    + \normSZ{r}1q\Omega} \normSZ{y-Y}1p\Omega  
                            +  \normSZ{r-R}1q\Omega \normSZ{v}{1}{p}{\Omega}.
	\end{align*} 
\end{lem}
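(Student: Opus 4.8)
The plan is to adapt the argument of \lemref{lem:gamma_apriori_1}, replacing the inf-sup \eqref{eq:disc_b1_infsup_a} by \eqref{eq:disc_b1_infsup_b}, which controls the $\sobZ11{0,1}$-seminorm through test functions measured in $\sobZ1\infty{0,1}$. First I would apply \eqref{eq:disc_b1_infsup_b} to $\calI_h s - S\in\mathring{\bbS}_h$,
\[ \normSZ{\calI_h s - S}11{0,1} \lesssim \sup_{0\neq\Xi\in\mathring{\bbS}_h}\frac{\BG\sbr{\Xi,\calI_h s - S}}{\normSZ{\Xi}1\infty{0,1}}, \]
and split $\BG\sbr{\Xi,\calI_h s - S} = \BG\sbr{\Xi,\calI_h s - s} + \BG\sbr{\Xi, s - S}$. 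The first summand is handled by the $L^\infty$--$L^1$ H\"older inequality (matching exactly the norms of the inf-sup) together with the interpolation estimate \eqref{eq:inter_est_soln_b} with $p=1$, producing $h\,\normSZ{s}21{0,1}\normSZ{\Xi}1\infty{0,1}$; the triangle inequality and \eqref{eq:inter_est_soln_b} once more recover $\normSZ{s-S}11{0,1}$.

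The crux is the consistency term $\BG\sbr{\Xi, s-S}$. I would test the continuous adjoint equation \eqref{eq:lin_rop_sop} and its discrete counterpart \eqref{eq:disc_adj_var_pde} with the \emph{same} pair $(\Xi,0)$, $\Xi\in\mathring{\bbS}_h$, and subtract. Choosing a vanishing bulk test component annihilates the $\BO\sbr{Z,\cdot;\cdot}$ terms and the $\pair{Z,\cdot}$ data terms, leaving the identity
\[ \BG\sbr{\Xi, s-S} = \BO\sbr{Y+v, R; \Dif A\sbr{G}\difdir\Xi} - \BO\sbr{y+v, r; \Dif A\sbr{\gamma}\difdir\Xi} + \pair{\Xi, \gamma - G} + \frac{\mu}{2}\Big\langle\Xi,\int_0^1\del{\abs{\delta y}^2-\abs{\delta Y}^2}\dif x_2\Big\rangle. \]

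I would then telescope the difference of $\BO$-forms as
\[ \BO\sbr{Y-y, R; \Dif A\sbr{G}\difdir\Xi} + \BO\sbr{y+v, R-r; \Dif A\sbr{G}\difdir\Xi} + \BO\sbr{y+v, r; \del{\Dif A\sbr{G}-\Dif A\sbr{\gamma}}\difdir\Xi}, \]
so that each error factor meets a controlled partner. Using the state constraint $\abs{G'}\le 1$ to bound $\Dif A\sbr{G}\difdir\Xi$ in $L^\infty$ by $\normSZ{\Xi}1\infty{0,1}$ (with Poincar\'e for $\norm{\Xi}_{L^\infty}$), the Lipschitz dependence of $A$ on $\gamma$, and the a priori bounds $\normSZ y1p\Omega,\normSZ Y1p\Omega\lesssim\normSZ v1p\Omega$ (whence $\normSZ{y-Y}1p\Omega\lesssim\normSZ v1p\Omega$), the three pieces give: from the first, $\normSZ r1q\Omega\normSZ{y-Y}1p\Omega + \normSZ v1p\Omega\normSZ{r-R}1q\Omega$ after writing $\normSZ R1q\Omega\le\normSZ r1q\Omega+\normSZ{r-R}1q\Omega$ and absorbing $\normSZ{y-Y}1p\Omega\normSZ{r-R}1q\Omega\lesssim\normSZ v1p\Omega\normSZ{r-R}1q\Omega$; from the second, $\normSZ v1p\Omega\normSZ{r-R}1q\Omega$; from the third, $\normSZ r1q\Omega\normSZ v1p\Omega\normSZ{\gamma-G}1\infty{0,1}$. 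The linear data term $\pair{\Xi,\gamma-G}$ supplies the remaining $\normSZ{\gamma-G}1\infty{0,1}$, completing the coefficient $(1+\normSZ r1q\Omega\normSZ v1p\Omega)$.

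The quadratic data term requires a small trick: I would factor $\abs{\delta y}^2-\abs{\delta Y}^2=(\delta y-\delta Y)(\delta y+\delta Y)=(y-Y)(\delta y+\delta Y)$ and use $\delta Y=\delta y+(Y-y)$, so that after Cauchy--Schwarz over $\Omega$ it is bounded by $\del{\normLt{\delta y}\Omega+\normSZ{y-Y}1p\Omega}\normSZ{y-Y}1p\Omega\,\normSZ{\Xi}1\infty{0,1}$. Dividing every contribution by $\normSZ{\Xi}1\infty{0,1}$ in the supremum and combining with the interpolation term yields the assertion. The main obstacle is precisely the bookkeeping of the telescoped difference: selecting the reference configurations so that $\gamma-G$, $y-Y$, and $r-R$ each multiply the correct bounded factor ($\normSZ v1p\Omega$, $\normSZ r1q\Omega$, or a constant) while keeping \emph{every} resulting term controlled by $\normSZ{\Xi}1\infty{0,1}$, which is the denominator dictated by \eqref{eq:disc_b1_infsup_b}.
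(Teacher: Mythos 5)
Your proposal is correct and follows essentially the same route as the paper's proof: the discrete inf-sup \eqref{eq:disc_b1_infsup_b} plus the interpolation estimate for $s-\calI_h s$, then subtraction of the continuous and discrete adjoint equations tested with $(\Xi,0)$, telescoping of the $\BO$-difference term by term, and the factorization $\abs{\delta y}^2-\abs{\delta Y}^2=(y-Y)(\delta y+\delta Y)$ for the quadratic data term. The only deviation is your choice of intermediate references in the telescoping --- the paper pairs $y-Y$ with the continuous $r$ and $r-R$ with $Y+v$, which avoids the extra absorption step $\normSZ{y-Y}1p\Omega\,\normSZ{r-R}1q\Omega\lesssim\normSZ{v}1p\Omega\,\normSZ{r-R}1q\Omega$ that your ordering requires --- but this is harmless bookkeeping, since the discrete state bound $\normSZ{Y}1p\Omega\lesssim\normSZ{v}1p\Omega$ holds by the discrete well-posedness argument.
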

\begin{proof}
We again employ the \emph{discrete} inf-sup \eqref{eq:disc_b1_infsup_b}, 
now taking the form
	\begin{align*}
		\normSZ{\calI_h s - S}11{0,1} 
   			&\lesssim \sup_{0\neq \Xi\in \mathring{\bbS}_h} 
			      \frac{\BG\sbr{\Xi,\calI_h s-S}}{\normSZ{\Xi}1\infty{0,1}} \\
			&\lesssim 
			    h\normSZ{s}21{0,1} 
			  + \sup_{0\neq \Xi\in \mathring{\bbS}_h} 
			      \frac{\BG\sbr{\Xi,s-S}}{\normSZ{\Xi}1\infty{0,1}},
	\end{align*}
where the last inequality follows by adding and subtracting $s$,
the continuity of $\BG$, and the interpolation estimate 
\eqref{eq:inter_est_soln_b}
for $s - \calI_h s$. It remains to control the last term. We use that $s$ and $S$ satisfy 
equations \eqref{eq:lin_rop_sop} and \eqref{eq:disc_adj_var_pde} 
to obtain
	\begin{align*}
		\BG\sbr{\Xi, s-S} 
			&= \pair{\Xi, \gamma-\gamma_d} - \pair{\Xi, G - \gamma_d} \\
			&\quad+ \HA{\mu}\Big\langle\Xi, \frac{1}{2}\int_0^1\abs{y+v-y_d}^2 \dif x_2\Big\rangle - \HA{\mu}\Big\langle\Xi, \frac{1}{2}\int_0^1\abs{Y+v-y_d}^2 \dif x_2\Big\rangle \\
			&\quad
			- \BO\sbr{y+v,r;DA\sbr\gamma\difdir{\Xi}} 
      			              + \BO\sbr{Y + v, R; DA\sbr{G}\difdir{\Xi}}
      	      \\
            &= \pair{\Xi, \gamma - G} 
                + \HA{\mu}\Big\langle \Xi, \frac{1}{2}\int_0^1 \del{y-Y}
                   \del{y+Y+2v-2y_d} \dif x_2
                  \Big\rangle \\
            &\quad       
                - \BO\sbr{y-Y, r; DA\sbr\gamma\difdir{\Xi}}                              
                -\BO\sbr{Y+v, r; \del{DA\sbr\gamma - DA\sbr G}\difdir{\Xi}} 
                \\
            &\quad
                -\BO\sbr{Y+v, r-R; DA\sbr G\difdir{\Xi}}.
\end{align*}               
Consequently, after normalization $\normSZ{\Xi}1\infty{0,1} = 1$, we infer that
\begin{align*}              
      \abs{\BG\sbr{\Xi, s-S}}
              \lesssim
                 \normLo{\gamma - G}{0,1}   
                 + \HA{\mu}\normLt{y-Y}\Omega 
                     \del{2\norm{y+v-y_d}+\normLt{Y-y}\Omega} \\
                 + \normSZ{y-Y}1p\Omega\normSZ{r}1q\Omega 
                 + \normSZ{v}1p\Omega\del{
                     \normSZ{r}1q\Omega\normSZ{\gamma - G}1\infty{0,1} + 
                     \normSZ{r-R}1q\Omega 
                     } . 
    \end{align*}
The desired result for $\normSZ{\calI_h s - S}11{0,1}$ follows
after combining the above estimate with $\normLo{\gamma-G}{0,1} \lesssim \normSZ{\gamma - G}1\infty{0,1}$ and $\normLt{y-Y}\Omega \lesssim
    \normSZ{y-Y}1p\Omega$. 
Finally, applying the triangle inequality in conjunction with
  \eqref{eq:inter_est_soln_b} we deduce the asserted estimate for
$\normSZ{s - S}11{0,1}$.
\end{proof}

\begin{lem}[error estimate for $r$]\label{lem:r_apriori_est}
Given $u\in\Uad$ and $U\in\bbU_{ad}$, 
let $\del{s,r - Es} \in \sobZ 1 1 {0,1} \times \sobZ 1 q \Omega$ 
satisfy the continuous adjoint system 
\eqref{eq:lin_rop_sop}, 
and $\del{S, R-E_hS}\in\mathring{\bbS}_h\times\mathring{\bbV}_h$
satisfy the discrete counterpart \eqref{eq:disc_adj_var_pde}.
Then the following a-priori error estimate for $r-R$ holds
	\begin{align*}
		\normSZ{r-R}1q\Omega 
		    & \lesssim 
		        h\del{\normSZ{s}21{0,1} + \normSZ{r}2q\Omega} \\
		    &\quad
		        + \del{1+ \del{1+\normSZ{v}1p\Omega}\normSZ{r}1q\Omega
		               + \normL{\delta y}q\Omega  }
		               \normSZ{\gamma - G}1\infty{0,1} \\
		    &\quad
		        + \del{1+ \normLt{\delta y}\Omega + 
		               \normSZ{y-Y}1p\Omega +  \normSZ{r}1q\Omega }
		               \normSZ{y - Y}1p\Omega.
	\end{align*}
\end{lem}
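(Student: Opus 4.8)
The plan is to reproduce the scheme of Lemma~\ref{lem:s_apriori_1}, but driven by the inf-sup stability of $\BO$ rather than $\BG$, and to reduce the nonzero boundary values of $r$ and $R$ to the already estimated quantity $s-S$. First I would split $r=r_0+Es$ and $R=R_0+E_hS$ with $r_0:=r-Es\in\sobZ1q\Omega$ and $R_0:=R-E_hS\in\mathring{\bbV}_h$, so that $\normSZ{r-R}1q\Omega\le\normSZ{r_0-R_0}1q\Omega+\normSZ{Es-E_hS}1q\Omega$. For the extension term, using $E_h={\cal S}_h\circ E$ I would write $Es-E_hS=(Es-{\cal S}_hEs)+{\cal S}_hE(s-S)$; the interpolation estimate \eqref{eq:inter_est_ext} and boundedness of $E:\sob21{0,1}\to\sob2q\Omega$ control the first summand by $h\normSZ{s}21{0,1}$, while stability of ${\cal S}_h$ and of $E$ in \eqref{eq:w1_wq_ext} bound the second by $\normSZ{s-S}11{0,1}$. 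Inserting Lemma~\ref{lem:s_apriori_1} then accounts for the terms $h\normSZ{s}21{0,1}$, the coefficient-$1$ contribution of $\normSZ{\gamma-G}1\infty{0,1}$, part of the factor multiplying $\normSZ{r}1q\Omega\normSZ{\gamma-G}1\infty{0,1}$, the whole group multiplying $\normSZ{y-Y}1p\Omega$, and --- crucially --- a term $\normSZ{r-R}1q\Omega\normSZ{v}1p\Omega$ that must later be absorbed.

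For the interior part $\normSZ{r_0-R_0}1q\Omega$ I would invoke the transposed companion of the discrete inf-sup \eqref{eq:disc_b2_infsup}, namely $\normSZ{W}1q\Omega\lesssim\sup_{0\neq Z\in\mathring{\bbV}_h}\BO\sbr{Z,W;A\sbr{G}}/\normSZ{Z}1p\Omega$ for $W\in\mathring{\bbV}_h$, which holds with the same constant by the finite-dimensional duality of inf-sup conditions (consistent with both directions being available in the continuous setting). Applying it to ${\cal S}_hr_0-R_0\in\mathring{\bbV}_h$ and peeling off the interpolation error ${\cal S}_hr_0-r_0$ via \eqref{eq:inter_est_ext} produces the term $h\normSZ{r_0}2q\Omega\lesssim h(\normSZ{r}2q\Omega+\normSZ{s}21{0,1})$. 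It then remains to bound $\BO\sbr{Z,r_0-R_0;A\sbr{G}}=\BO\sbr{Z,r-R;A\sbr{G}}-\BO\sbr{Z,Es-E_hS;A\sbr{G}}$, whose last summand is again controlled by $\normSZ{Es-E_hS}1q\Omega$ and hence by the previous paragraph.

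The core is $\BO\sbr{Z,r-R;A\sbr{G}}$. Testing the continuous adjoint equation \eqref{eq:lin_rop_sop} with $(\xi,z)=(0,Z)$ annihilates $\BG$ and the $\xi$-linear coupling term, and leaves $\BO\sbr{Z,r;A\sbr{\gamma}}=\pair{Z,\HA{\mu}\delta y(1+\gamma)}$; testing the discrete equation \eqref{eq:disc_adj_var_pde} with $(0,Z)$ gives $\BO\sbr{Z,R;A\sbr{G}}=\pair{Z,\HA{\mu}\delta Y(1+G)}$. Introducing the common coefficient $A\sbr{G}$ and subtracting, I obtain
\[
\BO\sbr{Z,r-R;A\sbr{G}}=\BO\sbr{Z,r;A\sbr{G}-A\sbr{\gamma}}+\HA{\mu}\pair{Z,\delta y(1+\gamma)-\delta Y(1+G)} .
\]
The first term is bounded by $\normSZ{\gamma-G}1\infty{0,1}\normSZ{r}1q\Omega$ via the Lipschitz dependence of $A$ on $\gamma$ and H\"older's inequality with conjugate exponents $p,q$. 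For the second I would write $\delta y(1+\gamma)-\delta Y(1+G)=(y-Y)(1+\gamma)+\delta Y(\gamma-G)$ and $\delta Y=\delta y-(y-Y)$; H\"older's inequality, with $\normL{Z}p\Omega$ controlled by $\normSZ{Z}1p\Omega$ through Poincar\'e, then yields the contributions $\normSZ{y-Y}1p\Omega$, $\normL{\delta y}q\Omega\,\normSZ{\gamma-G}1\infty{0,1}$, and a cross term $\normSZ{y-Y}1p\Omega\normSZ{\gamma-G}1\infty{0,1}$ that is dominated by $\normSZ{\gamma-G}1\infty{0,1}$ because the state error $\normSZ{y-Y}1p\Omega$ is bounded by the data.

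Finally I would collect all contributions, use $\normLo{\gamma-G}{0,1}\lesssim\normSZ{\gamma-G}1\infty{0,1}$ and $\normLt{y-Y}\Omega\lesssim\normSZ{y-Y}1p\Omega$, and combine with the bounds of the first two paragraphs. The main obstacle is the term $\normSZ{r-R}1q\Omega\normSZ{v}1p\Omega$ inherited from Lemma~\ref{lem:s_apriori_1}: since $\normSZ{v}1p\Omega$ is small, it is absorbed into the left-hand side by a kick-back argument, which simultaneously absorbs the remaining products of two error quantities that carry a factor $\normSZ{v}1p\Omega$. Securing the transposed inf-sup of the second paragraph is the other point requiring care. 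This yields the asserted estimate.
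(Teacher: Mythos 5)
Correct, and essentially the paper's own proof: you use the same splitting $r=r_0+Es$, $R=R_0+E_hS$, the same discrete inf-sup in the $\sob1q\Omega$-norm (which the paper invokes implicitly through Proposition~\ref{prop:disc_b_infsup} and the symmetry of $A\sbr{G}$), the same device of testing both adjoint equations with $(0,Z)$ to switch the coefficient to $A\sbr{G}$, and the same final kick-back that absorbs the term $\normSZ{r-R}1q\Omega\,\normSZ{v}1p\Omega$ inherited from Lemma~\ref{lem:s_apriori_1} using the smallness of $\normSZ{v}1p\Omega$. The only (immaterial) deviation is your algebraic split of $\delta y(1+\gamma)-\delta Y(1+G)$, which creates a harmless cross term $\normSZ{y-Y}1p\Omega\normSZ{\gamma-G}1\infty{0,1}$ that the paper avoids by writing this difference as $(y-Y)(1+G)+\delta y\,(\gamma-G)$ and using the discrete state constraint $\abs{G'}\le 1$.
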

\begin{proof}
Since the discrete inf-sup \eqref{eq:disc_b2_infsup} is for functions in $\mathring\bbV_h$, we write $r = r_0 + E s$, and $ R = R_0 + E_h S $, with $r_0 \in \sobZ1q\Omega$ and $R_0 \in \mathring\bbV_h$, to obtain 

    \begin{align*}
        \normSZ{r - R}1q\Omega 
            &\leq
                \normSZ{r_0 - \mathcal{S}_h r_0}1q\Omega 
                + \normSZ{\mathcal{S}_h r_0 - R_0}1q\Omega 
                + \normSZ{Es - E_h S}1q\Omega .
    \end{align*}
Consequently, applying \eqref{eq:disc_b2_infsup}   
    \begin{align*}                
        \normSZ{\mathcal{S}_h r_0 - R_0}1q\Omega 
           &\lesssim
                h\normSZ{r_0}2q\Omega 
                + \sup_{0\neq Z \in \mathring{\bbV}_h}
		           \frac{\BO\sbr{Z, r_0 - R_0;A\sbr{G}}}{\normSZ{Z}{1}{p}{\Omega}},
    \end{align*}
where we have added and subtracted $r_0$.
Moreover, we handle the last term as before, i.e.  
	\begin{align*}
		\BO\sbr{Z, r_0-R_0; A\sbr{G}} 
			&= 
			  \BO\sbr{Z, r_0 + Es; A\sbr\gamma} 			  - \BO\sbr{Z, R_0 + E_hS; A\sbr{G}} \\
			  &\quad
			  + \BO\sbr{Z, r_0+Es; A\sbr{G}-A\sbr\gamma} 
			  + \BO\sbr{Z, E_hS-Es; A\sbr{G}} .
    \end{align*}
Invoking the \emph{adjoint equations} \eqref{eq:lin_rop_sop} and 
\eqref{eq:disc_adj_var_pde}, we see that
    \begin{align*}
        \BO\sbr{Z, r_0 + Es; A\sbr\gamma} &- \BO\sbr{Z, R_0 + E_hS; A\sbr{G}}\\ 
         &= \HA{\mu}\pair{\del{y + v - y_d}\del{1+\gamma}, Z} 
           - \HA{\mu}\pair{\del{Y + v - y_d}\del{1+G}, Z} \\
         &= \HA{\mu}\pair{y-Y, Z} + \HA{\mu}\pair{y\gamma - YG, Z}
            + \HA{\mu}\pair{\del{v-y_d}\del{\gamma-G},Z}  .
    \end{align*}     
    Since $y\gamma - YG = y\del{\gamma-G}-\del{Y-y}G$, 
    after normalization $\normSZ{Z}1p\Omega = 1$ and using 
    \eqref{eq:disc_state_const}, we obtain 
    \begin{align*}
        \abs{\BO\sbr{Z, r_0-R_0; A\sbr{G}}}
			&\lesssim 
			  \normL{y-Y}q\Omega  
			  +\normSZ{\gamma-G}1\infty{0,1} 
			      \del{ \normSZ{r}1q\Omega + \normL{\delta y}q\Omega } \\
			&\quad  + \normSZ{Es - E_hS}1q\Omega.
	\end{align*}
Combining this together with $\normSZ{Es - E_hs}1q\Omega \lesssim h\normSZ{s}21{0,1}$ and $\normSZ{E_hs - E_hS}1q\Omega \lesssim \normSZ{s-S}11{0,1}$, we end up with
	\begin{align*}
		\normSZ{r-R}1q\Omega 
			&\lesssim 
			    h \del{\normSZ{s}21{0,1}  + \normSZ{r}2q\Omega }  \\
		    &\quad + \normL{y - Y}q\Omega 
			    + \normSZ{\gamma-G}1\infty{0,1}  
			       \del{ \normSZ{r}1q\Omega + \normL{\delta y}q\Omega } \\
			&\quad + \normSZ{s-S}11{0,1}.	
	\end{align*} 
Finally, under the smallness assumption on $\normSZ{v}{1}{p}{\Omega}$ and
$\normL{y-Y}q\Omega \lesssim \normSZ{y-Y}1p\Omega$, \lemref{lem:s_apriori_1} yields the desired result.
\end{proof}

\begin{lem}[error estimate for $s$] \label{cor:s_apriori} The following a-priori estimate for $s-S$ holds
         \begin{align*}
		\normSZ{s-S}11{0,1} 
		&\lesssim
                h \del{\normSZ{\gamma}2\infty{0,1} + \normSZ{y}2p\Omega 
                 + \normSZ{s}21{0,1} + \normSZ{r}2q\Omega } +  \normLt{u - U}{0,1}.
	\end{align*} 
\end{lem}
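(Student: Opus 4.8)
The plan is to treat this statement as a corollary that assembles the four preceding error estimates into a clean bound. The starting point is the preliminary estimate for $s-S$ in \lemref{lem:s_apriori_1}; the goal is to eliminate from its right-hand side the three error quantities $\normSZ{\gamma-G}1\infty{0,1}$, $\normSZ{y-Y}1p\Omega$, and $\normSZ{r-R}1q\Omega$ by substituting \lemref{cor:gamma_apriori}, \lemref{lem:y_apriori}, and \lemref{lem:r_apriori_est} respectively, retaining only the $h$-weighted regularity norms and the control error $\normLt{u-U}{0,1}$.

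First I would observe that every coefficient appearing in \lemref{lem:s_apriori_1} is uniformly bounded in $h$: the continuous adjoint norm $\normSZ{r}1q\Omega$ is controlled by its second-order counterpart, $\normLt{\delta y}\Omega$ is a data-dependent constant, and the discrete and continuous states lie in the bounded set $\bbB_2$, so $\normSZ{y-Y}1p\Omega$ may be treated as a bounded factor wherever it multiplies another error quantity. Consequently the prefactors $\del{1 + \normSZ{r}1q\Omega \normSZ{v}1p\Omega}$ and $\del{\normLt{\delta y}\Omega + \normSZ{y-Y}1p\Omega + \normSZ{r}1q\Omega}$ collapse into the implicit constant of $\lesssim$, reducing that estimate to
\[
\normSZ{s-S}11{0,1} \lesssim h\normSZ{s}21{0,1} + \normSZ{\gamma-G}1\infty{0,1} + \normSZ{y-Y}1p\Omega + \normSZ{v}1p\Omega\normSZ{r-R}1q\Omega.
\]

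Next I would insert \lemref{lem:r_apriori_est} into the last term; because it carries the small factor $\normSZ{v}1p\Omega$, its own bounded coefficients again fold into $\lesssim$, its $h$-contribution $h\del{\normSZ{s}21{0,1}+\normSZ{r}2q\Omega}$ survives at the correct order, and its $\gamma-G$ and $y-Y$ pieces merge with those already present. Finally I would substitute \lemref{cor:gamma_apriori} and \lemref{lem:y_apriori}; every resulting product of $\normSZ{v}1p\Omega$ with an $h$-term is absorbed into the $h$-term, and every product of $\normSZ{v}1p\Omega$ with $\normLt{u-U}{0,1}$ is absorbed into $\normLt{u-U}{0,1}$ using the smallness of $v$. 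Collecting the surviving $h$-weighted regularity norms $\normSZ{\gamma}2\infty{0,1}$, $\normSZ{y}2p\Omega$, $\normSZ{s}21{0,1}$, $\normSZ{r}2q\Omega$ yields exactly the claimed bound.

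The main obstacle is organizational rather than analytical: maintaining a disciplined ledger of which quantities are fixed data-dependent constants and which genuinely scale with $h$ or with the control error, so that no spurious $O(1)$ term such as $\normSZ{v}1p\Omega\normSZ{r-R}1q\Omega$ (prior to substitution) is left behind. I would also verify at the outset that there is no circular dependence: as stated, \lemref{lem:r_apriori_est} has already had $\normSZ{s-S}11{0,1}$ eliminated from its right-hand side, so all substitutions are one-directional and no absorption of $\normSZ{s-S}11{0,1}$ into the left side, nor any Gr\"onwall-type argument, is required.
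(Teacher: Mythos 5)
Your proposal is correct and follows essentially the same route as the paper: start from \lemref{lem:s_apriori_1}, substitute \lemref{lem:r_apriori_est} for the $\normSZ{r-R}1q\Omega$ term, then insert \lemref{cor:gamma_apriori} and \lemref{lem:y_apriori}, absorbing the bounded coefficients and the small factor $\normSZ{v}1p\Omega$ into the implicit constant. The only cosmetic difference is that the paper keeps the intermediate coefficients explicit (its $c_1$, $c_2$, $c_3$) before collapsing them, whereas you collapse them at the outset; the substance is identical.
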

\begin{proof} We use Lemmas \ref{lem:s_apriori_1} and \ref{lem:r_apriori_est}, to obtain
  \begin{align*}
        \normSZ{s - S}11{0,1} 
            &\lesssim 
                h \del{\normSZ{s}21{0,1} + \normSZ{v}1p\Omega\del{\normSZ{s}21{0,1} + \normSZ{r}2q\Omega}} \\
            & +
               \del{c_1 + \normSZ{v}1p\Omega\del{c_1+c_3}}\normSZ{\gamma - G}1\infty{0,1} \\
            & +
                \del{c_2 + \normSZ{y-Y}1p\Omega +  \normSZ{v}1p\Omega \del{1+c_2 + \normSZ{y-Y}1p\Omega}}\normSZ{y - Y}1p\Omega 
    \end{align*}
where 
    \begin{align*}
        c_1 &= 1 + \normSZ{r}1q\Omega \normSZ{v}1p\Omega,  \\
        c_2 &=  \normSZ{r}1q\Omega + \normLt{\delta y}\Omega, \\
        c_3 &= \normSZ{r}1q\Omega + \normL{\delta y}q\Omega .
    \end{align*}    
The assertion follows by applying Lemmas \ref{lem:y_apriori} and \ref{cor:gamma_apriori}, together with $\normSZ{v}1q\Omega \le 1$.
\end{proof}

\section{A-priori Error Estimates: Optimal Control}\label{s:control_numerics}
Next we derive the a-priori error estimate between $\uop$ and $\Uop$. 
\begin{thm}[error estimate for $u$]\label{thm:control_apriori_estimate}
Let both $h_0$ and $\normSZ{v}1p\Omega$ be sufficiently small. If $h \le h_0$, then
\begin{align}  \label{eq:error_est_control}
   		 \normLt{\uop - \Uop}{0,1}
		 &\leq \frac{4}{\lambda} \normLt{s(\Uop) - S(\Uop)}{0,1} ,
	\end{align}  
where $s(\Uop)$ is the solution of the continuous adjoint equation 
\eqref{eq:lin_rop_sop} with $\del{\gamma(\Uop), y(\Uop)}$ solutions of the 
state equation \eqref{eq:lin_st_const} with control $\Uop$, and $S(\Uop)$ is the 
solution of the discrete adjoint equation \eqref{eq:disc_adj_var_pde}.
\end{thm}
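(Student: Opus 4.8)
The plan is to reduce the control error to the adjoint consistency error $\normLt{s(\Uop)-S(\Uop)}{0,1}$ by playing the second-order sufficient condition \eqref{eq:gradJ_quad_growth} against the two first-order variational inequalities \eqref{eq:lin_uop_var_ineq} and \eqref{eq:disc_lin_uop}. First I would observe that $\Uop\in\uop+\cone{\uop}$: since $\Uad$ is convex and $\uop,\Uop\in\Uad$, the segment joining them stays in $\Uad$. Hence \eqref{eq:gradJ_quad_growth} applies with $u=\Uop$,
\[
\frac{\lambda}{4}\normLt{\uop-\Uop}{0,1}^2 \le \pairLt{\calJ'(\Uop)-\calJ'(\uop),\,\Uop-\uop}{0,1}.
\]
Using $\calJ'(\uop)=\lambda\uop+\sop$ together with \eqref{eq:lin_uop_var_ineq} tested at $u=\Uop\in\Uad$ gives $\pairLt{\calJ'(\uop),\Uop-\uop}{0,1}\ge 0$, which I subtract off to obtain
\[
\frac{\lambda}{4}\normLt{\uop-\Uop}{0,1}^2 \le \pairLt{\calJ'(\Uop),\,\Uop-\uop}{0,1} = \pairLt{\lambda\Uop+s(\Uop),\,\Uop-\uop}{0,1}.
\]

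The decisive step is the discrete side. I would add and subtract the discrete adjoint to split the right-hand side as $\pairLt{\lambda\Uop+S(\Uop),\Uop-\uop}{0,1}+\pairLt{s(\Uop)-S(\Uop),\Uop-\uop}{0,1}$, and show the first term is nonpositive. Because $\uop$ is not a finite element function, I cannot insert it into \eqref{eq:disc_lin_uop}; instead I take $\Pi_h\uop$, the $L^2(0,1)$-projection of $\uop$ onto $\bbS_h$. It is admissible since the projection is nonexpansive, so $\normLt{\Pi_h\uop}{0,1}\le\normLt{\uop}{0,1}\le\theta_1/2\alpha$ and thus $\Pi_h\uop\in\bbU_{ad}$. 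Testing \eqref{eq:disc_lin_uop} with $U=\Pi_h\uop$ gives $\pairLt{\lambda\Uop+S(\Uop),\Pi_h\uop-\Uop}{0,1}\ge 0$; and since the discrete reduced gradient $\calJ_h'(\Uop)=\lambda\Uop+S(\Uop)$ lies in $\bbS_h$ while $\uop-\Pi_h\uop$ is $L^2$-orthogonal to $\bbS_h$, the projection error contributes nothing, $\pairLt{\lambda\Uop+S(\Uop),\uop-\Pi_h\uop}{0,1}=0$. Adding these yields $\pairLt{\lambda\Uop+S(\Uop),\uop-\Uop}{0,1}\ge 0$, i.e. the first term is $\le 0$.

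Dropping that term leaves $\tfrac{\lambda}{4}\normLt{\uop-\Uop}{0,1}^2\le \pairLt{s(\Uop)-S(\Uop),\Uop-\uop}{0,1}$, and a single Cauchy--Schwarz inequality produces exactly the constant $4/\lambda$. The smallness of $\normSZ{v}1p\Omega$ and the restriction $h\le h_0$ enter only to keep Theorem~\ref{thm:second_order_suff} in force and to guarantee that both $s(\Uop)$ and $S(\Uop)$ are well defined, the latter via the discrete inf-sup of Proposition~\ref{prop:disc_b_infsup}.

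I expect the main obstacle to be precisely the nonconformity $\uop\notin\bbU_{ad}$. The point that makes the estimate \emph{clean}, rather than polluted by a spurious term $\normLt{\uop-\Pi_h\uop}{0,1}$, is the observation that the discrete reduced gradient $\calJ_h'(\Uop)$ is itself a finite element function, so the $L^2$-projection error is annihilated in the pairing. Getting this orthogonality bookkeeping right, and confirming that $\Pi_h\uop$ stays inside the $L^2$-ball, is where I would be most careful.
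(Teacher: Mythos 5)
Your proof is correct and follows essentially the same route as the paper: local convexity \eqref{eq:gradJ_quad_growth} with $u=\Uop$, the continuous variational inequality \eqref{eq:lin_uop_var_ineq} to drop the $\calJ'(\uop)$ term, insertion of an $L^2$ projection of $\uop$ into the discrete admissible set together with the discrete inequality \eqref{eq:disc_lin_uop} and the orthogonality against $\calJ_h'(\Uop)\in\bbS_h$, and finally Cauchy--Schwarz. The only difference is bookkeeping: the paper projects onto $\bbU_{ad}$ and asserts that the middle term vanishes, whereas you project onto the subspace $\bbS_h$ and verify admissibility via nonexpansiveness of the projection --- which is precisely the justification that makes that step of the paper's argument rigorous.
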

\begin{proof} The proof relies primarily on the continuous quadratic growth condition \eqref{eq:gradJ_quad_growth} and on the continuous and discrete first-order optimality conditions \eqref{eq:lin_uop_var_ineq} and \eqref{eq:disc_var_ineq}. Since $\Uop \in \bbU_{ad}$ is admissible, according to (\ref{eq:disc_spaces}e), replacing $u$ by 
$\Uop$ in $\eqref{eq:gradJ_quad_growth}$ (\cite{ECasas_FTroeltzsch_2002a}) we get
	\begin{align*}
		\frac{\lambda}{4} \normLt{\Uop-\uop}{0,1}^2 
			 &\leq \pairLt{\calJ'(\Uop)  - \calJ'(\uop), \Uop - \uop}{0,1}  .
        \end{align*}
        Adding and subtracting $\calJ_h'(\Uop)$ gives
        \begin{align*}			
                 \frac{\lambda}{4} \normLt{\Uop-\uop}{0,1}^2 
			&\leq \pairLt{\calJ'(\Uop) - \calJ_h'(\Uop) , \Uop - \uop}{0,1} \\
               &\quad\quad	+ \pairLt{ \calJ_h'(\Uop) , \Uop - \uop}{0,1} 
			          + \pairLt{ \calJ'(\uop), \uop - \Uop }{0,1} . \\
		\end{align*}
    Since $\pairLt{\calJ'(\uop),\uop-\Uop}{0,1} \le 0$, according to 
    \eqref{eq:var_ineq}, we deduce
		\begin{align*}	          
			\frac{\lambda}{4} \normLt{\Uop-\uop}{0,1}^2 &\le \pairLt{\calJ'(\Uop) - \calJ_h'(\Uop), \Uop - \uop}{0,1} \\
				  &\quad + \pairLt{\calJ_h'(\Uop), \Uop - \uop}{0,1} .
       \end{align*}
       Add and subtract $\calP_h\uop$, the $L^2$ orthogonal projection of $\uop$
       onto $\bbU_{ad}$, to get     
        \begin{align*}			
        	\frac{\lambda}{4} \|\Uop &- \uop\|_{L^2(0,1)}^2 
        		\leq \pairLt{\calJ'(\Uop) - \calJ_h'(\Uop), \Uop - \uop}{0,1} \\
				& + \pairLt{\calJ_h'(\Uop), \calP_h \uop  - \uop }{0,1} +  \pairLt{\calJ_h'(\Uop), \Uop - \calP_h \uop}{0,1}.
		\end{align*}		
		Since $\calJ_h'(\Uop) \in \bbS_h$ the middle term vanishes. In view of \eqref{eq:disc_var_ineq} and the fact that $\calP_h\uop \in \bbU_{ad}$, we deduce 
		$\pairLt{\calJ'_h\of{\Uop}, \Uop - \calP_h\uop}{0,1} \leq 0$ and 
        \begin{align*}					
          \frac{\lambda}{4} \normLt{\Uop-\uop}{0,1}^2
          &\le \pairLt{\calJ'(\Uop) - \calJ_h'(\Uop), \Uop - \uop}{0,1},
        \end{align*}           
    The explicit expressions $\calJ'(\Uop) = \lambda \Uop + s(\Uop)$ and 
    $\calJ_h'(\Uop) = \lambda \Uop + S(\Uop)$ yield
        \begin{align*}
            \frac{\lambda}{4} \normLt{\Uop-\uop}{0,1}^2 
             &\le \pairLt{s(\Uop) - S(\Uop), \Uop - \uop}{0,1} ,
        \end{align*}
    which imply the desired estimate \eqref{eq:error_est_control}.          
\end{proof}

\begin{cor}[rate of convergence]\label{cor:final_estimate}
    Let both $h_0$ and $\normSZ{v}1p\Omega$ be sufficiently 
    small. Furthermore, let $(s(\Uop),r(\Uop))$ be the solutions of the 
    continuous adjoint equation \eqref{eq:lin_rop_sop} with 
    $\del{\gamma(\Uop), y(\Uop)}$ solutions for the continuous state equation 
    \eqref{eq:lin_st_const} with control $\Uop$. Let $\del{S(\Uop),R(\Uop)}$ solve
    the discrete adjoint equation \eqref{eq:disc_adj_var_pde} with 
    \linebreak
    $\del{G(\Uop),Y(\Uop)}$ solutions for the discrete state equation
    \eqref{eq:disc_state_var_pde} with control $\Uop$.  
    If $h \le h_0$, then there is a constant $C_0 \ge 1$, depending on    
    $\normS{\gamma}2\infty{0,1}$, $\normS{y}2p\Omega$, $\normS{s}21{0,1}$, 
    $\normS{r}2q\Omega$, $\normLt{\gamma_d}{0,1}$, 
    $\normLt{y_d}\Omega$, such that 
        \begin{equation}\label{eq:final_estimate}
        \begin{aligned}
            &\normSZ{\gamma(\Uop)-G(\Uop)}1\infty{0,1} 
              + \normSZ{y(\Uop)-Y(\Uop)}1p\Omega  \\
            & \quad+ \normSZ{s(\Uop)-S(\Uop)}11{0,1} 
              + \normSZ{r(\Uop)-R(\Uop)}1q\Omega
                          + \lambda \normLt{\uop-\Uop}{0,1} \le C_0 h .
        \end{aligned}   
        \end{equation}  
\end{cor}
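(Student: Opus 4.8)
The plan is to evaluate every discretization estimate of Section~\ref{s:oc_disc_apriori} at the \emph{single} control $u = U = \Uop$, where $\Uop \in \bbU_{ad} \subset \Uad$ is the discrete optimum whose existence is guaranteed by Theorem~\ref{eq:existence_discrete_control}. The pivotal observation is that Lemmas~\ref{lem:gamma_apriori_1}--\ref{cor:s_apriori} hold for an \emph{arbitrary} admissible pair, so forcing the continuous and discrete controls to coincide makes the term $\normLt{u-U}{0,1}$ vanish identically. This decouples the state and adjoint discretization errors from the control error, so that no bootstrap or fixed-point argument is needed at this stage.

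With $u = U = \Uop$, Lemma~\ref{cor:gamma_apriori} and Lemma~\ref{lem:y_apriori} give at once
\[
\normSZ{\gamma(\Uop)-G(\Uop)}1\infty{0,1} + \normSZ{y(\Uop)-Y(\Uop)}1p\Omega \lesssim h\del{\normSZ{\gamma}2\infty{0,1} + \normSZ{y}2p\Omega},
\]
and inserting these bounds into Lemma~\ref{cor:s_apriori} and Lemma~\ref{lem:r_apriori_est} produces the matching $O(h)$ estimates for $\normSZ{s(\Uop)-S(\Uop)}11{0,1}$ and $\normSZ{r(\Uop)-R(\Uop)}1q\Omega$. The hidden constants collect the continuous-solution norms $\normS{\gamma}2\infty{0,1}$, $\normS{y}2p\Omega$, $\normS{s}21{0,1}$, $\normS{r}2q\Omega$, together with the $\delta\gamma$ and $\delta y$ contributions carried by $\normLt{\gamma_d}{0,1}$ and $\normLt{y_d}\Omega$. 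Each of these is finite because of the strong-solution theory of Section~\ref{s:high_regularity}: the state norms are controlled by Theorem~\ref{thm:second_order_state} and the adjoint norms by the second-order adjoint regularity result, both valid since $\Uop \in \Uad$ and $\normSZ v1p\Omega$ is small.

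Finally, Theorem~\ref{thm:control_apriori_estimate} supplies $\normLt{\uop-\Uop}{0,1} \le \frac{4}{\lambda}\normLt{s(\Uop)-S(\Uop)}{0,1}$; since the difference $s(\Uop)-S(\Uop)$ lies in $\sobZ11{0,1}$ and therefore vanishes at the endpoints of $(0,1)$, the one-dimensional embedding $\cembed{\sobZ11{0,1}}{\Linf{0,1}}$ bounds $\normLt{s(\Uop)-S(\Uop)}{0,1}$ by $\normSZ{s(\Uop)-S(\Uop)}11{0,1}$, which is already $O(h)$. Hence $\lambda\normLt{\uop-\Uop}{0,1} \lesssim h$, and summing the four state/adjoint bounds with this control bound, then absorbing all regularity and data norms into one constant $C_0 \ge 1$, yields \eqref{eq:final_estimate}. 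I do not expect a genuine analytical obstacle here; the subtlety lies entirely in bookkeeping the constants and in checking that the second-order Sobolev norms of $(\gamma,y,s,r)$ evaluated at the moving control $\Uop$ remain bounded by the data alone. This uniformity is precisely what the smallness of $v$ and the regularity theorems of Section~\ref{s:high_regularity} provide.
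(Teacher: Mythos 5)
Your proposal is correct and follows essentially the same route as the paper: the paper likewise sets $u = U = \Uop$ in Lemmas~\ref{lem:y_apriori}, \ref{cor:gamma_apriori}, \ref{lem:r_apriori_est} and \ref{cor:s_apriori} to get $O(h)$ bounds for the state and adjoint variables, and then combines $\normLt{s(\Uop)-S(\Uop)}{0,1}\le\normSZ{s(\Uop)-S(\Uop)}11{0,1}$ (your one-dimensional embedding observation) with Theorem~\ref{thm:control_apriori_estimate} to obtain the control estimate. The only difference is ordering (the paper treats the control bound first), which is immaterial.
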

\begin{proof}
    We combine the estimate
    \[
       \normLt{s(\Uop) - S(\Uop)}{0,1} \leq \normSZ{s(\Uop) - S(\Uop)}11{0,1},
    \]
    with \lemref{cor:s_apriori} for $u = U = \Uop$
    \[
        \normLt{s(\Uop) - S(\Uop)}{0,1} \leq C_1 h
    \]
    where the constant $C_1$ has the same dependencies as $C_0$. This together with \eqref{eq:error_est_control},
    implies the error estimate for $\normLt{\uop-\Uop}{0,1}$ in \eqref{eq:final_estimate}. For the 
    remaining estimates in \eqref{eq:final_estimate} set 
    $u = \Uop$, and $U = \Uop$ in Lemmas \ref{lem:y_apriori}, 
    \ref{cor:gamma_apriori}, \ref{lem:r_apriori_est} and 
    \ref{cor:s_apriori} to complete the proof.
\end{proof}

\begin{rem}[linear rate]
\label{rem:linear_rate}
    The first-order convergence rate of \eqref{eq:final_estimate} is 
    optimal for a piecewise-linear finite element discretization of $\del{\gamma,y,s,r}$.
    For a control $u$ in $L^2$, one might expect an increased rate of convergence. For example, 
    it would be possible to use the standard Aubin-Nitsche duality argument if we were in a traditional linear setting to obtain 
    \[
        \normLt{s(\Uop) - S(\Uop)}{0,1} \leq h^{1/2}\normSZ{s(\Uop) - S(\Uop)}11{0,1},
    \] 
which in turn would yield an optimal rate of convergence $h^{3/2}$ for $\uop-\Uop$ in the proof of Corollary~\ref{cor:final_estimate}. \HA{The duality method fails in our setting we provide an explanation below. Recall $s(\bar{U}) = s(\gamma,y,r,\bar{U})$. Adding and subtracting 
$s(\bar G,\bar Y,\bar R,\bar{U})$ and using triangle inequality leads to 
\begin{align*}
    \| s(\bar{U}) - S(\bar{U}) \|_{L^2(0,1)} 
    &\le \| s(\gamma,y,r,\bar{U}) - s(\bar G,\bar Y,\bar R,\bar{U}) \|_{L^2(0,1)} \\
    &\quad+ \| s(\bar G,\bar Y,\bar R,\bar{U}) - S(\bar{U}) \|_{L^2(0,1)} 
    = \textsf{I} + \textsf{II} . 
\end{align*}
\RHN{Using a duality argument it is possible to get 
$
    \textsf{II} \lesssim h^{3/2} .
$
}
In our case the troublemaker is \textsf{I}. Using equation \eqref{eq:lin_rop_sop}, we obtain 
\begin{align*}
    \textsf{I} &\le \| \gamma(\bar{U}) - G(\bar{U}) \|_{L^2(0,1)} + \\
               &\quad \Big\| \int_0^1 \nabla (y(\bar{U}) + v) A_1[\gamma(\bar{U})] \nabla r(\bar{U}) - 
                \int_0^1 \nabla (\bar Y + v) A_1[\bar G] \nabla \bar R \Big\|_{L^2(0,1)}  + ... 
\end{align*} 
In view of Lemma 5.2 and 5.3 it is impossible to get more than O(h) 
for} \RHN{second term.
We are thus left merely with the Sobolev embedding
$
        \normLt{s(\Uop) - S(\Uop)}{0,1} \lesssim \normSZ{s(\Uop) - S(\Uop)}11{0,1},
$
which yields the linear rate of convergence for $\uop - \Uop$.}
\end{rem}

\begin{rem}[dependence on $\kappa$]
Since $\kappa$ is the ellipticity constant for $s$, then the estimate of $u$ is inversely proportional to the surface tension coefficient $\kappa$ in view of \eqref{eq:error_est_control}.  
\end{rem}

\section{Simulations} \label{s:oc_numerics_fbp}
\HA{
In our computations we assume the cost functional $\calJ$ in \eqref{eq:lin_cost} to be independent of $y$ and $y_d$, i.e. $\mu = 0$; we thus have 
\begin{equation}\label{eq:J_used}
    \costF := \frac{1}{2}\normLt{\gamma - \gamma_d}{0,1}^2 +  \frac{\lambda}{2}\normLt{u}{0,1}^2 . 
\end{equation}     
Our goal is to compute an approximation to the optimization problem presented in \secref{eq:ocfbp} with the cost functional \eqref{eq:J_used}, the Dirichlet data $v = x_2(1-x_2)(1-2x_1)$ applied to the entire boundary of $\Omega$ and surface tension coefficient $\kappa = 1$. 
}



We discretize the state variables $\del{\gamma,y}$, the adjoint variables $\del{s,r}$ and the control 
$u$ using piecewise bi-linear finite elements. We remark that in our case the \emph{first optimize then discretize} approach is equivalent to \emph{first discretize then optimize} (see \cite[p. 160-164]{MHinze_RPinnau_MUlbrich_SUlbrich_2009a}). To solve 
the state equations we use an affine invariant Newton strategy from
\cite[NLEQ-ERR, p. 148-149]{PDeuflhard2004a} because of its local
quadratic convergence.
The weak adjoint equations \eqref{eq:lin_rop_sop}, or
\eqref{eq_adjoint} in strong form, involve the 
coupling between the 2d bulk and 1d interface. 
This seemingly complicated coupling might entail an unusual assembly 
procedure and geometric mesh restrictions to evaluate 
integrals in \eqref{int-in-x2}. This is fortunately not
the case because the matrix of the adjoint system happens to be the
transpose of the Jacobian of the state equations, according to
\lemref{L:Jacobian-state}, which is available to us from the Newton
method. The assembly can thus be done with ease.

\HA{
We use a gradient based optimization method in MATLAB$^\copyright$, \emph{fmincon}, 
to solve the optimization problem. At every iteration we need to solve the state and adjoint equations and assemble $\calJ'$. This is carried out using deal.II
finite element library \cite{WBangerth_RHartmann_GKanschat_2007a}. In order to solve the linearized state algebraic system (Newton's method) and the linear adjoint systems we use a direct solver. 
}

%

  
\RHN{
We present three examples (see Figure \ref{fig:desired_sine}). Example~1 illustrates the a priori estimates
from Corollary~\ref{cor:final_estimate}; Examples~2 and 3 deal with
unconstrained optimization problems
which lie outside our theory. Examples 1 and 2 have a smooth
$\gamma_d$ whereas Example~3 has a non-smooth target $\gamma_d$.} 
\begin{figure}[h!]
\centering
\includegraphics[width=0.22\textwidth]{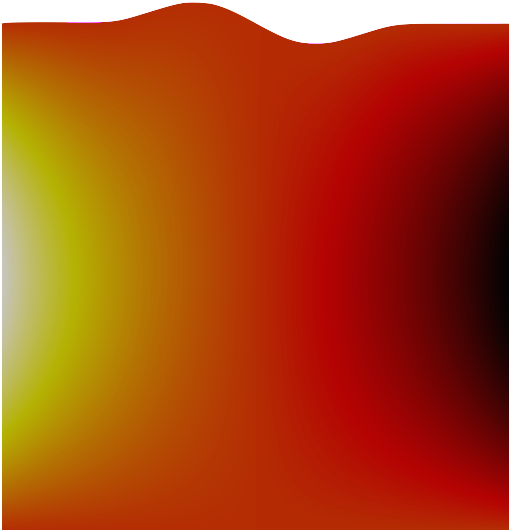}
\hskip1.cm
\includegraphics[width=0.22\textwidth]{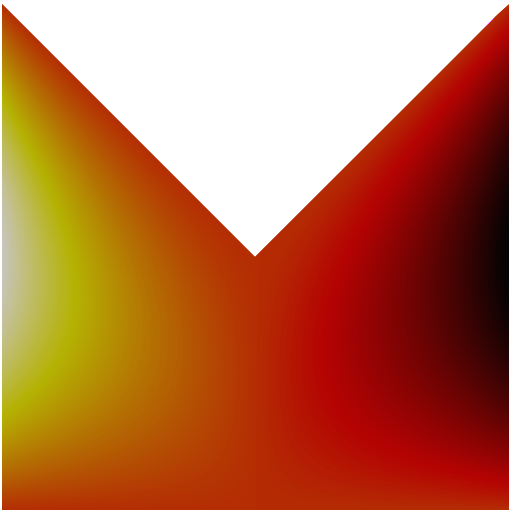}
\caption{\RHN{The desired shape of free boundary $\gamma_d = \frac{1}{16
    \pi}  \sin (2\pi x_1) - \frac{1}{16 \pi}  \sin(4 \pi x_1) +
  \frac{1}{32\pi} \sin(6 \pi x_1)$
for Examples 1 and 2 (left) and the inverted hat function $\gamma_d$
of Example 3 (right). The colors indicate the state $y$ corresponding to the 
configuration $\gamma_d$. In contrast to the former smooth profile,
the latter one $\gamma_d$ is not
achievable because the optimal solution $\bar\gamma$ satisfies
$\bar\gamma \in \sob2\infty{0,1}$.}}
\label{fig:desired_sine}
\end{figure}

\HA{We are interested in the following quantities:}
\begin{enumerate}[$\bullet$]
    \item The cost function value $\calJ\of{\bar u}$.
    \item The smallest eigenvalue of $\calJ''\of{\bar u}$, representing the 
    constant $\delta$ in the 2nd order sufficient condition $\calJ''(\uop)h^2 \ge \delta \normLt{h}{0,1}^2$. This metric is obtained in MATLAB$^\copyright$ through the approximated Hessian provided by the \emph{fmincon} function. \HA{For all three examples, we observe that this eigenvalue is close to 1. }
    \item The discrete $L^2$ norm of the optimal control $\bar u$ is equal to 
    $\del{\Uop^T M \Uop}^{1/2}$, where $M$ denotes the mass matrix corresponding
    to 1d problem in the interval $\intoo{0,1}$.
    \item \RHN{The experimental convergence rates of the state and
        optimal control variables as we uniformly refine the finite element mesh.  
    We first solve the problem on a very fine mesh, 7 uniform
    refinement cycles, and use it in place of a closed form
    solution. The latter is complicated and thus impractical
    for nonlinear optimization problems.}

\end{enumerate}

\subsection{\HA{Example 1}}\label{s:ex1}
Let $\gamma_d = \frac{1}{16 \pi}  \sin (2\pi x_1) - \frac{1}{16 \pi}
\sin(4 \pi x_1) + \frac{1}{32\pi} \sin(6 \pi x_1)$ be a smooth target
function, as depicted in \figref{fig:desired_sine}.
As $\theta_1$ in \eqref{eq:v_u_const_set} belongs to $(0,1)$, we
define the admissible control set $\Uad = \{u \in L^2(0,1) :
\normLt{u}{0,1} \le 0.9 \}$. Figure~\ref{fig:Ex1_control_rate}
displays the asymptotic rates for $\gamma$ in $W^1_\infty$ (left), $y$ 
in $W^1_p$ (middle) and $u$ in $L^2$ (right). The first two are
linear, and agree with Corollary \ref{cor:final_estimate}, whereas the
latter is quadratic and is better than predicted (see 
Corollary \ref{cor:final_estimate} and Remark \ref{rem:linear_rate}).

\begin{figure}[h!]
\centering
\includegraphics[width=0.32\textwidth]{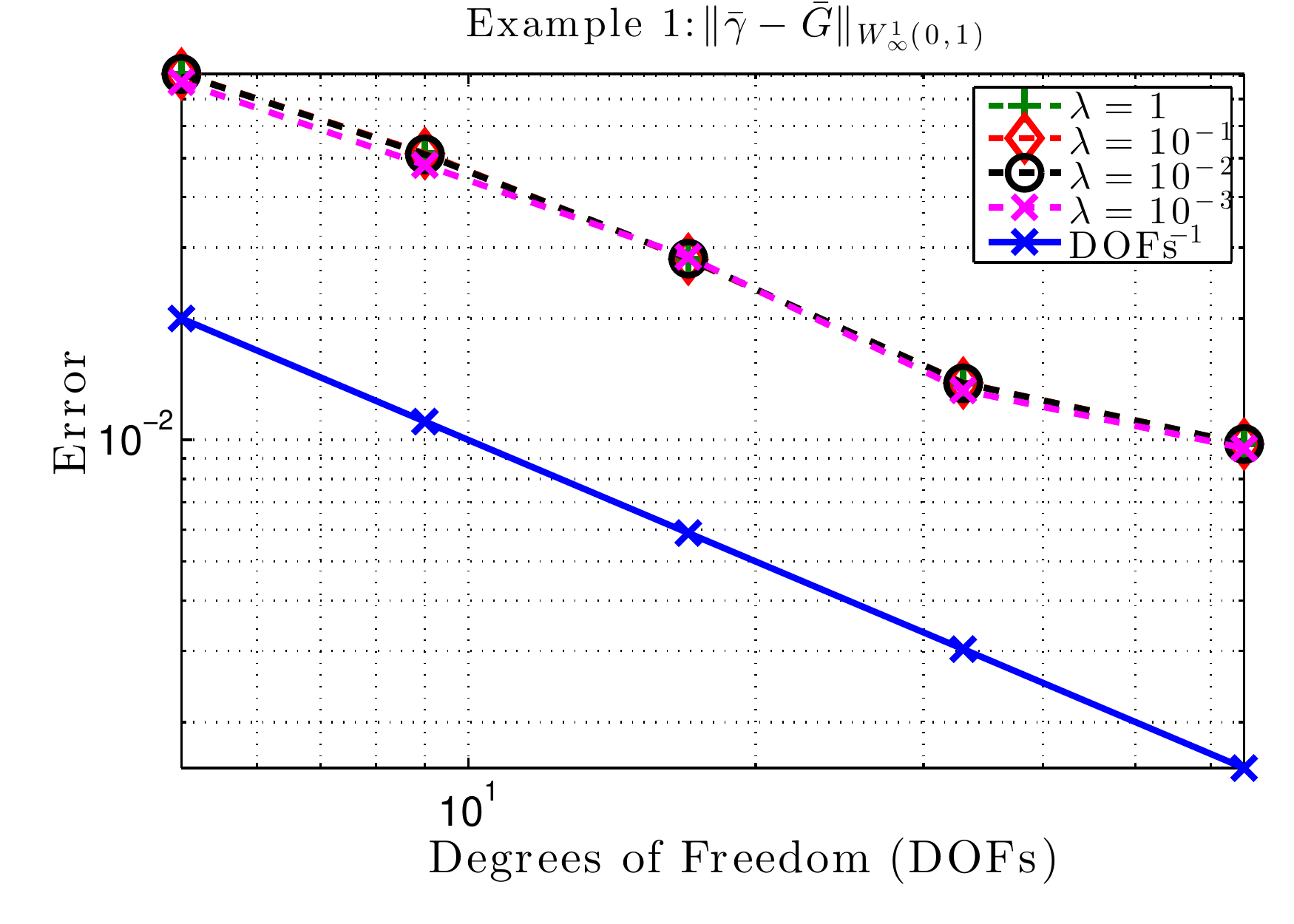}
\includegraphics[width=0.32\textwidth]{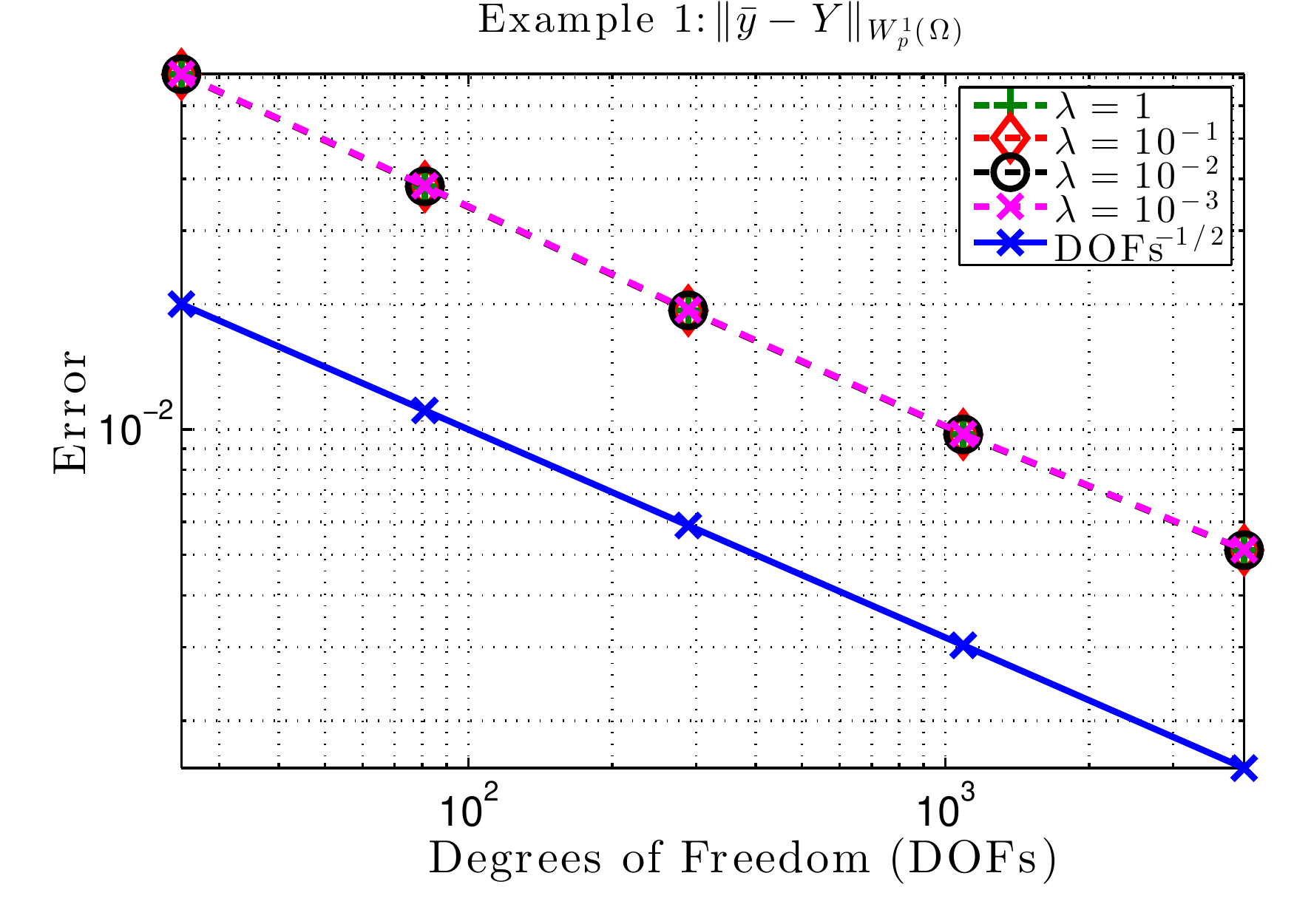}
\includegraphics[width=0.32\textwidth]{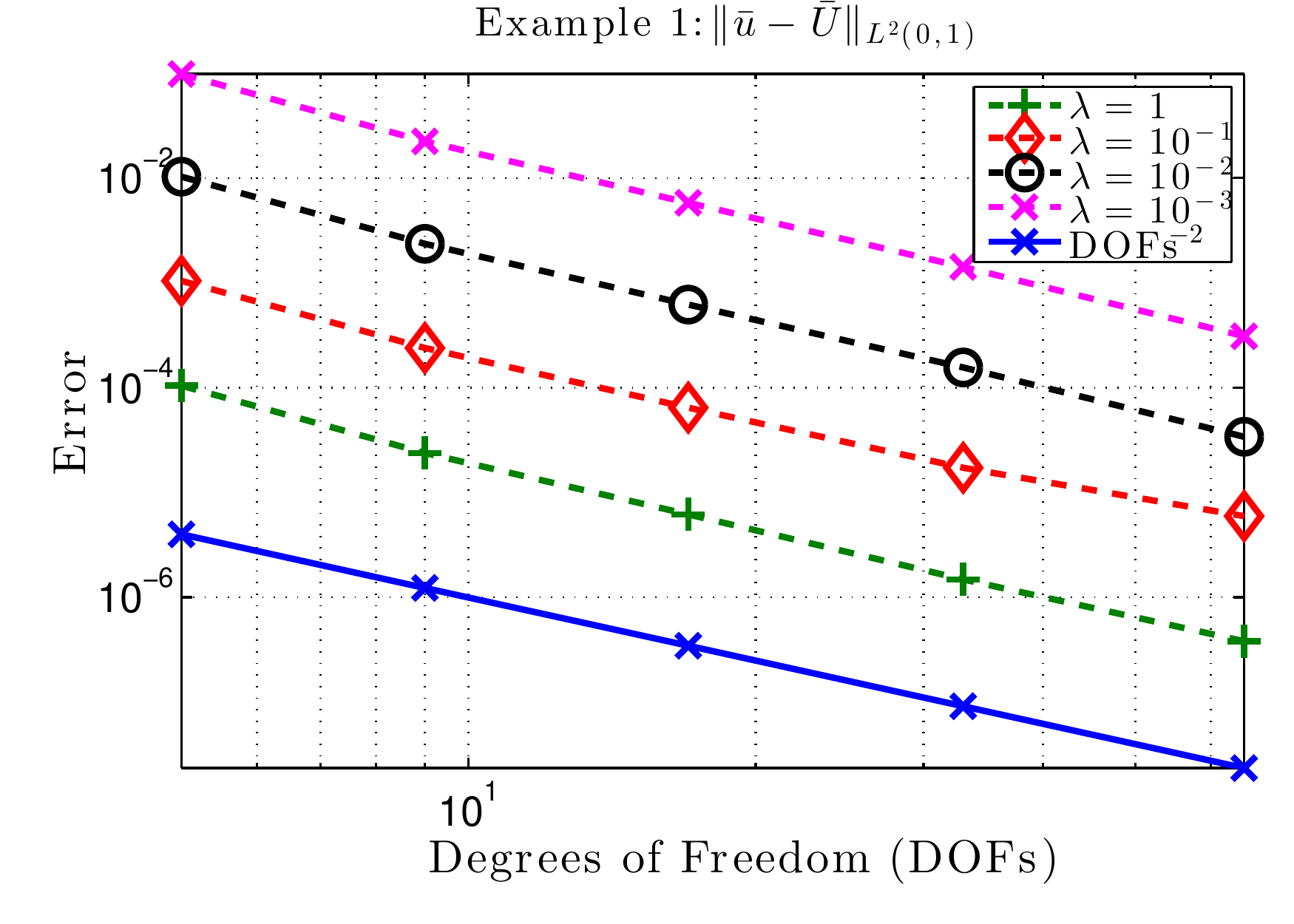}
\caption{\label{fig:Ex1_control_rate} Example 1: The left and the middle panel shows the error decay in $\sobZ1\infty{0,1}$ norm for $\bar\gamma$ and in $\sobZ1p\Omega$, $p = 2.1$ for $\bar{y}$ (dotted line) over several values of $\lambda$. The $DOFs$ are the total degree of freedom on $\Gamma$ and in $\Omega$. The right panel shows the error decay in $L^2$ norm for the optimal control.
The blue solid lines are for reference. In first two panels we observe linear rate, however for the last panel we observe quadratic rate.}
\end{figure}

\begin{figure}[h!]
\begin{minipage}[b]{0.64\linewidth}
\begin{tabular}{|cc|}
\hline
      $\del{\gop, \yop}$ 
    & $\uop$
\\ \hline     
      \multicolumn{2}{|c|}{$\lambda = 10^{-1}$, $\uop \in \intoo{-0.0027,0.0037}$}
\\ 
      \includegraphics[width=0.33\textwidth]{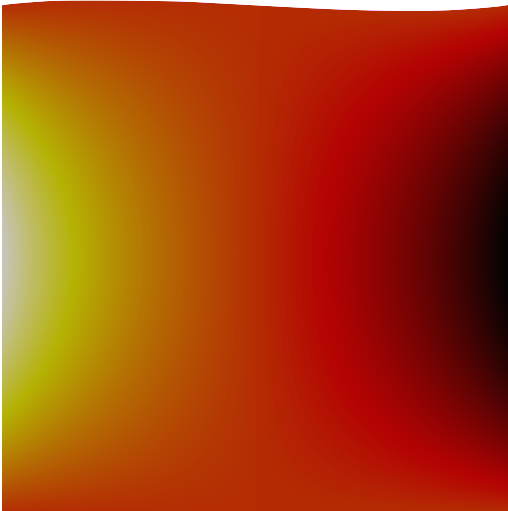}
    & \includegraphics[width=0.33\textwidth]{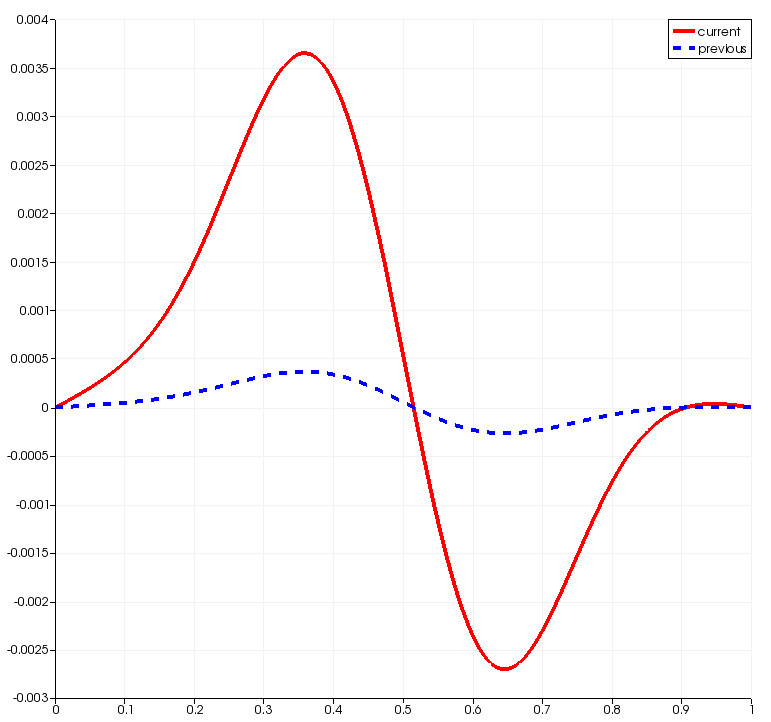}
\\ \hline
      \multicolumn{2}{|c|}{$\lambda = 10^{-2}$, $\uop \in \intoo{-0.0281,0.0335}$}
\\ 
      \includegraphics[width=0.33\textwidth]{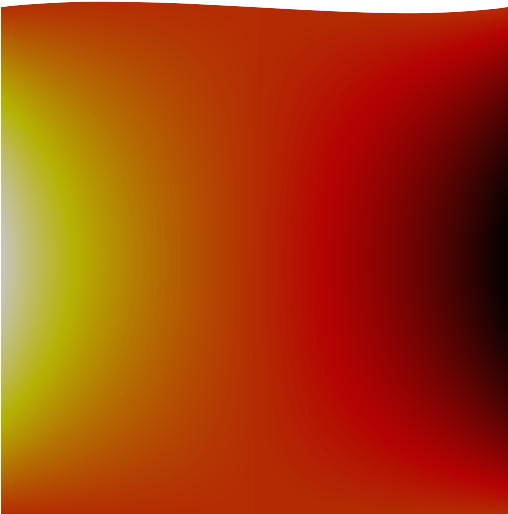}
    & \includegraphics[width=0.33\textwidth]{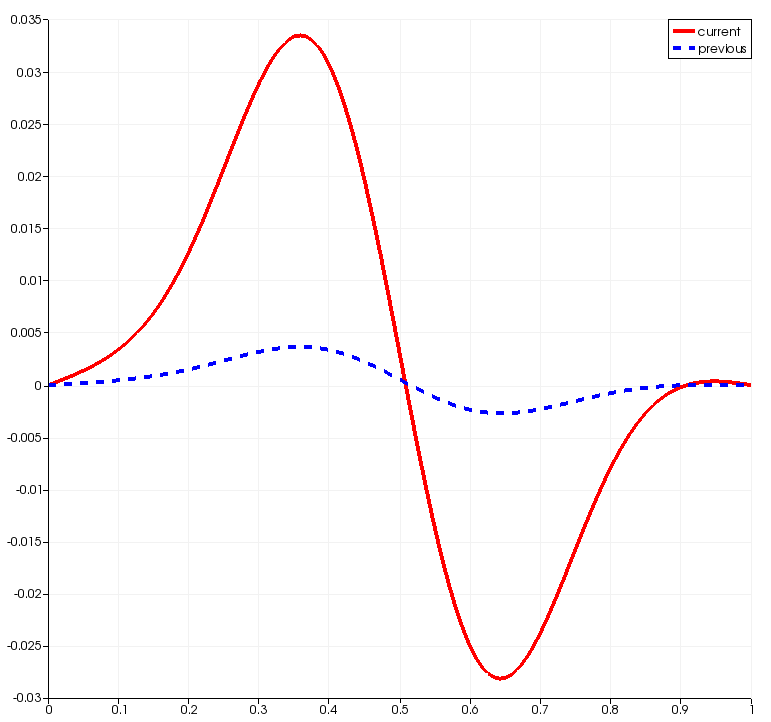}  
\\ \hline
      \multicolumn{2}{|c|}{$\lambda = 10^{-3}$, $\uop \in \intoo{-0.2424,0.2524}$}
\\ 
      \includegraphics[width=0.33\textwidth]{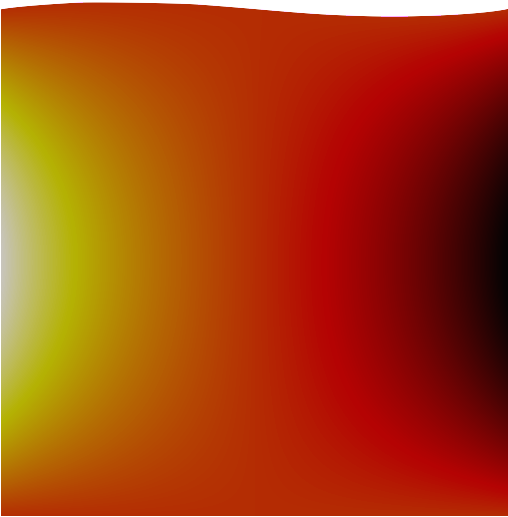}
    & \includegraphics[width=0.33\textwidth]{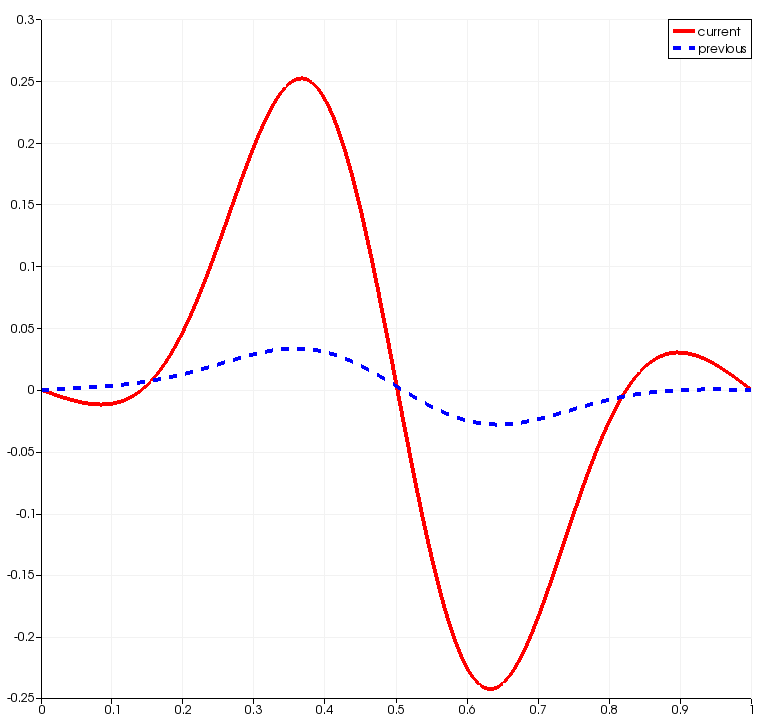}
\\ \hline
\end{tabular}
\end{minipage}
\hspace{-2.0cm}
\begin{minipage}[b]{0.64\linewidth}
\begin{tabular}{|cc|}
\hline
      $\del{\gop, \yop}$ 
    & $\uop$
\\ \hline
      \multicolumn{2}{|c|}{$\lambda = 10^{-4}$, $\uop \in \intoo{-1.3803,1.3663}$}    
\\ 
      \includegraphics[width=0.33\textwidth]{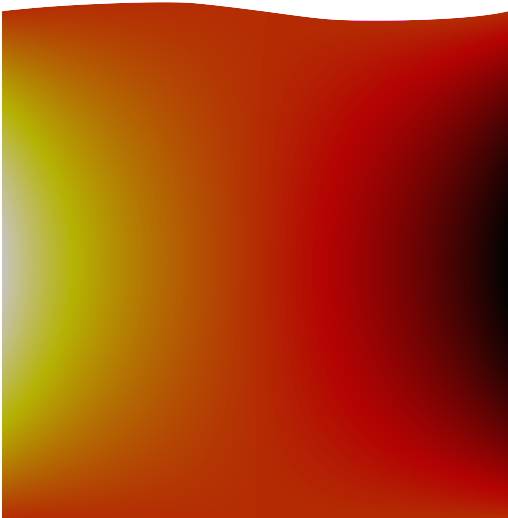}
    & \includegraphics[width=0.33\textwidth]{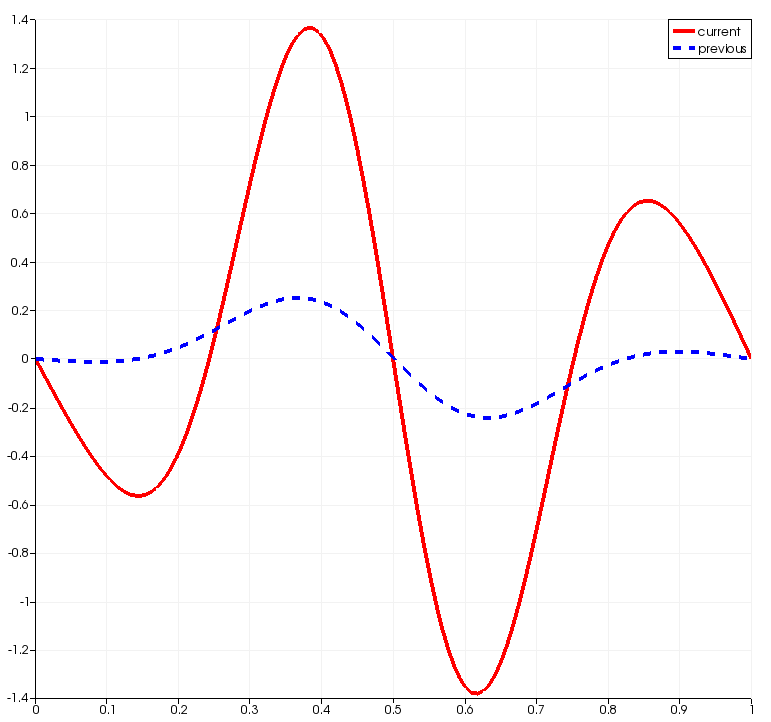}
\\ \hline
      \multicolumn{2}{|c|}{$\lambda = 10^{-5}$, $\uop \in \intoo{-1.6614,1.6472}$}    
\\ 
      \includegraphics[width=0.33\textwidth]{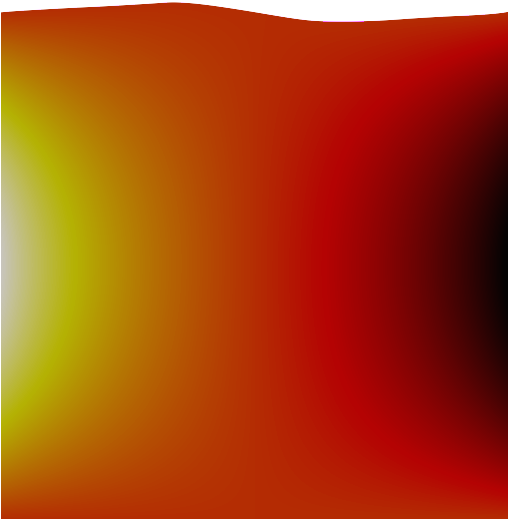}
    & \includegraphics[width=0.33\textwidth]{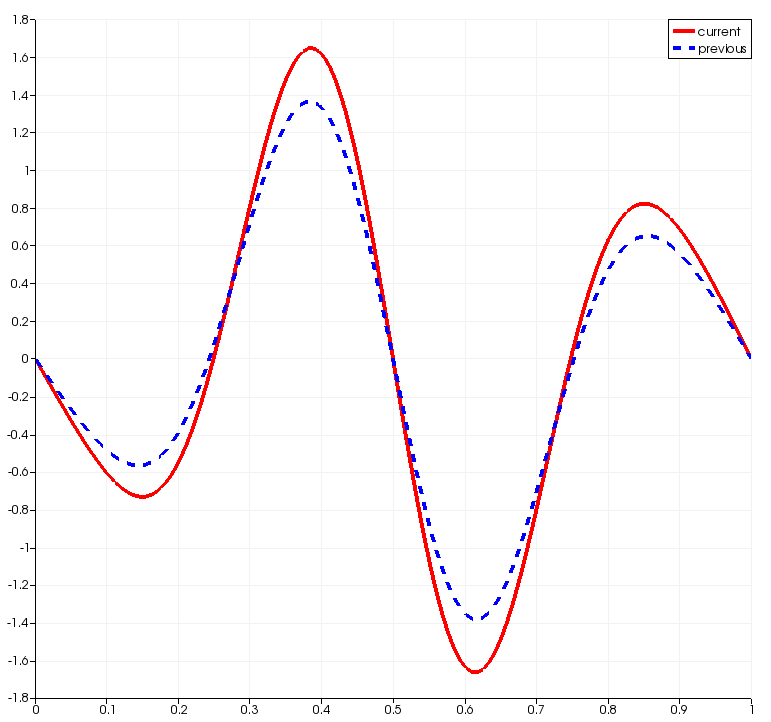}
\\ \hline
      \multicolumn{2}{|c|}{$\lambda = 10^{-6}$, $\uop \in \intoo{-1.6614,1.6472}$}        
\\ 
      \includegraphics[width=0.33\textwidth]{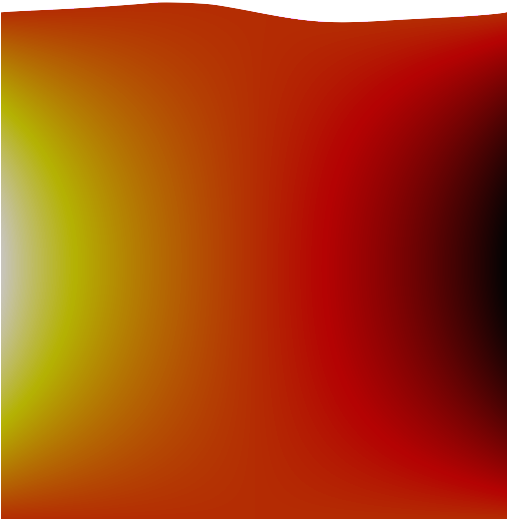}
    & \includegraphics[width=0.33\textwidth]{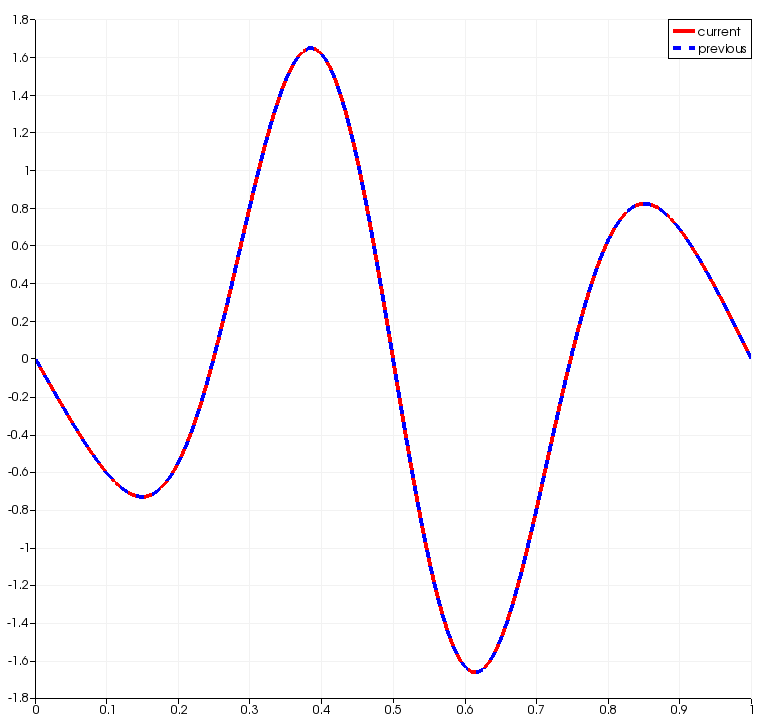}
\\ \hline
\end{tabular}
\end{minipage}
\caption{\label{fig:ex1_state_control}Example 1 ($\gamma_d$ smooth, $\uop$ constrained): The optimal state $\of{\gop, \yop}$, applied control $\uop$ in solid red, and previous control in dashed blue for comparison. The figures show the corresponding values of $\lambda$, from $\lambda =10^{-1}$ to $\lambda = 10^{-6}$, as well as the smallest and largest value of control. Notice that there is no visual difference between the optimal control for $\lambda = 10^{-5}, 10^{-6}$. This is because the control constraints are active.}
\end{figure}

Table~\ref{tab:Ex1} (left) provides the other relevant metrics. 
Finally, \figref{fig:ex1_state_control} shows the optimal state
$\del{\gop,\yop}$ and two consecutive optimal controls (solid:
current, dotted: previous) for $\lambda = 1e0$ to $\lambda=10$e-6. 
For $\lambda = 1$e-5 and 1e-6 the current and previous controls lie on 
top of each other because the constraints are active.

\begin{table}[h!]
\begin{minipage}[b]{0.50\linewidth}
\begin{center}
\begin{tabular}{|c|c|c|}\hline
$\lambda$ & $\calJ(\uop)$         & $\normLt{\uop}{0,1}$  \\ \hline 
$10^0$    & $1.59 \times 10^{-4}$ & $1.90 \times 10^{-4}$ \\
$10^{-1}$ & $1.58 \times 10^{-4}$ & $1.89 \times 10^{-3}$ \\
$10^{-2}$ & $1.57 \times 10^{-4}$ & $1.79 \times 10^{-2}$ \\
$10^{-3}$ & $1.46 \times 10^{-4}$ & $1.35 \times 10^{-1}$ \\
$10^{-4}$ & $1.07 \times 10^{-4}$ & $7.44 \times 10^{-1}$ \\
$10^{-5}$ & $7.19 \times 10^{-5}$ & $9.00 \times 10^{-1}$ \\
$10^{-6}$ & $6.83 \times 10^{-5}$ & $9.00 \times 10^{-1}$ \\ \hline
\end{tabular}
\end{center}
\end{minipage}
\begin{minipage}[b]{0.50\linewidth}
\begin{center}
\begin{tabular}{|c|c|c|}\hline
$\lambda$ & $\calJ(\uop)$	 & $\normLt{\uop}{0,1}$   \\ \hline
$10^0$    & $1.59 \times 10^{-4}$ & $1.90 \times 10^{-4}$      \\
$10^{-1}$ & $1.58 \times 10^{-4}$ & $1.89 \times 10^{-3}$ \\
$10^{-2}$ & $1.57 \times 10^{-4}$ & $1.79 \times 10^{-2}$ \\
$10^{-3}$ & $1.46 \times 10^{-4}$ & $1.35 \times 10^{-1}$ \\
$10^{-4}$ & $1.07 \times 10^{-4}$ & $7.44 \times 10^{-1}$ \\
$10^{-5}$ & $3.60 \times 10^{-5}$ & $2.21 $  \\
$10^{-6}$ & $5.37 \times 10^{-6}$ & $3.17 $  \\ \hline
\end{tabular}
\end{center}	
\end{minipage}
\caption{\label{tab:Ex1}The values of the cost function 
  $\calJ(\uop)$, and the $L^2$-norm of $\uop$ for Examples 1 (left) and Example 2 (right) as $\lambda$ varies from $1$ to $10^{-6}$.}
\vskip-0.7cm
\end{table}

\subsection{\HA{Example 2}}\label{s:ex2}
Let $\gamma_d$ be as in \secref{s:ex1}. We assume that $\Uad = L^2(0,1)$, i.e. the control is unconstrained. \figref{fig:Ex2_control_rate} shows the rate of convergence with respect to the degrees of freedom. In Table~\ref{tab:Ex1} (right) we collect the other relevant metrics.
\begin{figure}[h!]
\centering
\includegraphics[width=0.32\textwidth]{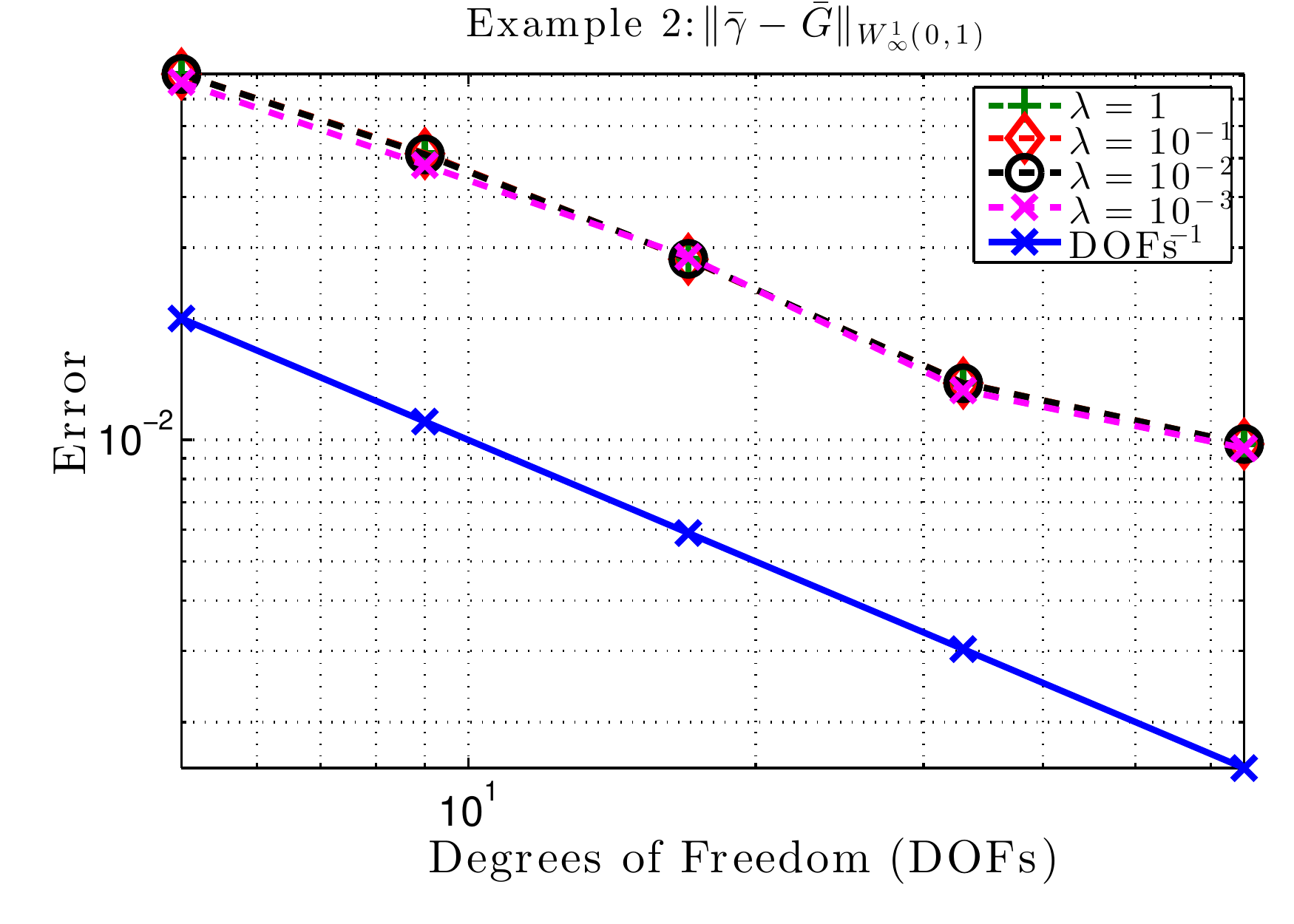}
\includegraphics[width=0.32\textwidth]{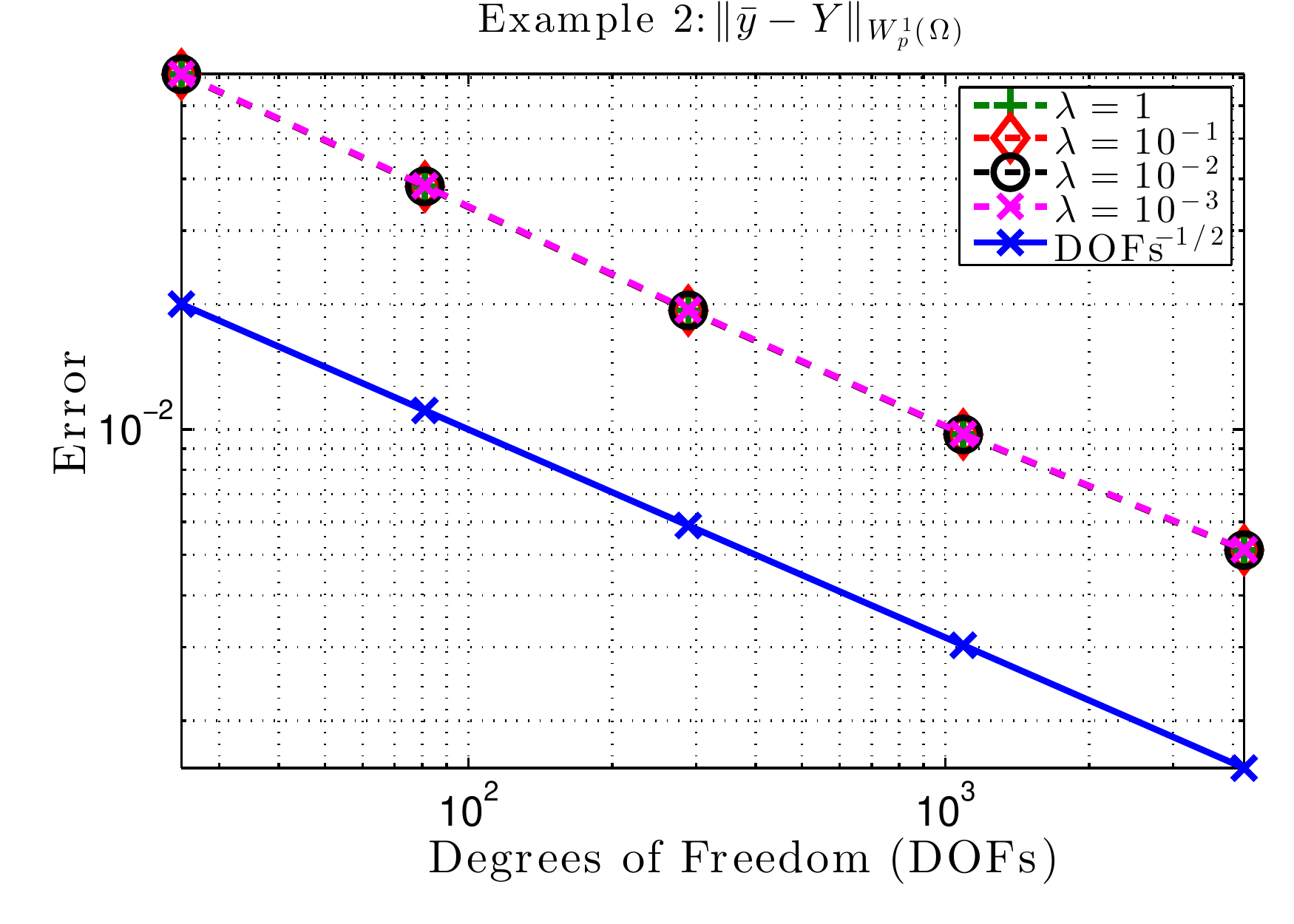}
\includegraphics[width=0.32\textwidth]{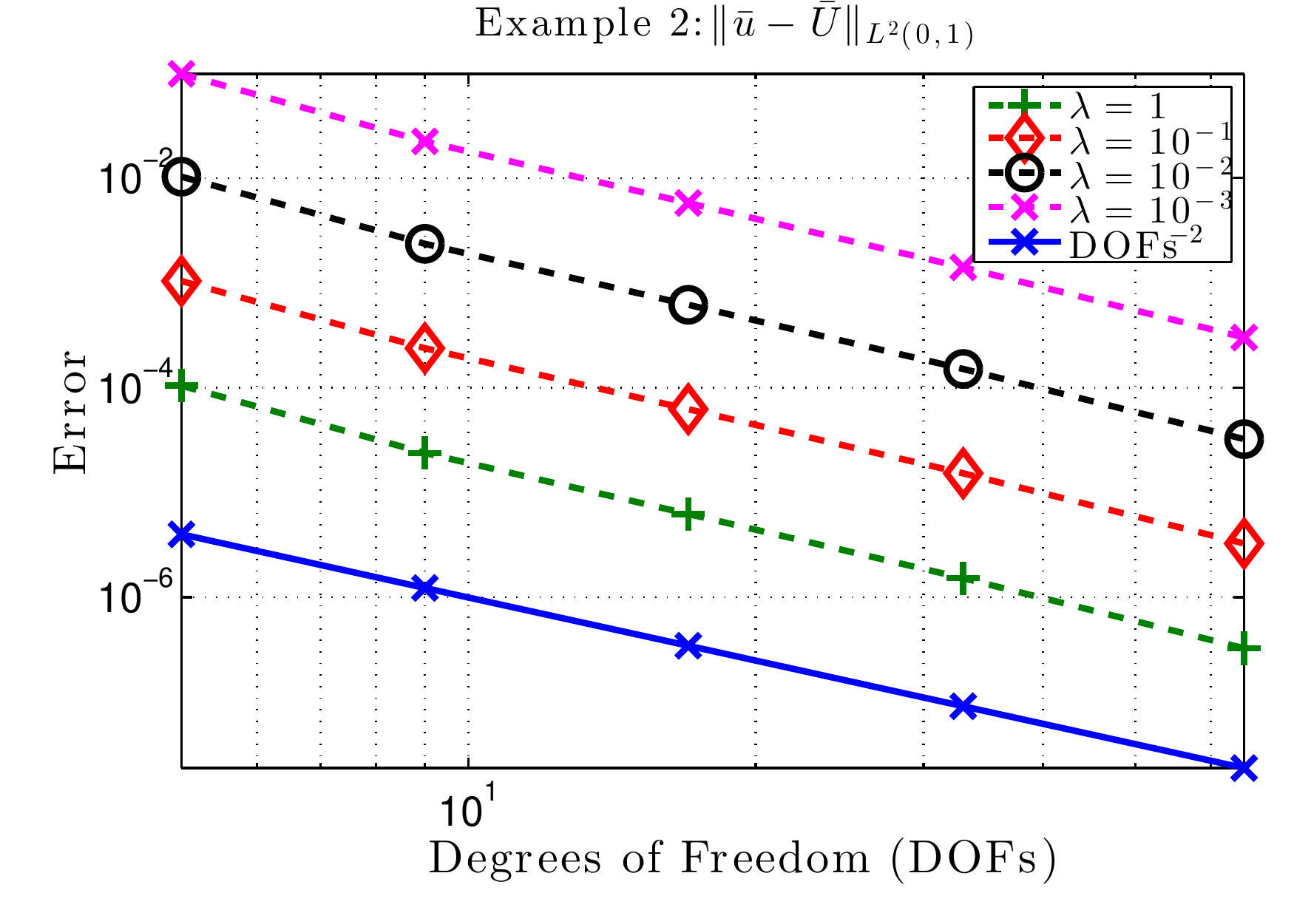}
\caption{\label{fig:Ex2_control_rate} Example 2: The left and the middle panel shows the error decay in $\sobZ1\infty{0,1}$ norm for $\bar\gamma$ and in $\sobZ1p\Omega$, $p = 2.1$ for $\bar{y}$ (dotted line) over several values of $\lambda$. The $DOFs$ are the total degree of freedom on $\Gamma$ and in $\Omega$. The right panel shows the error decay in $L^2$ norm for the optimal control.
The blue solid lines are for reference. In first two panels we observe linear rate, however for the last panel we observe quadratic rate.}
\end{figure}
%
				
The first column in \figref{fig:ex2_state_control} shows the optimal state 
$\del{\gop,\yop}$ as $\lambda$ approaches zero. The second column shows the control
$u$ applied (solid red); for reference we also plot the previous control (dotted
blue). Since the control is unconstrained and $\gamma_d$ is smooth, $\gop$ matches
  $\gamma_d$ almost perfectly. 

\begin{figure}[h!]
\begin{minipage}[b]{0.64\linewidth}
\begin{tabular}{|cc|}
\hline
      $\del{\gop, \yop}$ 
    & $\uop$
\\ \hline     
      \multicolumn{2}{|c|}{$\lambda = 10^{-1}$, $\uop \in \intoo{-0.0027,0.0036}$}
\\ 
      \includegraphics[width=0.33\textwidth]{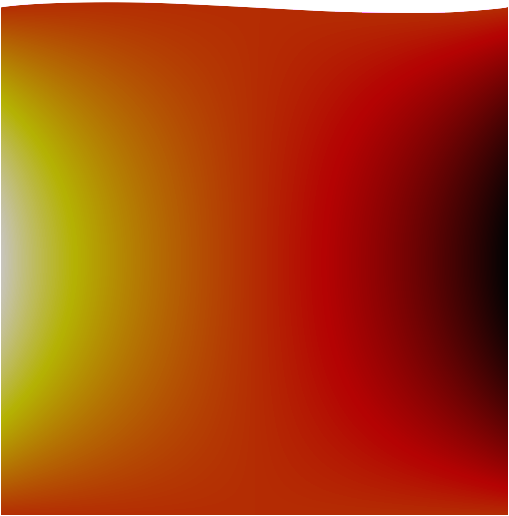}
    & \includegraphics[width=0.33\textwidth]{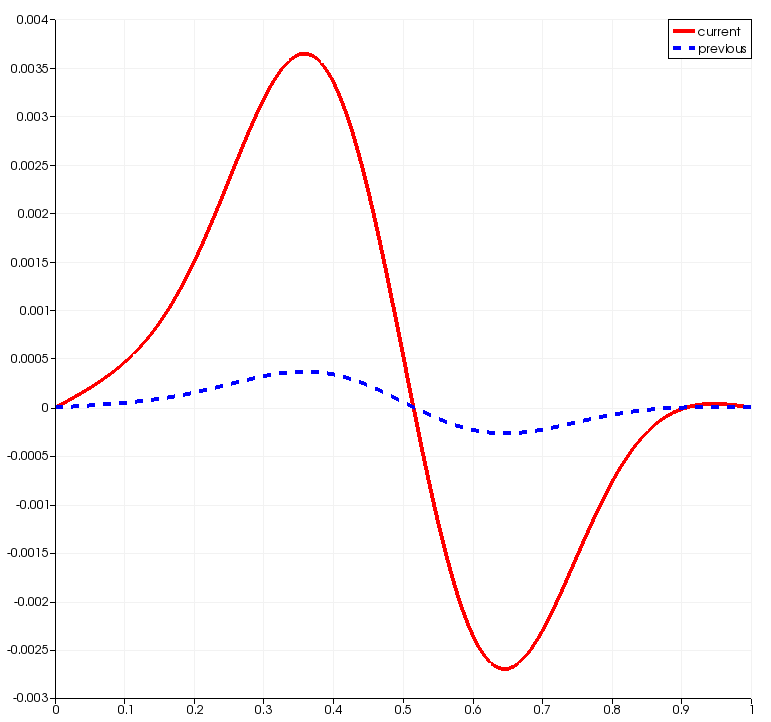}
\\ \hline
      \multicolumn{2}{|c|}{$\lambda = 10^{-2}$, $\uop \in \intoo{-0.0281,0.0335}$}
\\ 
      \includegraphics[width=0.33\textwidth]{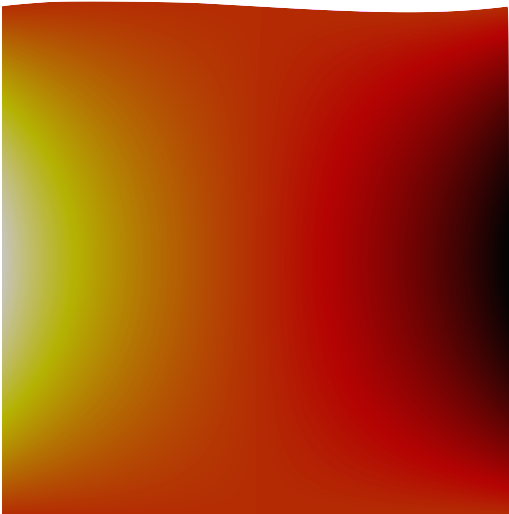}
    & \includegraphics[width=0.33\textwidth]{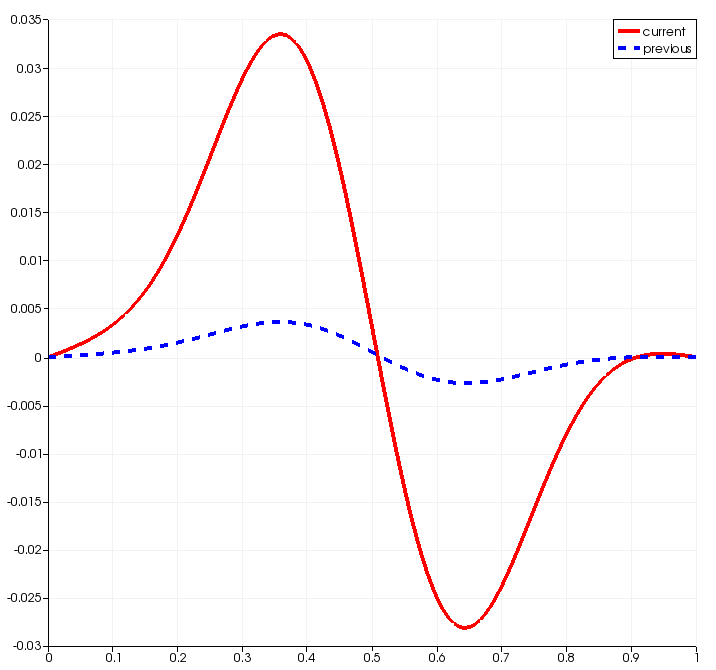}  
\\ \hline
      \multicolumn{2}{|c|}{$\lambda = 10^{-3}$, $\uop \in \intoo{-0.2424,0.2524}$}
\\ 
      \includegraphics[width=0.33\textwidth]{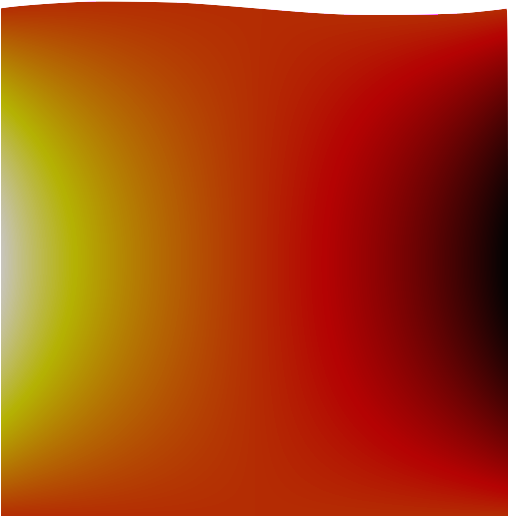}
    & \includegraphics[width=0.33\textwidth]{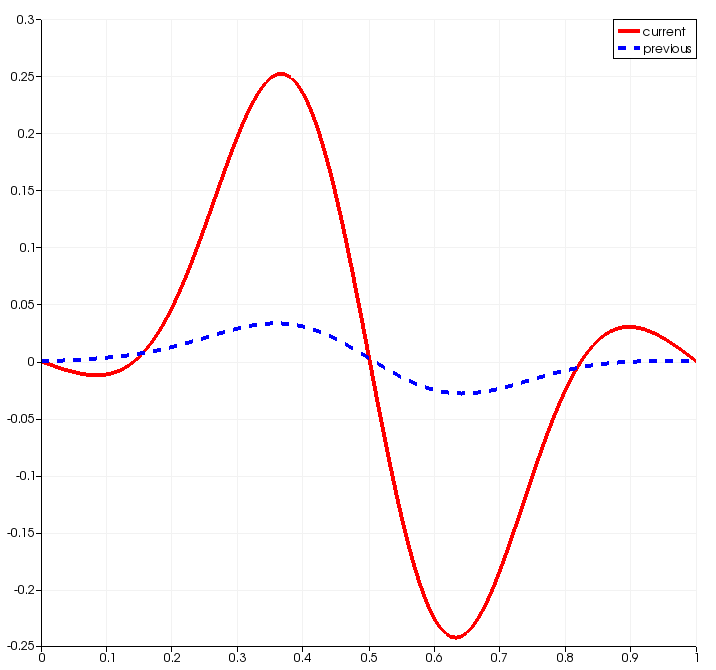}
\\ \hline
\end{tabular}
\end{minipage}
\hspace{-2.0cm}
\begin{minipage}[b]{0.64\linewidth}
\begin{tabular}{|cc|}
\hline
      $\del{\gop, \yop}$ 
    & $\uop$
\\ \hline
      \multicolumn{2}{|c|}{$\lambda = 10^{-4}$, $\uop \in \intoo{-1.3797,1.3677}$}    
\\ 
      \includegraphics[width=0.33\textwidth]{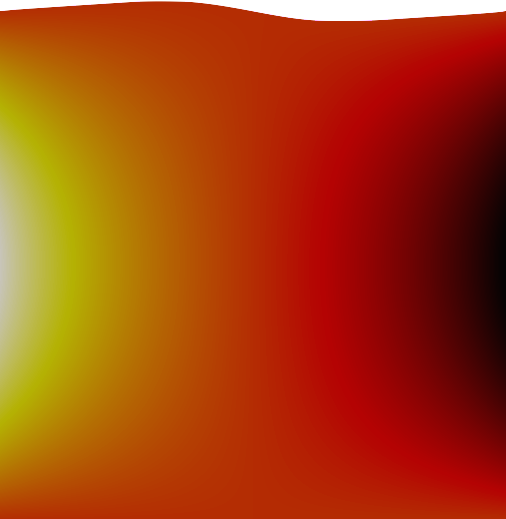}
    & \includegraphics[width=0.33\textwidth]{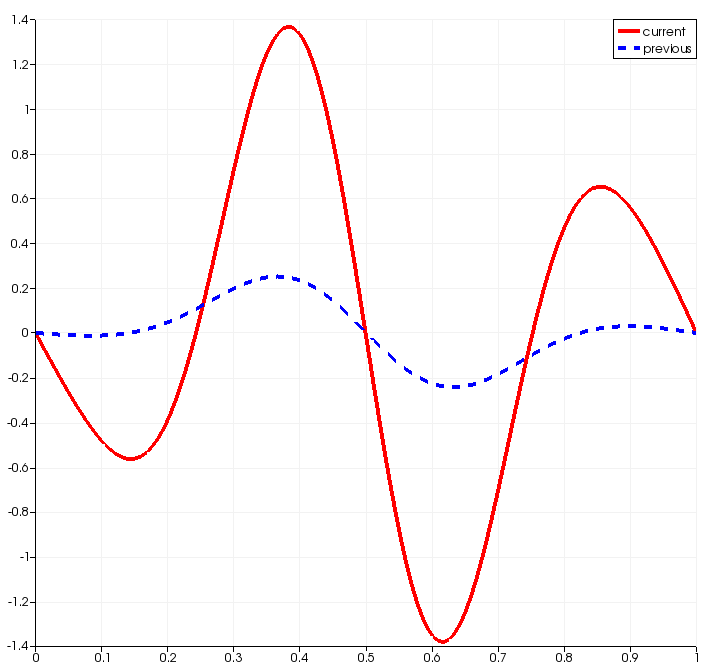}
\\ \hline
      \multicolumn{2}{|c|}{$\lambda = 10^{-5}$, $\uop \in \intoo{-4.2298,4.2350}$}    
\\ 
      \includegraphics[width=0.33\textwidth]{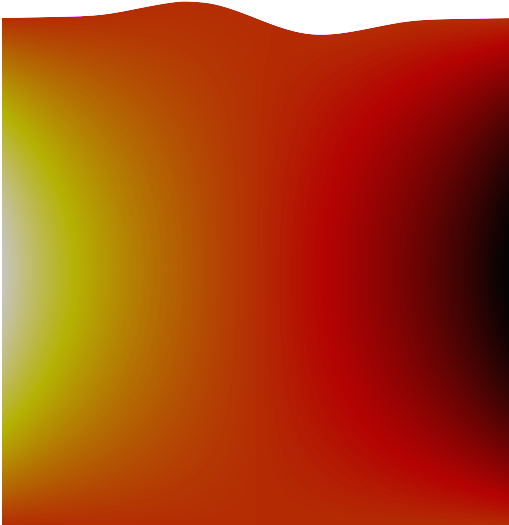}
    & \includegraphics[width=0.33\textwidth]{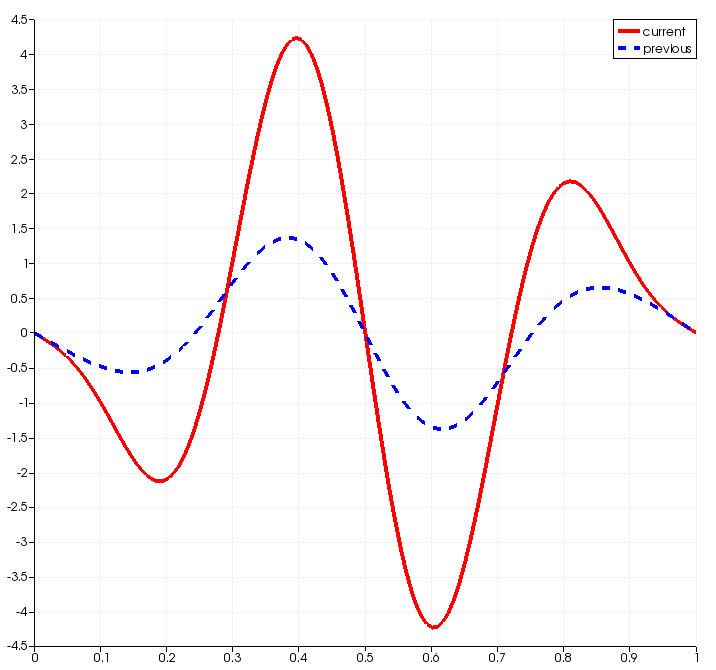}
\\ \hline
      \multicolumn{2}{|c|}{$\lambda = 10^{-6}$, $\uop \in \intoo{-6.2378,6.2829}$}        
\\ 
      \includegraphics[width=0.33\textwidth]{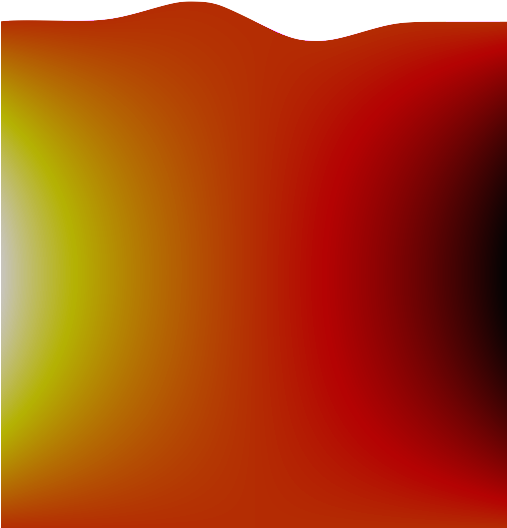}
    & \includegraphics[width=0.33\textwidth]{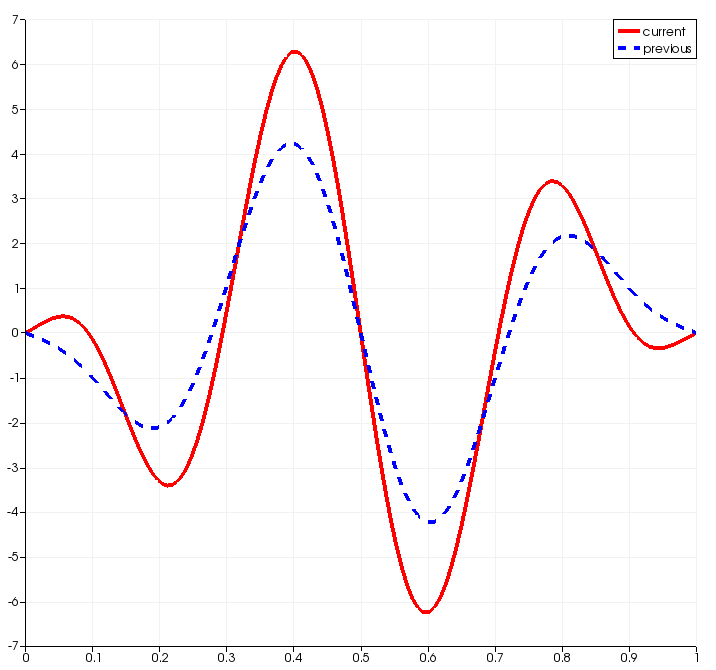}
\\ \hline
\end{tabular}
\end{minipage}
\caption{\label{fig:ex2_state_control}Example 2 ($\gamma_d$
    smooth, $\uop$ unconstrained): The optimal state $\of{\gop,
      \yop}$, applied control $\uop$ in solid red, and previous
    control in dashed blue for comparison. The figures show the
  corresponding value of $\lambda$, from $\lambda =10^{-1}$ to
  $\lambda = 10^{-6}$, as well as the smallest and largest value of
  control $u$. As $u$ is unconstrained and $\gamma_d$
    smooth, the matching of $\bar\gamma$ and $\gamma_d$ is almost
  perfect for small values of $\lambda$.}
\end{figure}

\subsection{\HA{Example 3}}\label{s:ex3}
Let the desired configuration $\gamma_d$ be an inverted hat function (see \figref{fig:desired_sine}). As $\gamma$ satisfies the state equations \eqref{eq:state_eqn}, and the second-order regularity $\gamma \in \sob2\infty{0,1}$ (see \thmref{thm:second_order_state}), the profile $\gamma_d$ is not achievable. 
In Table~\ref{tab:Ex3} we collect the several relevant metrics.
\begin{table}[h!]
\centering
\begin{tabular}{|c|c|c|}\hline
$\lambda$ & $\calJ(\uop)$         & $\normLt{\uop}{0,1}$  \\ \hline 
$10^0$    & $4.11 \times 10^{-2}$ & $2.88 \times 10^{-2}$ \\
$10^{-1}$ & $3.77 \times 10^{-2}$ & $2.64 \times 10^{-1}$ \\
$10^{-2}$ & $2.08 \times 10^{-2}$ & $1.43 $  \\
$10^{-3}$ & $4.36 \times 10^{-3}$ & $2.60 $  \\
$10^{-4}$ & $7.52 \times 10^{-4}$ & $3.37 $  \\
$10^{-5}$ & $1.31 \times 10^{-4}$ & $4.46 $  \\
$10^{-6}$ & $2.30 \times 10^{-5}$ & $5.89 $  \\ \hline
\end{tabular}
\caption{\label{tab:Ex3}Example 3: The values of the cost function
  $\calJ(\uop)$, the $L^2$-norm of $\uop$, and the smallest eigenvalue of $\calJ''(\uop)$, as $\lambda$ varies from $1$ to $10^{-6}$.}
\vskip-0.8cm
\end{table}

The first column in \figref{fig:ex1_state_control} shows the optimal state 
$\del{\gop,\yop}$ as $\lambda$ approaches zero. The second column shows the control
applied (solid red); for reference we also plot the previous control (dotted
blue). For $\lambda=10^{-1}$ to $\lambda=10^{-3}$ one can see that the control acts at the center 
and tries to move $\gamma$ towards $\gamma_d$. For $\lambda=10^{-4}$ the control 
needs to push $\gamma$ in the right-half up, and in the left-half down and therefore it
adjusts accordingly. For $\lambda=10^{-6}$ the control again mostly
acts at the center. Moreover $\gamma$ matches $\gamma_d$ well but not
exactly because $\bar\gamma\in W^2_\infty(0,1)$.

\begin{figure}[h!]
\begin{minipage}[b]{0.64\linewidth}
\begin{tabular}{|c|c|}
\hline
      $\del{\gop, \yop}$ 
    & $\uop$
\\ \hline    
      \multicolumn{2}{|c|}{$\lambda = 10^{-1}$, $\uop \in \intoo{-0.3784,-0.0087}$}
\\ 
      \includegraphics[width=0.33\textwidth]{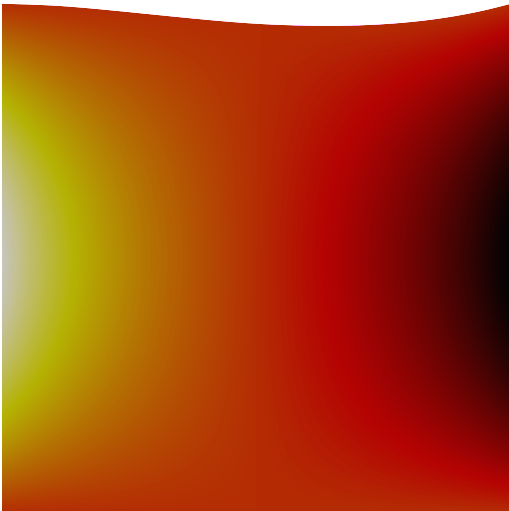}
    & \includegraphics[width=0.33\textwidth]{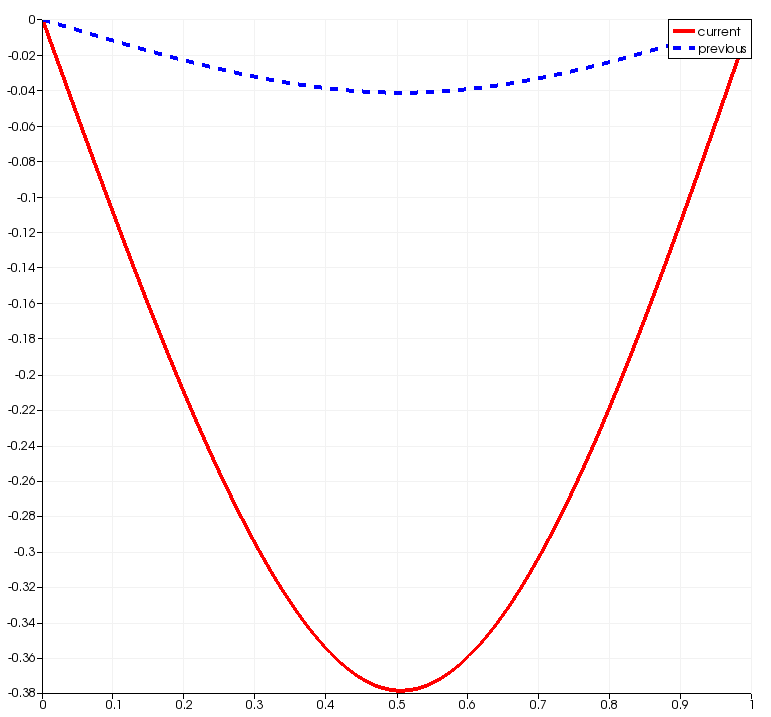}
\\ \hline
      \multicolumn{2}{|c|}{$\lambda = 10^{-2}$, $\uop \in \intoo{-2.0737,-0.0454}$}
\\ 
      \includegraphics[width=0.33\textwidth]{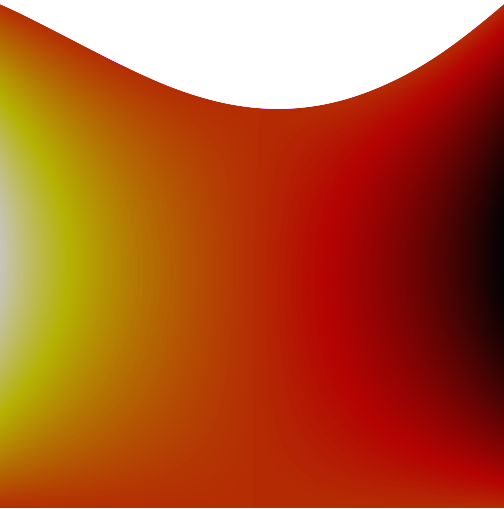}
    & \includegraphics[width=0.33\textwidth]{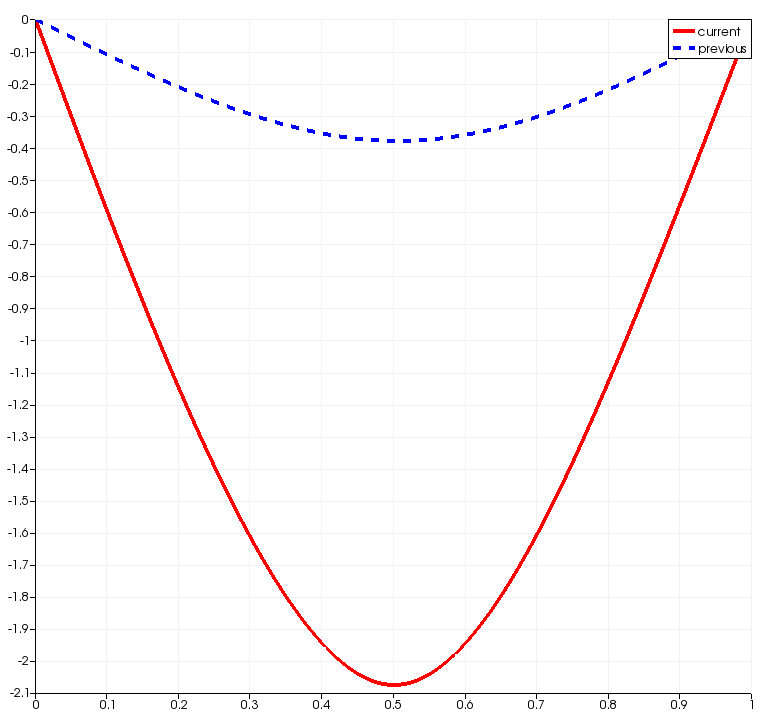}  
\\ \hline
      \multicolumn{2}{|c|}{$\lambda = 10^{-3}$, $\uop \in \intoo{-4.2068,-0.0297}$}
\\ 
      \includegraphics[width=0.33\textwidth]{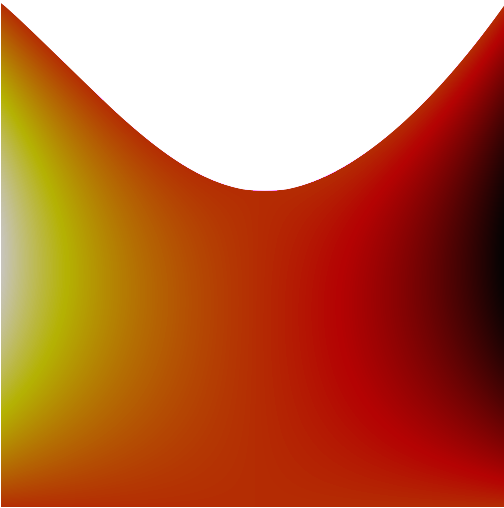}
    & \includegraphics[width=0.33\textwidth]{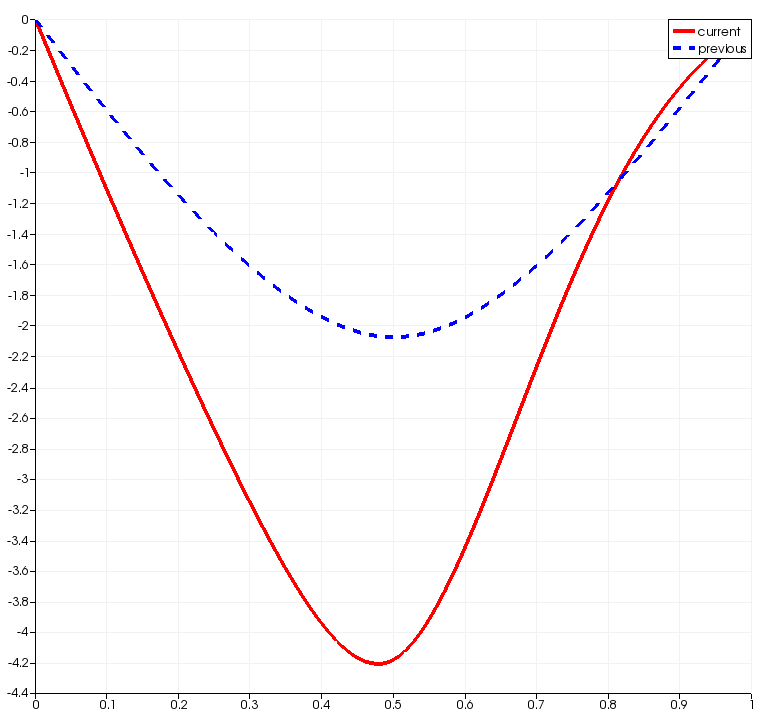}
\\ \hline
\end{tabular}
\end{minipage}
\hspace{-2.0cm}
\begin{minipage}[b]{0.64\linewidth}
\begin{tabular}{|c|c|}
\hline
      $\del{\gop, \yop}$ 
    & $\uop$
\\ \hline
      \multicolumn{2}{|c|}{$\lambda = 10^{-4}$, $\uop \in \intoo{-7.1013,1.2484}$}    
\\ 
      \includegraphics[width=0.33\textwidth]{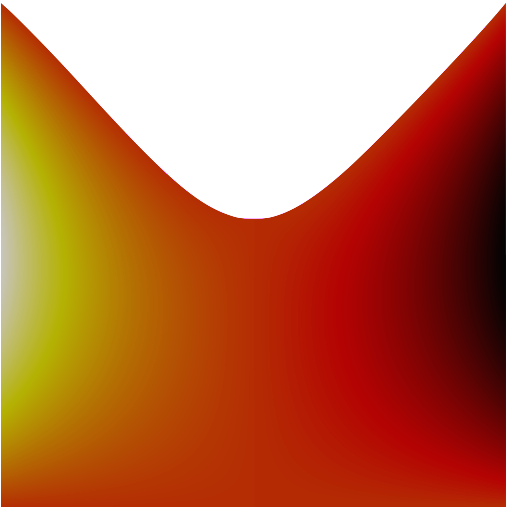}
    & \includegraphics[width=0.33\textwidth]{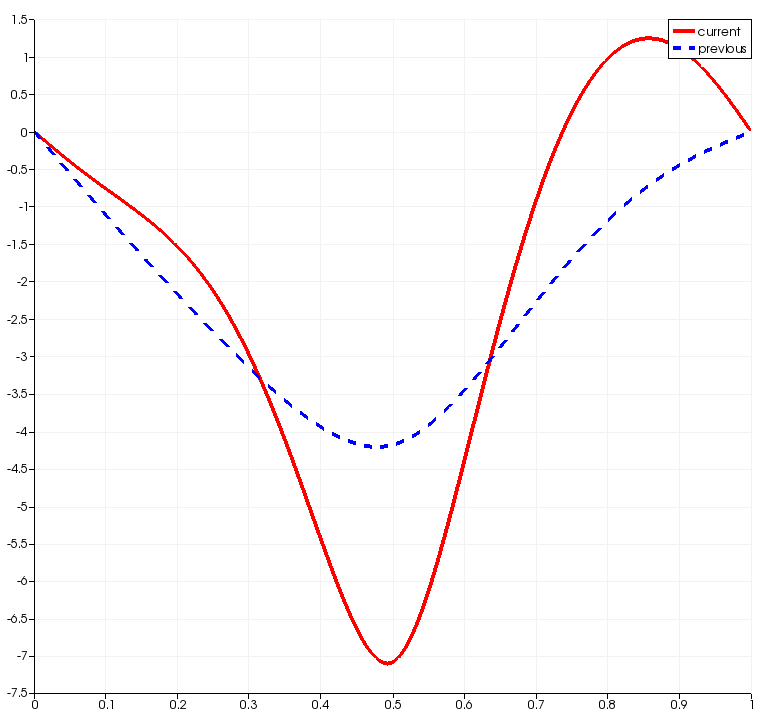}
\\ \hline
      \multicolumn{2}{|c|}{$\lambda = 10^{-5}$, $\uop \in \intoo{-12.5864,1.7676}$}    
\\ 
      \includegraphics[width=0.33\textwidth]{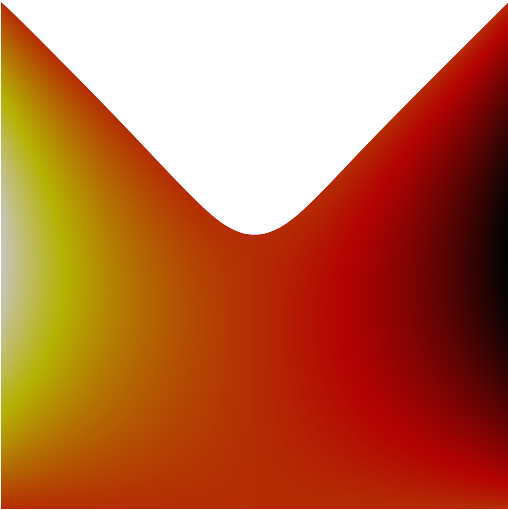}
    & \includegraphics[width=0.33\textwidth]{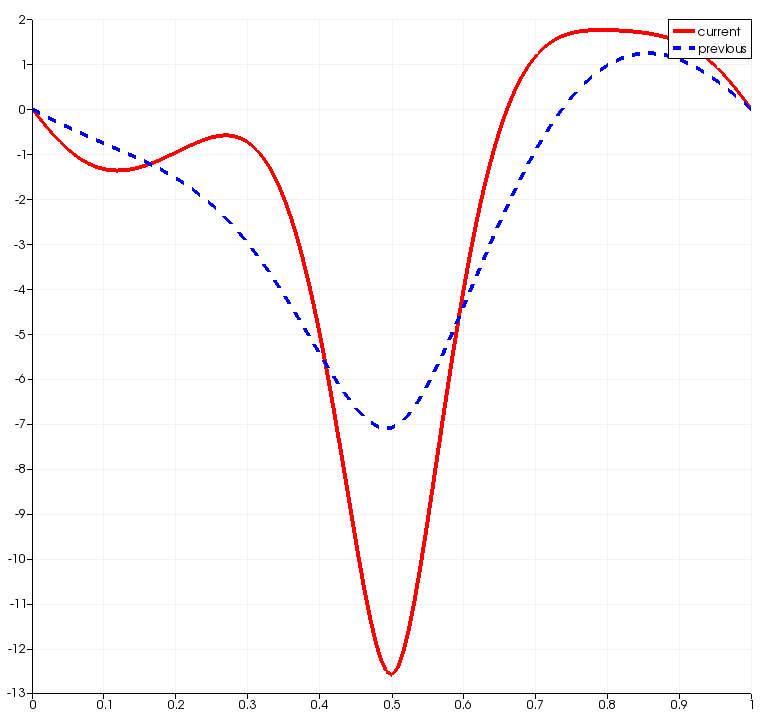}
\\ \hline
      \multicolumn{2}{|c|}{$\lambda = 10^{-6}$, $\uop \in \intoo{-22.4179,1.7714}$}        
\\ 
      \includegraphics[width=0.33\textwidth]{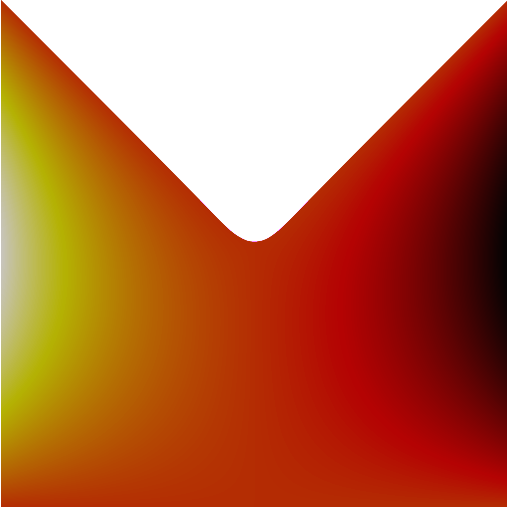}
    & \includegraphics[width=0.33\textwidth]{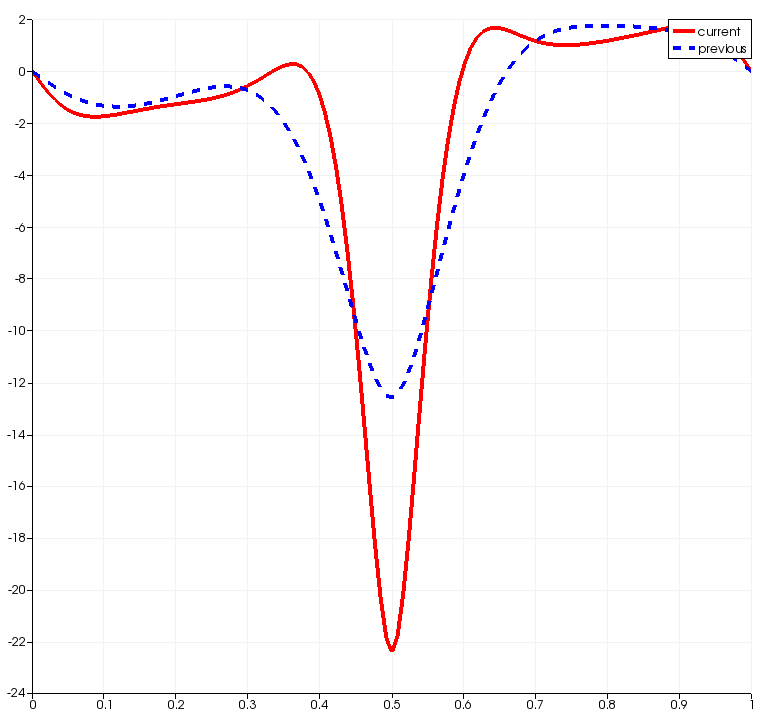}
\\ \hline
\end{tabular}
\end{minipage}
\caption{\label{fig:ex3_state_control} Example 3 ($\gamma_d$ rough,
  $\uop$ unconstrained): The optimal state $\of{\gop, \yop}$,
  applied control $\uop$ in solid red, and previous control in
  dashed blue for comparison. The figures show the corresponding value
  of $\lambda$, from $\lambda =10^{-1}$ to $\lambda = 10^{-6}$, as
  well as the smallest and largest value of control. Since $\gamma_d$
  is rough, we cannot expect a perfect match of $\bar\gamma$ and
  $\gamma_d$ even for unconstrained control.}
\end{figure}

\section*{Acknowledgments}
We thank Abner Salgado for his constant support on deal.II. \HA{We are 
grateful to the referees for their insightful comments and suggestions.}

{\small
  \bibliographystyle{siam}
  \bibliography{references}
}

\end{document}